\setlist{nosep}
\newcommand{\Bcal}{\mathcal{B}}
\newcommand{\Ccal}{\mathcal{C}}
\newcommand{\Dcal}{\mathcal{D}}
\newcommand{\Ecal}{\mathcal{E}}
\newcommand{\Fcal}{\mathcal{F}}
\newcommand{\Scal}{\mathcal{S}}
\newcommand{\Nbb}{\mathbb{N}}
\newcommand{\Qbb}{\mathbb{Q}}
\newcommand{\Rbb}{\mathbb{R}}
\newcommand{\Zbb}{\mathbb{Z}}
\newcommand{\yon}{\mathbf{y}}
\newcommand{\Set}{\mathbf{Set}}
\newcommand{\Setswith}[1]{\PSh(#1)}
\newcommand{\Hom}{\mathrm{Hom}}
\newcommand{\op}{^{\mathrm{op}}}
\newcommand{\ns}{\mathrm{ns}}
\newcommand{\lr}[2]{( #1, #2 )}
\newcommand{\fllr}[2]{[ #1, #2 )}
\newcommand{\too}{\twoheadrightarrow}
\newcommand{\et}{\mathrm{\acute{e}t}}
\DeclareMathOperator{\ob}{ob}
\DeclareMathOperator{\id}{id}
\DeclareMathOperator{\Fix}{Fix}
\DeclareMathOperator{\PSh}{\mathbf{PSh}}
\DeclareMathOperator{\Sh}{\mathbf{Sh}}
\DeclareMathOperator{\Conn}{Conn}
\DeclareMathOperator{\End}{End}
\tikzset{
  no line/.style={draw=none,
    commutative diagrams/every label/.append style={/tikz/auto=false}},
  from/.style args={#1 to #2}{to path={(#1)--(#2)\tikztonodes}}
	}
\tikzset{symbol/.style={draw=none, every to/.append style={edge node = {node [sloped, allow upside down, auto=false] {$#1$}}}}}
\newtheorem{thm}{Theorem}[section]
\newtheorem{theorem}[thm]{Theorem}
\newtheorem{prop}[thm]{Proposition}
\newtheorem{proposition}[thm]{Proposition}
\newtheorem{lemma}[thm]{Lemma}
\newtheorem{crly}[thm]{Corollary}
\newtheorem{corollary}[thm]{Corollary}
\newtheorem{scholium}[thm]{Scholium}
\theoremstyle{definition}
\newtheorem{dfn}[thm]{Definition}
\newtheorem{definition}[thm]{Definition}
\newtheorem{example}[thm]{Example}
\theoremstyle{remark}
\newtheorem{rmk}[thm]{Remark}
\newtheorem{remark}[thm]{Remark}
\date{}
   \def\MR#1{}
\let\theta\vartheta
\title{Geometric morphisms between toposes of monoid actions: factorization systems}
\author{Jens Hemelaer \thanks{Department of Mathematics, University of Antwerp, 
 Middelheimlaan 1, B-2020 Antwerp (Belgium) \\ email: jens.hemelaer@uantwerpen.be} \\ Morgan Rogers \thanks{Universit\'e Sorbonne Paris Nord, LIPN - UMR7030 CNRS,\\
 99 Avenue Jean-Baptiste Cl\'ement, 93430, Villetaneuse \\ email: rogers@lipn.univ-paris13.fr}}
\begin{document}

\maketitle

\abstract{Let $M,\,N$ be monoids, and $\Setswith{M},\,\Setswith{N}$ their respective categories of right actions on sets. In this paper, we systematically investigate correspondences between properties of geometric morphisms $\PSh(M) \to \PSh(N)$ and properties of the semigroup homomorphisms $M \to N$ or flat-left-$N$-right-$M$-sets inducing them. More specifically, we consider properties of geometric morphisms featuring in factorization systems, namely: surjections, inclusions, localic morphisms, hyperconnected morphisms, terminal-connected morphisms, \'etale morphisms, pure morphisms and complete spreads. We end with an application to topos-theoretic Galois theory to the special case of toposes of the form $\PSh(M)$.}

\tableofcontents

\section{Introduction}
\label{sec:intro}

This article is part of our ongoing project in which we study toposes of presheaves $\PSh(M)$ with $M$ a monoid, and the geometric morphisms between these toposes. A topos of this form appeared in the construction of the Arithmetic Site of Connes and Consani \cite{connes-consani}, in the special case where $M$ is the monoid of nonzero natural numbers under multiplication. Variations on the Arithmetic Site, with different choices of monoid $M$, were considered in \cite{sagnier}, \cite{arithmtop} and \cite{llb-three}. Further, if we think of a commutative monoid $M$ as dual to an ``affine $\mathbb{F}_1$-scheme'', as in \cite{manin}, then $\PSh(M)$ can be seen as a category of quasi-coherent modules on such an $\mathbb{F}_1$-scheme, see \cite{pirashvili}. 

In semigroup theory, studying the topos $\PSh(M)$ can give a helpful alternative point of view: in \cite{hr1} it was demonstrated that various known facts from semigroup theory have natural topos-theoretic interpretations. In \cite{hr-fitzgerald} a problem in semigroup theory was solved by the present authors with the help of topos-theoretic language; conversely, in \cite{not-locally-connected} a geometric morphism between toposes of this form provided a counterexample to an open question in topos theory.

In \cite{hr1}, we restricted our attention to the study of the global section geometric morphism $\PSh(M) \to \Set$. This time, we will look at more general geometric morphisms $\PSh(M) \to \PSh(N)$, with $M$ and $N$ monoids. Recall that in \cite{TDMA}, the second named author presented a 2-categorical equivalence between a 2-category of discrete monoids and a 2-category whose objects are their (presheaf) toposes of right actions, whereby essential geometric morphisms between the toposes correspond to semigroup homomorphisms between the monoids; the global sections morphism of $\PSh(M)$ corresponds to the unique semigroup homomorphism $M \to 1$, for example. As explained in \cite{hr1}, general geometric morphisms $\Setswith{M} \to \Setswith{N}$ correspond to sets equipped with a flat left $N$-action and a compatible right $M$-action. In this paper, we refer to these as $\fllr{N}{M}$-sets; see Definition \ref{dfn:fllr}. A natural next step in studying toposes of discrete monoid actions is an investigation of how properties of geometric morphisms descend to properties of the corresponding semigroup homomorphisms or $\fllr{N}{M}$-sets. Since properties of geometric morphisms are far too varied to examine exhaustively in a single article, we focus here on \textit{factorization systems}.

The first factorization systems that we will consider are the (surjection, inclusion) factorization and the (hyperconnected, localic) factorization. These are the two most well-known factorization systems for geometric morphisms. For essential geometric morphisms between presheaf toposes, an explicit construction for these two factorizations is given in \cite[\S A4.2 and \S A4.6]{Ele}. If we apply this to the special case of an essential geometric morphism $f: \PSh(M) \to \PSh(N)$ induced by a semigroup homomorphism $\phi: M \to N$, then we get a factorization
\begin{equation*}
\begin{tikzcd}[row sep=small,column sep=huge]
M \ar[r,"{\pi}",two heads] & M/{\sim} \, \ar[r,hook,"{\psi}"] 
& eNe \ar[r,"{\iota}",hook] & N \\
\Setswith{M} \ar[r,"{\text{hyperconnected}}"'] & \Setswith{M/{\sim}} \ar[r,"{\text{localic surj.}}"'] & \Setswith{eNe} \ar[r,"{\mathrm{inclusion}}"'] & \Setswith{N}
\end{tikzcd}
\end{equation*}
where the hyperconnected part is induced by the projection of $M$ onto its image $M/{\sim} = \phi(M)$, the localic surjection part is induced by the inclusion of $M/{\sim}$ in $eNe$ (with $e=\phi(1)$), and the inclusion part is induced by the semigroup inclusion $eNe \subseteq N$. The localic part is the composition of the localic surjection part and the inclusion part, while the surjection part is the composition of the hyperconnected part and the localic surjection part. For a general geometric morphisms $f : \PSh(M) \to \PSh(N)$ given by a $[N,M)$-set $A$, we can also consider the (surjection, inclusion) factorization $\PSh(M) \to \Ecal \to \PSh(N)$, but in this case the intermediate topos is not necessarily again a topos of presheaves. However, we can still give concrete characterizations of when $f$ is surjective, localic or hyperconnected, in terms of the $[N,M)$-set $A$.

Another factorization system that we will discuss is the (terminal-connected, \'etale) factorization, which exists for all essential geometric morphisms, see \cite[\S4.7]{caramello-denseness}. For an essential geometric morphism $f : \PSh(M) \to \PSh(N)$, induced by a semigroup homomorphism $\phi: M \to N$, it follows from the definition that the intermediate topos is again a presheaf topos. We describe the factorization as explicitly as possible, which leads to a characterization of when $f$ is terminal-connected (resp.\ \'etale) in terms of the semigroup homomorphism $\phi$. For a more general geometric morphism $f : \PSh(M) \to \PSh(N)$, induced by a $\fllr{N}{M}$-set $A$, we again give a characterization of terminal-connectedness (in the sense of Osmond \cite[Definition 5.3.3]{osmond-2geometries}). Because \'etale geometric morphisms are always essential, they do not have to be considered separately here. However, note that the (terminal-connected, \'etale) factorization does not always exist for general geometric morphisms.

A last factorization that we will consider is the (pure, complete spread) factorization, as studied extensively by Bunge and Funk, see \cite{bunge-funk-spreads-I}, \cite{bunge-funk-spreads-II} and \cite{bunge-funk}. This factorization exists whenever the domain topos is locally connected, and it is conceptually dual to the (terminal-connected, \'etale) factorization mentioned above. For an essential geometric morphism $f : \PSh(M) \to \PSh(N)$ induced by a semigroup homomorphism $\phi: M \to N$, the factorization is dual in a literal sense: the geometric morphism induced by $\phi$ is pure (resp.\ a complete spread) if and only if the geometric morphism induced by $\phi\op : M\op \to N\op$ is terminal-connected (resp.\ \'etale). For a general geometric morphism $f : \PSh(M) \to \PSh(N)$ given by a $\fllr{N}{M}$-set $A$, we give a characterization of when $f$ is pure. In our setting, $f$ can only be a complete spread if it is essential, so only a study of general pure geometric morphisms is needed here.

It follows from the work of Bunge and Funk \cite[Corollary 7.9]{bunge-funk-spreads-II} that the intersection of \'etale geometric morphisms and complete spreads is in our case given by the \textit{locally constant \'etale} morphisms. These are employed in a topos-theoretic version of Galois theory. As an application of our investigation, we recover the result that the Galois groupoid for a topos of the form $\PSh(M)$ is a group, and is exactly the \textit{groupification} of $M$.

\subsection*{Overview}

In the Section \ref{sec:bg}, we recall how semigroup homomorphisms and biactions of monoids induce geometric morphisms, as well as some basic categorical constructions which we shall need later. We tackle the (surjection, inclusion) and (hyperconnected, localic) factorization systems in Section \ref{sec:si-hl}, the (terminal-connected, \'etale) factorization system in Section \ref{sec:tc-etale} and finally the (pure, complete spread) factorization in Section \ref{sec:pure-complete-spread}. Each of these sections begin with some background on the types of morphism involved, followed by an investigation of the factorization system for essential geometric morphisms coming from semigroup homomorphisms; the end of each section contains an attempt to characterize the biactions producing geometric morphisms in the various classes.

In Section \ref{sec:compare}, we investigate the relationship between the latter two factorization systems, in particular giving examples illustrating the various possible relationships between \'etale morphisms and complete spreads. We apply this in Section \ref{sec:galois} to streamline the application in the Galois theory of our toposes of discrete monoid actions.

\section{Background}
\label{sec:bg}

\subsection{Essential Geometric Morphisms}
\label{ssec:essential}

Let $\Ecal$ and $\Fcal$ be Grothendieck toposes. Recall that a \textit{geometric morphism} $f : \Fcal \to \Ecal$ is by definition an adjunction
\begin{equation*}
\begin{tikzcd}
\Fcal \ar[r, bend right,"{f_*}"'] \ar[r, "\bot", phantom] & \Ecal, \ar[l, bend right,"{f^*}"']
\end{tikzcd}
\end{equation*}
with $f_*$, the \textit{direct image functor}, right adjoint to $f^*$, the \textit{inverse image functor}, where the latter is required to preserve finite limits. We follow the convention that a 2-morphism or \textit{geometric transformation} $f \Rightarrow g$ between geometric morphisms $f,g : \Fcal \to \Ecal$ is a natural transformation $f^* \Rightarrow g^*$.

A geometric morphism is said to be \textbf{essential} if $f^*$ has a left adjoint, denoted $f_!$:
\begin{equation*}
\begin{tikzcd}[column sep=large,row sep=large]
\Fcal \ar[r,bend left=65,"{f_!}"] \ar[r,bend right=55,"{f_*}"'] \ar[r, phantom, shift left=6, "\bot"] \ar[r, phantom, shift right=4, "\bot"] & \Ecal. \ar[l,"{f^*}"', shift left = 0.5]
\end{tikzcd}
\end{equation*}
By the Special Adjoint Functor Theorem, a geometric morphism $(f^* \dashv f_*)$ is essential precisely if $f^*$ preserves not just finite limits but all small limits. Recall that a functor $F: \Ccal \to \Dcal$ induces an essential geometric morphism $f: \Setswith{\Ccal} \to \Setswith{\Dcal}$ whose inverse image functor is precomposition with $F\op$, so that $f_*$ and $f_!$ are right and left Kan extensions along $F\op$, respectively. Conversely, any essential geometric morphism between presheaf toposes is (up to natural isomorphism) induced by some functor $F$ in this way, which is recovered by restricting $f_!$ to the representable presheaves.

From Theorem 6.5 of \cite{TDMA}, we have an equivalence between the 2-category of monoids, semigroup homomorphisms and `conjugations' (monoid elements which commute appropriately with homomorphisms), and the 2-category of the corresponding presheaf toposes, essential geometric morphisms and natural transformations, up to reversing the direction of the conjugations. These presheaf toposes have a great deal more structure than the monoids from which they are constructed, and as such this equivalence give us access to a variety of approaches for examining the subtler properties of monoids and their right actions. We recall from \cite{hr1} that, given a monoid $M$, its topos $\Setswith{M}$ of actions is equipped with a canonical point and a global sections morphism:
\begin{equation} \label{eq:can-point-and-global-sections}
\begin{tikzcd}
\Set \ar[r, bend left=55, "- \times M"] \ar[r, bend right=45, "{\Hom_{\Set}(M{,}-)}"',pos=.55]
\ar[r, phantom, shift left=6, "\bot", near end] \ar[r, phantom, shift right=4, "\bot", near end] &
{\Setswith{M}} \ar[l, "U"', near start] \ar[r, "C", bend left = 50] \ar[r, "\Gamma"', bend right= 40]
\ar[r, symbol = \bot, shift right = 4, near start] \ar[r, symbol = \bot, shift left = 6, near start] &
\Set \ar[l, "\Delta"', near end],
\end{tikzcd}
\end{equation}
where the functors not explicitly specified are:
\begin{itemize}
	\item the forgetful functor $U$ sending a right $M$-set to its underlying set;
	\item the global sections functor $\Gamma$ sending an $M$-set $A$ to its set
\begin{equation*}
	\Fix_M(A)= \Hom_{\Setswith{M}}(1,A)
\end{equation*}
	of fixed points under the action of $M$;
	\item the constant sheaf functor $\Delta$ sending a set $B$ to the same set with trivial $M$-action;
	\item the connected components functor $C$ sending an $M$-set $A$ to its set of components under the action of $M$ (that is, to its quotient under the equivalence relation generated by $a \sim a \cdot m$ for $a \in A$, $m \in M$).
\end{itemize}
It should also be noted that for a set $X$, the $M$-action on $\Hom_{\Set}(M{,}X)$ by $m \in M$ sends $f$ to $(n \mapsto f(mn))$, while the $M$ action on $X \times M$ is by right multiplication on the $M$-component.

These geometric morphisms correspond under the equivalence to the canonical monoid homomorphisms $1 \to M$ and $M \to 1$. More generally, an arbitrary monoid homomorphism $\phi$ gets sent to the essential geometric morphism whose inverse image is restriction of the action along $\phi$. The geometric morphism corresponding to an arbitrary semigroup homomorphism is a little more complicated, and can be most concisely described in terms of a $\fllr{N}{M}$ set and a $\lr{M}{N}$-set; see Lemma \ref{lem:esstensor} below.

\subsection{General Geometric Morphisms}

In previous work \cite[Propositions 1.5 and 1.8]{hr1}, we discussed how more generally a geometric morphism $f: \Setswith{M} \to \Setswith{N}$ can be understood as a tensor--hom adjunction. We recall those results here.

\begin{dfn}
If $X$ is a set equipped with a left $N$-action and a right $M$-action, then we say that the left $N$-action and right $M$-action are \textbf{compatible} if $(n\cdot x)\cdot m = n \cdot (x\cdot m)$ for all $n \in N$, $x \in X$ and $m \in M$. Sets with a compatible left $N$-action and right $M$-action will be called \textbf{$(N,M)$-sets}.\footnote{We read this as `left-$N$-right-$M$-set'.} As homomorphisms between these, we of course consider functions commuting with both actions.
\end{dfn}

Recall that a left $N$-set $A$ is said to be \textbf{flat} if the functor
\[- \otimes_N A: [N\op,\Set] \to \Set\]
preserves finite limits, which is equivalent (see e.g.\ \cite[VII, Theorem 3]{MLM})
to the conditions that
\begin{enumerate}
\item $A$ is non-empty;
\item for elements $b,b' \in A$ there exists $a \in A$ and $n,n' \in N$ with $n \cdot a = b$ and $n' \cdot a = b'$; and
\item whenever $c \in A$ and $n,n' \in N$ with $n \cdot c = n' \cdot c$, there exists $d \in A$, $p \in N$ with $p \cdot d = c$ and $np = n'p$.
\end{enumerate}

\begin{dfn}
\label{dfn:fllr}
We say a $\lr{N}{M}$-set $A$ is \textbf{flat}, or a \textbf{$\fllr{N}{M}$-set}, if it is flat as a left $N$-set. The category of $\fllr{N}{M}$-sets forms a full subcategory of the category of $\lr{N}{M}$-sets.
\end{dfn}

As shown below, the category of geometric morphisms $f: \Setswith{M} \to \Setswith{N}$ is equivalent to the category of $\fllr{N}{M}$-sets. More generally, a (Lawvere) \textbf{distribution} $f : \Fcal \to \Ecal$ between toposes is any adjoint pair $f^* \dashv f_*$, where $f^*$ does not necessarily preserve finite limits, \cite{lawvere-measures} \cite{bunge-funk-spreads-I}. A morphism $f \Rightarrow g$ between distributions $f,g : \Fcal \to \Ecal$ is still a natural transformation $f^* \Rightarrow g^*$. We mention the following special case of Diaconescu's Theorem:

\begin{thm} \label{thm:N-M-sets}
There is an equivalence between the category of geometric morphisms $\Setswith{M} \to \Setswith{N}$ and the category of $\fllr{N}{M}$-sets. More generally, there is an equivalence between the category of distributions $\Setswith{M} \to \Setswith{N}$ and the category of $(N,M)$-sets. 
\end{thm}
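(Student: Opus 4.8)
The plan is to deduce the theorem from the universal property of presheaf categories --- this is Diaconescu's theorem in the case $\Ccal = N$ --- establishing the ``distribution'' equivalence first and then cutting it down. A distribution $f\colon \PSh(M) \to \PSh(N)$ is by definition an adjunction $f^* \dashv f_*$ with $f^*\colon \PSh(N) \to \PSh(M)$, and by the Special Adjoint Functor Theorem (using that $\PSh(N)$ is cocomplete and generated by the representable $N$) a functor $f^*$ admits such a right adjoint exactly when it preserves all small colimits. So the category of distributions $\PSh(M) \to \PSh(N)$, with natural transformations $f^* \Rightarrow g^*$ as morphisms, is the category of cocontinuous functors $\PSh(N) \to \PSh(M)$; by the universal property of $\PSh(N)$ as the free cocompletion of $N$, restriction along the Yoneda embedding $N \hookrightarrow \PSh(N)$ identifies this with $\mathrm{Fun}(N, \PSh(M))$, with pseudoinverse given by left Kan extension, so that $F$ corresponds to $\widetilde{F} \cong - \otimes_N F$. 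Finally, a functor from the one-object category $N$ to $\PSh(M)$ is nothing but a right $M$-set $A$ together with a monoid homomorphism $N \to \End_{\PSh(M)}(A)$, i.e.\ a left $N$-action on $A$ by $M$-equivariant maps --- precisely the data of an $(N,M)$-set --- and natural transformations between such functors are exactly $(N,M)$-set homomorphisms. This yields the second equivalence, the distribution attached to $A$ having inverse image $- \otimes_N A$.

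To cut this down to the first equivalence, note that a distribution is a geometric morphism precisely when $f^* = - \otimes_N A$ preserves finite limits, and I would identify this with flatness of $A$ using the forgetful functor $U\colon \PSh(M) \to \Set$ appearing in \eqref{eq:can-point-and-global-sections}. It has a left adjoint $- \times M$ and a right adjoint $\Hom_{\Set}(M,-)$, hence preserves all small limits and colimits; it is also conservative, and so it reflects finite limits as well. Since tensor products are colimits, $U$ commutes with $- \otimes_N (-)$, giving a natural isomorphism $U \circ (- \otimes_N A) \cong - \otimes_N U(A)$ of functors $\PSh(N) \to \Set$. Because $U$ both preserves and reflects finite limits, $- \otimes_N A$ is left exact iff $- \otimes_N U(A)$ is, which is exactly the statement that $U(A)$ --- equivalently $A$ --- is flat as a left $N$-set, i.e.\ that $A$ is a $\fllr{N}{M}$-set. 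Restricting the equivalence of the first paragraph to these objects gives the first equivalence.

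The formal scaffolding is routine category theory; the real content sits in the comparison of the two notions of flatness, and the one point there that needs care is that $U$ genuinely reflects finite limits (so that left-exactness of $- \otimes_N A$ may be tested after applying $U$) and that $U$ commutes with the relevant tensor products. An alternative would be to verify by hand that a functor $N \to \PSh(M)$ is filtering in the sense of \cite[VII]{MLM} precisely when the underlying left $N$-set satisfies conditions (1)--(3), but routing through $U$ avoids re-deriving the filtering criterion and keeps the argument short.
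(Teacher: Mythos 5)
Your proof is correct, and it follows the same overall architecture as the paper's (establish the equivalence for distributions first, then restrict along flatness), but the two halves are executed differently enough to be worth comparing. For the distribution equivalence the paper argues directly: an adjunction $f^* \dashv f_*$ is sent to the $\lr{N}{M}$-set $f^*(N)$, with left $N$-action coming from the endomorphisms of $N$ in $\Setswith{N}$, and conversely a $\lr{N}{M}$-set $A$ is sent to the tensor--hom adjunction $(-\otimes_N A) \dashv \Hom_M(A,-)$, with the verification deferred to an earlier reference; the morphism part is checked by hand using that a natural transformation between cocontinuous functors is determined by its component at the generator $N$. You instead invoke the universal property of $\Setswith{N}$ as the free cocompletion of $N$, which packages the object and morphism parts simultaneously and gives fully-faithfulness of the comparison for free, at the cost of quoting a larger theorem; the two routes are of course two phrasings of the same Diaconescu-style argument. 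The more substantive difference is in the flatness step. The paper's definition of flatness of $A$ refers to left-exactness of $-\otimes_N A \colon \Setswith{N} \to \Set$, whereas the geometric-morphism condition concerns $-\otimes_N A \colon \Setswith{N} \to \Setswith{M}$; the paper passes between the two without comment. Your argument via the forgetful functor $U$ --- which preserves all limits and colimits (having adjoints on both sides), is conservative and hence reflects finite limits, and commutes with $-\otimes_N(-)$ because the tensor is a colimit --- supplies exactly the justification the paper leaves implicit, and is sound. So the proposal is not only correct but slightly more complete than the printed proof on this point.
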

\begin{proof}
At the level of objects, the equivalences send an adjunction $f^* \dashv f_*$ to the $\lr{N}{M}$-set $f^*(N)$, which has a right $M$-action by virtue of being an object of $\Setswith{M}$, and a left $N$ action coming from the images of the endomorphisms of $N$ as an object of $\Setswith{N}$, which consist of left multiplication by elements of $N$. Conversely, a $\lr{N}{M}$-set $A$ is sent to the tensor-hom adjunction $(- \otimes_N A) \dashv \Hom_{M}(A,-)$; see \cite[Proposition 1.5]{hr1}.

Given two adjunction $f^* \dashv f_*$ and $g^* \dashv g_*$, a natural transformation $f^* \Rightarrow g^*$ is determined by its component $f^*(N) \to g^*(N)$, which is automatically a right-$M$-set homomorphism; it is also a left-$N$-set homomorphism by naturality with respect to the endomorphisms of $N$. Conversely, a $\lr{N}{M}$-set homomorphism $A \to B$ induces a natural transformation $- \otimes_N A \to - \otimes_N B$ by composition on the second component; commutation with the respective actions ensures that this is well-defined and an $M$-set homomorphism at each object $X$.

Finally, the geometric morphisms $f$ are precisely the distributions such that $f^*$ preserves finite limits, so correspond under this equivalence to the full subcategory of $\fllr{N}{M}$-sets, as required.
\end{proof}

Thus we have an algebraic characterization of arbitrary geometric morphisms between toposes of discrete monoid actions, as well as an alternative perspective on the extra adjunction $(f_! \dashv f^*)$ in an essential geometric morphism $f$. Explicitly, by direct calculation:

\begin{lemma}
\label{lem:esstensor}
Let $f: \Setswith{M} \to \Setswith{N}$ be an essential geometric morphism induced by a semigroup homomorphism $\phi:M \to N$. Then the $\fllr{N}{M}$-set corresponding to $(f^* \dashv f_*)$ is the left ideal $N \phi(1)$ equipped with left $N$-action by multiplication and right $M$-action by multiplication after applying $\phi$. In particular, when $\phi$ is a monoid homomorphism, the $\fllr{N}{M}$-set is simply $N$ equipped with the respective actions.

Meanwhile, the $\lr{M}{N}$-set corresponding to the extra adjunction $(f_! \dashv f^*)$ is the right ideal $\phi(1) N$ of $N$, similarly equipped with respective multiplication actions but with the handedness reversed.
\end{lemma}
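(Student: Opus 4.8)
The plan is to read off both bisets from their defining values on representables, using Theorem \ref{thm:N-M-sets}. By the construction in that theorem, the $\fllr{N}{M}$-set attached to $(f^*\dashv f_*)$ is $f^*(N)$, where $N$ denotes the right regular representation carrying its left $N$-action by the left multiplications (these being the endomorphisms of $N$ in $\PSh(N)$); and, regarding the pair $(f_!\dashv f^*)$ as a distribution $\PSh(N)\to\PSh(M)$ and applying Theorem \ref{thm:N-M-sets} with the roles of $M$ and $N$ interchanged, the $\lr{M}{N}$-set attached to $(f_!\dashv f^*)$ is $f_!(M)$. So the whole statement reduces to evaluating $f^*$ and $f_!$ on these representables, plus bookkeeping of the left and right actions.

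For $f^*$ I would recall from \cite{TDMA} that the essential geometric morphism induced by a semigroup homomorphism has inverse image naturally isomorphic to $B\mapsto B\phi(1)$, with right $M$-action restricted along $\phi$, i.e.\ $x\cdot m=x\phi(m)$ (one checks this stays inside $B\phi(1)$ using $\phi(m)=\phi(1)\phi(m)=\phi(m)\phi(1)$); equivalently one assembles this from the factorization of $\phi$ as a monoid homomorphism $M\to\phi(1)N\phi(1)$ followed by the inclusion $\phi(1)N\phi(1)\hookrightarrow N$, together with the known (restriction) description of $f^*$ for monoid homomorphisms. Applying this to $B=N$ gives $f^*(N)=N\phi(1)$, and its left $N$-action, being the image under $f^*$ of the left-multiplication endomorphisms of $N$, is again by left multiplication; this is precisely the asserted $\fllr{N}{M}$-set, which reduces to $N$ itself when $\phi(1)=1$. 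Flatness of $N\phi(1)$ as a left $N$-set is automatic from Theorem \ref{thm:N-M-sets}, since $f$ is a genuine geometric morphism (alternatively, $N\phi(1)$ is a retract of ${}_NN$, hence projective, hence flat).

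For $f_!$ I would use that it is the left adjoint of $f^*$, so it is determined by its value on the representable $M$ through the natural isomorphisms $\Hom_{\PSh(N)}(f_!(M),B)\cong\Hom_{\PSh(M)}(M,f^*(B))\cong f^*(B)=B\phi(1)$ (underlying sets, by Yoneda). On the other hand, evaluation at $\phi(1)$ is a natural bijection from $\Hom_{\PSh(N)}(\phi(1)N,B)$ to $B\phi(1)$ (its inverse sends $b$ to the map $\phi(1)n\mapsto bn$, well defined because $b=b\phi(1)$), so $f_!(M)\cong\phi(1)N$ as right $N$-sets. It then remains to identify the left $M$-action on $f_!(M)$: tracing the left-multiplication endomorphisms of $M$ through the isomorphisms above, using naturality of the unit $\eta\colon M\to f^*f_!(M)$ together with $\eta(m)=\eta(1)\cdot\phi(m)$, shows that this action corresponds to $m\cdot x=\phi(m)x$ on $\phi(1)N$, which together with the right $N$-action by multiplication is the claimed $\lr{M}{N}$-set with handedness reversed. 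I expect this last identification of the $M$-action (equivalently, computing $f_!$ of the corner inclusion $\phi(1)N\phi(1)\hookrightarrow N$ on the representable, which is $-\otimes_{\phi(1)N\phi(1)}\phi(1)N$) to be the only step requiring genuine care; the rest is direct calculation, the one pitfall being to keep straight that the $N$-actions on $f^*(N)$ and on $f_!(M)$ are the ones inherited from the ambient topos $\PSh(N)$, not from a tensor factor.
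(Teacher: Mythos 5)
Your proposal is correct and matches what the paper intends: the lemma is asserted there ``by direct calculation'' with no written proof, and your argument is exactly that calculation, carried out via Theorem \ref{thm:N-M-sets} by evaluating $f^*$ and $f_!$ on the representables and tracking the induced actions. Both halves (the identification $f^*(N)=N\phi(1)$ and the Yoneda/adjointness computation giving $f_!(M)\cong\phi(1)N$ with left $M$-action $m\cdot x=\phi(m)x$) check out.
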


The correspondence from Theorem \ref{thm:N-M-sets} is well-behaved with respect to composition of geometric morphisms. 

\begin{lemma}
\label{lem:compose1}
Suppose $g: \Setswith{M} \to \Setswith{L}$ and $f: \Setswith{L} \to \Setswith{N}$ are induced by the $\fllr{L}{M}$-set $B$ and the $\fllr{N}{L}$-set $A$ respectively. Then $f \circ g$ is induced by $A \otimes_L B$ (up to isomorphism). This result extends to all distributions. 
\end{lemma}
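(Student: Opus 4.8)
The plan is to reduce the statement to the explicit description of the equivalence in Theorem \ref{thm:N-M-sets}, which sends a distribution $h : \Setswith{M} \to \Setswith{N}$ to the $\lr{N}{M}$-set $h^*(N)$: the value of the inverse image on the representable $N$, with its right $M$-action coming from $\Setswith{M}$ and its left $N$-action coming from the images under $h^*$ of the left-multiplication endomorphisms of $N$ in $\Setswith{N}$. So it suffices to produce an isomorphism of $\lr{N}{M}$-sets $(f \circ g)^*(N) \cong A \otimes_L B$; since the equivalence is an equivalence of categories, this will upgrade to an isomorphism between $f \circ g$ and the geometric morphism induced by $A \otimes_L B$, as required.

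Since $(f \circ g)^* = g^* \circ f^* = (- \otimes_N A) \otimes_L B$ by hypothesis, we have $(f \circ g)^*(N) = (N \otimes_N A) \otimes_L B$. The key intermediate step is the canonical isomorphism $N \otimes_N A \xrightarrow{\sim} A$, $n' \otimes a \mapsto n' \cdot a$, which I would observe is an isomorphism of right $L$-sets (i.e.\ in $\Setswith{L}$), and which moreover intertwines the endomorphism of $N \otimes_N A$ induced by left multiplication by $n \in N$ on $N$ with left multiplication by $n$ on $A$ --- the latter being a morphism in $\Setswith{L}$ precisely because the left $N$- and right $L$-actions on the $\fllr{N}{L}$-set $A$ commute. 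Applying $g^* = - \otimes_L B$ and using functoriality, I get $(N \otimes_N A) \otimes_L B \cong A \otimes_L B$ as right $M$-sets, with the endomorphism induced by left multiplication by $n$ carried to $a \otimes b \mapsto (n \cdot a) \otimes b$, which is exactly the left $N$-action making $A \otimes_L B$ an $\lr{N}{M}$-set. Hence $(f \circ g)^*(N) \cong A \otimes_L B$ as $\lr{N}{M}$-sets.

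It then remains to note that $A \otimes_L B$ really is a \emph{flat} left $N$-set, so that it corresponds to a geometric morphism and not merely a distribution under Theorem \ref{thm:N-M-sets}: this is automatic, since $(f \circ g)^* = g^* \circ f^*$ is a composite of finite-limit-preserving functors, and Theorem \ref{thm:N-M-sets} then forces its associated $\lr{N}{M}$-set to be flat. The distribution case is handled by the identical computation, never having used that $f^*$ or $g^*$ preserves finite limits, only that they are inverse-image parts of distributions.

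I expect the only real content --- the ``hard part'' --- to be the bookkeeping in the middle step: checking that the chain $g^*(f^*(N)) \cong g^*(A) = A \otimes_L B$ respects the left $N$-action, i.e.\ that it is an isomorphism of $\lr{N}{M}$-sets and not just of right $M$-sets. An alternative route is to prove directly the associativity isomorphism $(X \otimes_N A) \otimes_L B \cong X \otimes_N (A \otimes_L B)$ natural in $X \in \Setswith{N}$ (via a Fubini argument for coends, or by comparing universal properties of bilinear maps) and transport the adjunction; but that forces one to verify compatibility of this isomorphism with the unit/counit data, which the equivalence of Theorem \ref{thm:N-M-sets} packages for free, so I would take the first route.
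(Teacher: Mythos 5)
Your proof is correct, but it takes a different (if closely related) route from the paper's. The paper's argument is the one you describe as the ``alternative'': it observes that $g^*f^*(X) \simeq (X \otimes_N A) \otimes_L B$ and invokes associativity of the tensor product, i.e.\ the natural isomorphism $(X \otimes_N A) \otimes_L B \cong X \otimes_N (A \otimes_L B)$, which immediately identifies $g^* f^*$ with $- \otimes_N (A \otimes_L B)$. You instead evaluate only at the representable object $N$ and lean on the explicit description of the equivalence in Theorem \ref{thm:N-M-sets}, so that the whole burden is the single check that $(N \otimes_N A) \otimes_L B \cong A \otimes_L B$ respects the left $N$-action induced by the left-multiplication endomorphisms of $N$; that check is carried out correctly, and your flatness remark is sound. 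What your route buys is that you never need the associativity isomorphism to be natural in $X$; what the paper's route buys is brevity. One small correction: your stated reason for avoiding the naturality route --- that one would have to verify compatibility with the unit/counit data --- is not a real obstacle, since the paper defines a morphism of distributions to be simply a natural transformation of inverse image functors, so a natural isomorphism $g^*f^* \cong - \otimes_N (A \otimes_L B)$ already \emph{is} an isomorphism of distributions with no further checks. This does not affect the validity of your argument.
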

\begin{proof}
This is immediate from the fact that $g^*f^*(X) \simeq (X \otimes_N A) \otimes_L B$ and the tensor product is associative up to isomorphism.
\end{proof}

\subsection{Categories of Elements and Slice Toposes}
\label{ssec:categories-of-elements}

Let $\Ccal$ be a small category and let $X: \Ccal\op \to \Set$ be a presheaf on $\Ccal$. Recall that the \textbf{category of elements} of $X$ is the category $\int_{\Ccal} X$ having
\begin{itemize}
\item as objects, pairs $(C,a)$, where $C$ is an object of $\Ccal$ and $a \in X(C)$;
\item as morphisms $(C,a) \to (D,b)$ the morphisms $f:C \to D$ such that $b \cdot f = a$;
\item composition given by composition in $\Ccal$.
\end{itemize}
We shall also need the dual construction: viewing a functor $Y: \Ccal \to \Set$ as a contravariant functor defined on $\Ccal\op$, we define $\int^{\Ccal}Y := \left(\int_{\Ccal\op} Y \right)\op$.

When $\Ccal=M$ is a monoid, a presheaf on $M$ is precisely a right $M$-set. Since there is only one object, we simplify the description of categories of elements by dropping the indexing over the objects. If $M$ is commutative, then $\int_M M$ agrees with the category $C(M)$ appearing in \cite[\S4.1]{connes-consani-affine}.

Categories of elements are useful for studying slice toposes. Recall that for any category $\Ecal$ and object $X$ in $\Ecal$, the \textbf{slice category} $\Ecal/X$ is the category with,
\begin{itemize}
\item as objects the morphisms $f: E \to X$ in $\Ecal$ with codomain $X$;
\item as morphisms from $f : E \to X$ to $f' : E' \to X$ the morphisms $g : E \to E'$ such that $f' \circ g = f$;
\item composition given by composition of morphisms in $\Ecal$. 
\end{itemize}
For a topos $\Ecal$ and an object $X$ in $\Ecal$, the slice category $\mathcal{E}/X$ is again a topos, inheriting all of the required properties from $\Ecal$ (this fact is sometimes called the \textit{fundamental theorem of topos theory}). We shall refer to a topos of the form $\mathcal{E}/X$ for a generic object $X$ as a \textbf{slice of $\Ecal$}. The relation between categories of elements above and slice toposes can be described as follows:
\begin{proposition}
\label{prop:slice}
Consider a category $\Ccal$ and a presheaf $X$ on $\Ccal$. Then there is an equivalence of categories
\begin{equation*}
\Setswith{\Ccal}/X ~\simeq~ \PSh \left(\int_{\Ccal} X\right).
\end{equation*}
\end{proposition}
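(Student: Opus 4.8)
The plan is to construct the equivalence explicitly and check it is well-defined on both objects and morphisms, then exhibit quasi-inverses. First I would define a functor $\Phi\colon \Setswith{\Ccal}/X \to \PSh(\int_{\Ccal}X)$ as follows: given an object $f\colon E \to X$ of the slice, define a presheaf $\Phi(f)$ on $\int_{\Ccal}X$ by sending $(C,a)$ (with $a \in X(C)$) to the fibre $f_C^{-1}(a) \subseteq E(C)$, and sending a morphism $g\colon (C,a) \to (D,b)$ (i.e.\ $g\colon C \to D$ in $\Ccal$ with $b\cdot g = a$) to the restriction of $E(g)\colon E(D) \to E(C)$, which lands in $f_C^{-1}(a)$ because $f$ is a natural transformation and $b \cdot g = a$. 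On a slice morphism $h\colon (E,f) \to (E',f')$, the components of $h$ restrict to the relevant fibres since $f' \circ h = f$, giving a natural transformation $\Phi(f) \to \Phi(f')$; functoriality is then routine.

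Next I would construct the candidate quasi-inverse $\Psi\colon \PSh(\int_{\Ccal}X) \to \Setswith{\Ccal}/X$. Given a presheaf $P$ on $\int_{\Ccal}X$, define $\Psi(P)(C) := \coprod_{a \in X(C)} P(C,a)$, with the $\Ccal$-action sending $u\colon C \to D$ and $(b, x) \in \coprod_{b \in X(D)} P(D,b)$ to $(b\cdot u,\, P(u)(x))$, where here $u$ is regarded as the morphism $(C, b\cdot u) \to (D,b)$ in $\int_{\Ccal}X$; associativity and unitality of this action follow from functoriality of $P$ and of $X$. The structure map $\Psi(P) \to X$ sends $(a,x) \in \coprod_{a\in X(C)}P(C,a)$ to $a \in X(C)$, which is visibly natural. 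On morphisms $P \to Q$ of presheaves over $\int_{\Ccal}X$, take the evident coproduct of components; this commutes with the $\Ccal$-action and with the maps to $X$.

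Finally I would check $\Phi$ and $\Psi$ are mutually quasi-inverse. For $P \in \PSh(\int_{\Ccal}X)$ we have $\Phi\Psi(P)(C,a) = (\text{structure map})_C^{-1}(a) = P(C,a)$ on the nose, and one verifies the restriction maps agree, so $\Phi\Psi \cong \id$; conversely for $f\colon E \to X$ the set $\Psi\Phi(f)(C) = \coprod_{a\in X(C)} f_C^{-1}(a)$ is canonically in bijection with $E(C)$ (partitioning $E(C)$ by the value of $f_C$), naturally in $C$ and compatibly with the maps to $X$, so $\Psi\Phi \cong \id$. I expect no serious obstacle here; the one point requiring a little care is bookkeeping the identification of morphisms $g\colon C\to D$ in $\Ccal$ with morphisms in $\int_{\Ccal}X$ having a prescribed target and the induced source $b \cdot g$, and making sure the direction of the action (right $\Ccal$-presheaf versus the slice) is tracked consistently throughout. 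Alternatively, one could cite this as the well-known Grothendieck-construction/comma-category identity $\widehat{\Ccal}/X \simeq \widehat{\int_{\Ccal}X}$ (see e.g.\ \cite[A1.1.7]{Ele}), but giving the explicit functors makes the later computations with $M$-actions more transparent.
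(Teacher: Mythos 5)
Your proposal is correct and follows exactly the paper's approach: the paper defines the same fibre-wise functor $g \mapsto \hat{g}$ with $\hat{g}(C,x) = g(C)^{-1}(\{x\})$ and the same coproduct quasi-inverse $G \mapsto \tilde{G}$ with $Y(C) = \coprod_{x \in X(C)} G(C,x)$, leaving the remaining verifications to the reader (citing Exercise III.8 of Mac Lane--Moerdijk). Your write-up simply fills in more of those routine details.
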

\begin{proof}
In one direction, an object $g:Y \to X$ on the left hand side is sent to the presheaf $\hat{g}$ on $\int_{\Ccal} X$ sending $(C,x)$ to $g(C)^{-1}(\{x\}) \subseteq Y(C)$. In the opposite direction, a presheaf $G$ on $\int_{\Ccal} X$ is sent to the object $\tilde{G}:Y \to X$, where $Y(C) = \coprod_{x \in X(C)}G(C,x)$ and $\tilde{G}$ sends the elements in each $G(C,x)$ to $x$. We leave the remaining details to the reader; this features as Exercise III.8 in \cite{MLM}.
\end{proof}

\subsection{Idempotent Completion}
\label{ssec:idempotentcomplete}

\begin{dfn}
Recall that a category $\Ccal$ is \textbf{idempotent complete} (also known as \textit{Cauchy complete} or \textit{Karoubi complete}) if every idempotent splits, in the sense that given any idempotent $e:C \to C$ in $\Ccal$, there exist morphisms $r:C \to D$ and $s:D \to C$ with $r \circ s = \id_D$ and $s \circ r = e$, which are automatically unique up to unique isomorphism with $D$.
\end{dfn}

We can construct the idempotent completion of any (small) category. This results in an equivalent category of presheaves; conversely, an idempotent complete category can be recovered up to equivalence from its category of presheaves as the subcategory of indecomposable projective objects. Hence there is a unique idempotent complete category up to equivalence representing any presheaf category.

We recall the following from \cite{TDMA}:
\begin{lemma}
\label{lem:Mcheck}
The idempotent completion $\check{M}$ of a monoid $M$ is given by the category with,
\begin{itemize}
\item as objects the idempotents of $M$ (the object corresponding to an idempotent $e \in M$ is denoted by $\underline{e}$);
\item as morphisms $\underline{e} \to \underline{d}$ the elements $m \in M$ such that $me = m = dm$;
\item composition given by multiplication in $M$.
\end{itemize}
As the name suggests, this category is idempotent complete, and is the unique idempotent complete category up to equivalence such that $\Setswith{M} \simeq \Setswith{\check{M}}$.
\end{lemma}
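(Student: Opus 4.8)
The plan is to recognise the category described in the statement as the Karoubi envelope (idempotent completion) of $M$, viewed as a one-object category, and then to deduce the three assertions from standard facts about Cauchy completions, filling in explicit formulas where this is quick. The identification itself is a matter of unwinding definitions: in the Karoubi envelope of a one-object category an object is an idempotent endomorphism $e$, a morphism $e \to d$ is an element $m$ with $m = dme$ — which, using $e^2 = e$ and $d^2 = d$, is equivalent to $me = m = dm$ — and composition is composition in $M$, i.e.\ multiplication. Two structural observations will be used repeatedly: $M$ embeds into $\check{M}$ as the full subcategory on the single object $\underline{1}$, where $1$ is the monoid unit (indeed $\End_{\check{M}}(\underline{1}) = \{m : m\cdot 1 = m = 1\cdot m\} = M$), and every object $\underline{e}$ is a retract of $\underline{1}$ via the morphisms $e : \underline{e} \to \underline{1}$ and $e : \underline{1} \to \underline{e}$, both given by the element $e$.

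For idempotent completeness, let $u : \underline{e} \to \underline{e}$ be an idempotent of $\check{M}$, so that $u \in M$ satisfies $ue = u = eu$ and $uu = u$; in particular $u$ is an idempotent of $M$, hence $\underline{u}$ is an object of $\check{M}$, and the element $u$ defines morphisms $r : \underline{e} \to \underline{u}$ and $s : \underline{u} \to \underline{e}$ whose composites are $r\circ s = uu = u = \id_{\underline{u}}$ and $s \circ r = uu = u$, the latter being exactly the original idempotent, so $u$ splits through $\underline{u}$. For the equivalence $\Setswith{M} \simeq \Setswith{\check{M}}$, I would invoke the Morita-invariance of presheaf categories — a presheaf category depends only on the idempotent completion of its index category, because splittings of idempotents are absolute colimits — so that restriction along $M \hookrightarrow \check{M}$ is an equivalence $\PSh(\check{M}) \to \PSh(M)$. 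For a self-contained treatment one extends a right $M$-set $A$ to the presheaf on $\check{M}$ sending $\underline{e}$ to $Ae := \{ae : a \in A\}$, the splitting in $\Set$ of the idempotent $a \mapsto ae$, with a morphism $\underline{e}\to\underline{d}$ given by $m$ acting on these subsets by right multiplication by $m$; one checks this is functorial, restricts to $A$ on $\underline{1}$, and that — since each $\underline{e}$ is a retract of $\underline{1}$ — natural transformations of such presheaves are in bijection with $M$-set homomorphisms of the restrictions.

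Finally, for uniqueness, the key is an intrinsic description of the objects of a Cauchy completion inside the corresponding presheaf topos: embedded via Yoneda (followed by the extension above), the objects of $\check{\Ccal}$ are precisely the retracts of representable presheaves, and these coincide with the \emph{tiny} objects of $\PSh(\Ccal)$, i.e.\ those $P$ for which $\Hom(P,-)$ preserves all small colimits. Indeed representables are tiny, retracts of tiny objects are tiny, and conversely if $P$ is tiny then writing $P$ as the canonical colimit of representables indexed by $\int_{\Ccal} P$ and applying $\Hom(P,-)$ shows that $\id_P$ factors through some representable, exhibiting $P$ as a retract of it. Since "tiny object" is preserved by equivalences of categories, any idempotent complete small category $\Dcal$ with $\PSh(\Dcal) \simeq \PSh(M)$ has its full subcategory of tiny objects equivalent to that of $\PSh(M)$, i.e.\ $\check{\Dcal} \simeq \check{M}$; and since $\Dcal$ is idempotent complete the canonical fully faithful functor $\Dcal \to \check{\Dcal}$ is essentially surjective, giving $\Dcal \simeq \check{\Dcal} \simeq \check{M}$. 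The one genuinely fiddly point — and the place I would expect to spend the most effort in a self-contained write-up — is the bookkeeping in the equivalence $\PSh(\check{M}) \simeq \PSh(M)$, together with the careful proof that tiny objects are exactly retracts of representables; everything else is essentially formal.
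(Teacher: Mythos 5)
Your proof is correct. Note that the paper itself offers no proof of this lemma --- it is ``recalled from'' the reference [TDMA] --- so there is no in-text argument to compare against; what you have written is a complete, self-contained justification of a statement the paper merely cites. All the individual steps check out: the identification of the described category with the Karoubi envelope of the one-object category $M$ (including the equivalence of $m = dme$ with $me = m = dm$), the observation that every $\underline{e}$ is a retract of $\underline{1}$ via the element $e$ in both directions, the direct splitting of an idempotent $u : \underline{e} \to \underline{e}$ through the object $\underline{u}$, the equivalence $\PSh(M) \simeq \PSh(\check{M})$ via the extension $A \mapsto (\underline{e} \mapsto Ae)$, and the uniqueness argument via the intrinsic characterization of the Cauchy completion as the full subcategory of tiny objects (retracts of representables) of the presheaf topos, which is invariant under equivalence. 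The only mild redundancy is that, having identified $\check{M}$ with the Karoubi envelope, idempotent completeness follows from the general theory; your explicit splitting is nevertheless a useful sanity check and matches the splitting convention used in the paper's definition. This is the standard route one would find in [TDMA] or in any treatment of Morita theory for presheaf categories, so there is nothing to flag.
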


\begin{rmk}
\label{rmk:phicheck}
Any semigroup homomorphism $\phi:M \to N$ induces a functor $\check{\phi}: \check{M} \to \check{N}$ mapping $\underline{e}$ to $\underline{\phi(e)}$ and $m: \underline{e} \to \underline{d}$ to $\phi(m) : \underline{\phi(e)} \to \underline{\phi(d)}$, and this in turn induces the essential geometric morphism $\Setswith{M} \to \Setswith{N}$ corresponding to $\phi$ under the $2$-equivalence mentioned at the start of Section \ref{ssec:essential}.
\end{rmk}

Extending Proposition \ref{prop:slice}, we have the following result.

\begin{corollary}
\label{crly:idcomplete}
Suppose that $\Ccal$ is an idempotent-complete category and $X$ is a presheaf on it. Then $\int_{\Ccal} X$ is idempotent complete. It follows that this is (up to equivalence) the unique idempotent complete category with
\begin{equation*}
\Setswith{\Ccal}/X ~\simeq~ \PSh \left(\int_{\Ccal} X\right).
\end{equation*}
\end{corollary}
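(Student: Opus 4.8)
The plan is to show that $\int_{\Ccal}X$ is idempotent complete whenever $\Ccal$ is, after which the uniqueness clause follows formally from Proposition~\ref{prop:slice} together with the general fact (recalled in Section~\ref{ssec:idempotentcomplete}) that a presheaf category determines its idempotent complete category of indecomposable projectives up to equivalence. So the content is entirely in the first sentence.

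First I would unwind what an idempotent in $\int_{\Ccal}X$ is: an object is a pair $(C,a)$ with $a\in X(C)$, and an endomorphism of $(C,a)$ is a morphism $e\colon C\to C$ in $\Ccal$ with $a\cdot e = a$; such an $e$ is idempotent in $\int_{\Ccal}X$ precisely when it is idempotent in $\Ccal$. Since $\Ccal$ is idempotent complete, $e$ splits there as $e = s\circ r$ with $r\colon C\to D$, $s\colon D\to C$, $r\circ s=\id_D$. The natural candidate for the splitting object in $\int_{\Ccal}X$ is $(D,b)$ where $b := a\cdot s \in X(D)$. Then I would check that $r$ defines a morphism $(C,a)\to(D,b)$ and $s$ a morphism $(D,b)\to(C,a)$: for $r$ we need $b\cdot r = a$, i.e. $a\cdot s\cdot r = a\cdot e = a$, which holds; for $s$ we need $a\cdot s = b$, which is the definition of $b$. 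Finally $r\circ s = \id_D$ is automatic in $\Ccal$, hence in $\int_{\Ccal}X$ (note $\id_D$ is indeed a morphism $(D,b)\to(D,b)$ since $b\cdot\id_D = b$), and $s\circ r = e$ in $\Ccal$, hence in $\int_{\Ccal}X$. This exhibits the splitting, so $\int_{\Ccal}X$ is idempotent complete.

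For the uniqueness statement, I would invoke Proposition~\ref{prop:slice} to identify $\Setswith{\Ccal}/X$ with $\PSh(\int_{\Ccal}X)$, and then recall that any presheaf topos has, up to equivalence, a unique idempotent complete category representing it (namely its full subcategory of indecomposable projective objects, as noted at the start of Section~\ref{ssec:idempotentcomplete}). Since $\int_{\Ccal}X$ is idempotent complete by the above, it is that representing category, which gives the claimed uniqueness.

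I do not expect any genuine obstacle here; the only thing requiring a moment's care is bookkeeping the direction of the right action in the morphism condition $a\cdot e = a$ of $\int_{\Ccal}X$ (and its dual for $\int^{\Ccal}Y$, should one want the analogous statement there), and verifying that the chosen element $b = a\cdot s$ is the right one so that both $r$ and $s$ become morphisms in the category of elements. Everything else is a direct transcription of the splitting in $\Ccal$.
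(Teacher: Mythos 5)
Your proof is correct and follows essentially the same route as the paper: split the idempotent in $\Ccal$ and lift the splitting to $\int_{\Ccal}X$ by pairing $D$ with the element $a\cdot s$, then deduce uniqueness from Proposition~\ref{prop:slice} and the uniqueness of the idempotent-complete category representing a presheaf topos. Your choice $b=a\cdot s$ is the type-correct one (the paper's printed $a\cdot r$ is evidently a slip, since $a\in X(C)$ cannot be acted on by $r\colon C\to D$), and your verification of the two morphism conditions is exactly the check the paper leaves implicit.
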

\begin{proof}
Let $(C,a)$ be an object in $\int_{\Ccal} X$ and suppose that there is an idempotent morphism $e: (C,a) \to (C,a)$ indexed by a morphism $e \in \Ccal$. Then $e$ must itself be an idempotent. Consider the splitting $e = sr$ of $e$ in $\Ccal$; let $D$ be the domain of $s$. Since $a \cdot e = a$, we have $r: (C,a) \to (D,a \cdot r)$ and $s : (D,a \cdot r) \to (C,a)$ in $\int_{\Ccal} X$. This defines the desired splitting of the original idempotent.
\end{proof}

We can also use the idempotent completion to characterize toposes of discrete monoid actions amongst presheaf toposes.

\begin{lemma}
\label{lem:retracted}
Let $\Ccal$ be a category. Then there is a monoid $M$ such that $\PSh(\Ccal) \simeq \PSh(M)$ if and only if there is an object $C$ in $\Ccal$ such that every other object in $\Ccal$ is a retract of $C$. In this case, we can take $M = \End_\Ccal(C)$.
\end{lemma}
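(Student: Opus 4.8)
The plan is to prove the two directions separately, using the characterization of presheaf toposes via idempotent-complete categories recalled in Section \ref{ssec:idempotentcomplete}. For the ``if'' direction, suppose $C$ is an object of $\Ccal$ of which every object is a retract. I would first pass to the idempotent completion $\check{\Ccal}$, noting that $\PSh(\Ccal) \simeq \PSh(\check{\Ccal})$ and that in $\check{\Ccal}$ every object is \emph{still} a retract of $C$ (a retract of a retract of $C$ is a retract of $C$). Now the key observation is that in an idempotent-complete category, an object which has every object as a retract is in particular a generator, and moreover every object arises as the splitting of an idempotent on $C$; hence the full subcategory on the single object $C$, namely the one-object category $M := \End_\Ccal(C)$, has idempotent completion equivalent to $\check{\Ccal}$. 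Concretely, the inclusion $M \hookrightarrow \check{\Ccal}$ is fully faithful, and every object of $\check{\Ccal}$ is a retract of $C$, so by the universal property (or by direct inspection using Lemma \ref{lem:Mcheck}) we get $\check{M} \simeq \check{\Ccal}$, whence $\PSh(M) \simeq \PSh(\check{M}) \simeq \PSh(\check{\Ccal}) \simeq \PSh(\Ccal)$.

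For the ``only if'' direction, suppose $\PSh(\Ccal) \simeq \PSh(M)$ for a monoid $M$. By the uniqueness clause, the idempotent completions are equivalent: $\check{\Ccal} \simeq \check{M}$. In $\check{M}$ (Lemma \ref{lem:Mcheck}) the object $\underline{1}$ has every object $\underline{e}$ as a retract, via $r = e : \underline{1} \to \underline{e}$ and $s = e : \underline{e} \to \underline{1}$ with $rs = e\cdot e = e = \id_{\underline e}$ and $sr = e$ (identifying $\id_{\underline e}$ appropriately); so $\check{\Ccal}$ has an object, call it $C'$, of which every object is a retract. It remains to descend this from $\check{\Ccal}$ to $\Ccal$. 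Since $C'$ lies in $\check{\Ccal}$, it is a retract in $\check{\Ccal}$ of some object $C$ of $\Ccal$ (the idempotent completion adds only retracts of existing objects — or rather, every object of $\check\Ccal$ is a retract of an object of $\Ccal$, by the description of $\check\Ccal$). Then every object of $\Ccal$, being an object of $\check\Ccal$, is a retract of $C'$, hence a retract of $C$; and $C$ is an object of $\Ccal$, as required. Finally, given such a $C$, running the ``if'' direction shows $\PSh(\Ccal)\simeq \PSh(\End_\Ccal(C))$, which gives the last sentence of the statement.

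The main obstacle I anticipate is the bookkeeping around the idempotent completion: one must be careful that ``retract'' is the right relation, that it is transitive, and that it is preserved and reflected appropriately between $\Ccal$, $\check\Ccal$ and the one-object category $M$. In particular the cleanest route for the ``if'' direction is probably to exhibit an explicit equivalence $\check{M} \to \check{\Ccal}$ directly from Lemma \ref{lem:Mcheck}: send $\underline{e}$ (for $e$ an idempotent of $M = \End_\Ccal(C)$) to the splitting of $e$ in $\Ccal$ (which exists if $\Ccal$ is idempotent complete, or in $\check\Ccal$ in general) and check fullness, faithfulness and essential surjectivity — the last using exactly the hypothesis that every object of $\Ccal$ is a retract of $C$. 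This avoids invoking an abstract universal property and makes the argument self-contained.
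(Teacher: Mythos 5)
Your proposal is correct and follows essentially the same route as the paper: for the ``if'' direction, identify the full subcategory on $C$ with $\End_\Ccal(C)$ and show the idempotent completions agree; for the ``only if'' direction, find an object $C'$ of $\check{\Ccal}$ of which everything is a retract, note it is itself a retract of an object $C$ of $\Ccal$, and conclude by transitivity of retracts. Your version merely spells out the details (e.g.\ the explicit splitting $\underline{e} \to \underline{1} \to \underline{e}$ in $\check{M}$) that the paper leaves implicit.
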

\begin{proof}
If $C$ is an object such that every other object is a retract of $C$, then consider the full subcategory of $\Ccal$ on the single object $C$, which we can identify with the monoid $\End_\Ccal(C)$. The idempotent completions of $\End_\Ccal(C)$ and $\Ccal$ agree, so $\PSh(\Ccal) \simeq \PSh(\End_\Ccal(C))$.

Conversely, suppose that $\PSh(\Ccal) \simeq \PSh(M)$ for some monoid $M$. Then there is an object $C'$ in the idempotent completion $\check{\Ccal}$ of $\Ccal$ such that every other object is a retract of $C'$. Because $C'$ lies in the idempotent completion, it is itself a retract of an object $C$ in $\Ccal$. The statement of the lemma then follows from transitivity of retracts.
\end{proof}

\section{The (surjection, inclusion) and (hyperconnected, localic) factorizations}
\label{sec:si-hl}

There are a number of standard factorization systems for geometric morphisms, some applicable to all morphisms, others only to particular classes. While we describe a variety of them here, we focus on the cases which do not take us outside the realm of presheaf toposes, and especially on refinements of these which keep us in the realm of toposes of discrete monoid actions. We begin with factorization systems for essential geometric morphisms, and use these to factorize more general geometric morphisms later.

\begin{dfn}
\label{dfn:gmbasics}
Recall that a geometric morphism $f: \Fcal \to \Ecal$ is:
\begin{itemize}
	\item a \textbf{surjection} if $f^*$ is faithful;
	\item an \textbf{inclusion} if $f_*$ is full and faithful;
	\item \textbf{hyperconnected} if $f^*$ is full and faithful \textit{and} has image closed under subquotients;
	\item \textbf{localic} if every object in $\Fcal$ is a subquotient of an object of the form $f^*(X)$ for some $X$ in $\Ecal$.
\end{itemize}
There are equivalent characterizations of these classes of geometric morphism which we shall employ at various points.
\end{dfn}

The two factorization systems for geometric morphisms that are most well-known are the (surjection, inclusion) factorization, and the (hyperconnected, localic) factorization. That is, every geometric morphism canonically factors as a surjection followed by an inclusion, or as a hyperconnected morphism followed by a localic morphism, uniquely up to compatible equivalence of the intermediate topos. Moreover, these factorizations are compatible in the sense illustrated by \eqref{eq:3part} below.

\subsection{The essential case}

Conveniently, both of these factorizations restrict in a canonical way to the class of essential geometric morphisms between presheaf toposes.

\begin{lemma}
\label{lem:essfacts}
Suppose $f : \Setswith{\Ccal} \to \Setswith{\Dcal}$ is an essential geometric morphism induced by a functor $F : \Ccal \to \Dcal$. Then,
\begin{itemize}
\item $f$ is surjective $\Leftrightarrow$ $F$ is essentially surjective up to retracts;
\item $f$ is an inclusion $\Leftrightarrow$ $F$ is full and faithful;
\item $f$ is hyperconnected $\Leftrightarrow$ $F$ is full and essentially surjective up to retracts;
\item $f$ is localic $\Leftrightarrow$ $F$ is faithful.
\end{itemize}
Here ``essentially surjective up to retracts'' means that for every $D$ in $\Dcal$ there is some $C$ in $\Ccal$ such that $D$ is a retract of $F(C)$. 

In particular, since we can factor any functor $F$ between idempotent-complete small categories as a functor which is essentially surjective up to retracts followed by one which is full and faithful, we obtain a canonical representation of the (surjection, inclusion) factorization of $f$, and the intermediate topos is a presheaf topos. The analogue is true for the (hyperconnected, localic) factorization of $f$.
\end{lemma}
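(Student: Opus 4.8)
The plan is to reduce each of the four equivalences to standard facts about geometric morphisms, translated through the inverse image functor $f^* = (-)\circ F\op : \PSh(\Dcal)\to\PSh(\Ccal)$, whose adjoints $f_*$ and $f_!$ are the right and left Kan extensions along $F\op$. Throughout I will use repeatedly that the representable presheaves $\Dcal(-,D)$ generate $\PSh(\Dcal)$ under colimits, that $f^*\big(\Dcal(-,D)\big)\cong\Ccal(-,FC)$ when... more precisely $f^*(\yon D)$ is the presheaf $C\mapsto\Dcal(FC,D)$, and that $f_!$ restricted to representables recovers $F$, i.e.\ $f_!(\yon C)\cong\yon(FC)$. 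The Yoneda lemma then lets me move back and forth between natural transformations of presheaves and arrows in the base categories.

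First I would handle the \emph{inclusion} case, since it is the cleanest: $f$ is an inclusion iff $f_*$ is full and faithful iff the counit $f^*f_*\Rightarrow\id$ is an isomorphism. Because $f^*$ is cocontinuous and everything in sight is a colimit of representables, this can be checked on representables, where it unwinds (via Yoneda and the Kan extension formula) to the statement that $\Dcal(FC,FC')\to\mathrm{Nat}(\yon C,\yon C')=\Ccal(C,C')$... running the other way: that $F$ is full and faithful. Next, the \emph{localic} case: a geometric morphism is localic iff every object of the domain is a subquotient of something in the image of $f^*$; for presheaf toposes I can instead use the characterization that $f$ is localic iff $f^*$ is ``separating'' on subobjects of $1$, or more usefully iff the unit $\id\Rightarrow f_*f^*$ is monic --- equivalently $f^*$ reflects isomorphisms on the \emph{subobject lattices}, which by a generation-by-representables argument comes down to $\Ccal(C,C')\to\Dcal(FC,FC')$ being injective, i.e.\ $F$ faithful. (Alternatively I can cite that $f$ localic $\iff$ $f$ is an inclusion ``after passing to slices'', but the direct route is shorter.)

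For \emph{surjection} and \emph{hyperconnected} I would use the known fact that a geometric morphism between presheaf toposes $f:\PSh(\Ccal)\to\PSh(\Dcal)$ is a surjection iff $f^*$ is faithful, and that $f^*$ is faithful iff it is faithful on the generating representables and reflects the relevant data --- concretely, $f^*$ faithful forces every representable $\yon D$ to be covered by (a subquotient of, but for surjections: a quotient-free statement) an image $f^*(\ldots)$; the precise equivalence ``$f^*$ faithful $\iff$ $F$ essentially surjective up to retracts'' I would prove by showing (i) if some $D$ is not a retract of any $FC$, then the representable $\yon D$ (or rather a suitable subquotient detecting the retract-closure) is killed by $f^*$ in the sense that a nontrivial pair of maps becomes equal, contradicting faithfulness; and (ii) conversely, if every $D$ is a retract of some $FC$, then since retracts are split idempotents and $f^*$ preserves these, $f^*$ is faithful because it is ``jointly conservative'' on a generating family. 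The hyperconnected case is then obtained by combining: $f$ is hyperconnected iff it is a surjection \emph{and} $f^*$ is full and faithful; faithfulness gives essential surjectivity up to retracts by the surjection case, and fullness of $f^*$ --- checked on representables via Yoneda plus the Kan formula, exactly as in the inclusion argument but without the faithfulness half --- gives fullness of $F$. The closure-under-subquotients condition in the definition of hyperconnected is then automatic once $f^*$ is full and faithful with $f$ a surjection onto a presheaf topos; I would invoke the standard fact (e.g.\ from \cite{Ele}) that for geometric morphisms a full and faithful $f^*$ whose image generates already has image closed under subquotients, so no extra work is needed.

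The \textbf{main obstacle} is the retract subtlety in the surjection/hyperconnected cases: ``essentially surjective'' is too strong and ``essentially surjective up to retracts'' is exactly what faithfulness of $f^*$ detects, because $f^*$ being cocontinuous sees $\yon D$ only up to the idempotent-splitting completion of $\Ccal$. I would make this precise by passing to idempotent completions $\check\Ccal,\check\Dcal$ (using Lemma~\ref{lem:Mcheck}'s ambient framework and the fact that $\PSh(\Ccal)\simeq\PSh(\check\Ccal)$), under which $F$ becomes $\check F$, ``essentially surjective up to retracts'' becomes honest essential surjectivity, and the four equivalences reduce to the textbook statements for essential geometric morphisms between presheaf toposes on idempotent-complete bases. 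Once the equivalences are in hand, the final sentence is immediate: the (essentially-surjective-up-to-retracts, fully-faithful) factorization of $F$ --- obtained by taking the category $\Ccal'$ whose objects are those of $\Dcal$ that are retracts of some $FC$, with $\Dcal$-morphisms between them, i.e.\ the full image of $\check F$ inside $\check\Dcal$ --- is a small category, hence $\PSh(\Ccal')$ is a presheaf topos, and Lemma~\ref{lem:essfacts}'s first three bullets identify the two legs as a surjection followed by an inclusion; the (full, faithful) factorization of $F$ through its ``full image'' (same objects as $\Ccal$, morphisms the images in $\Dcal$) does the same for (hyperconnected, localic).
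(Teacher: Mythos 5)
Your overall architecture (check everything on representables via Yoneda, pass to idempotent completions to turn ``essentially surjective up to retracts'' into honest essential surjectivity, then read off the two factorizations from the corresponding factorizations of $F$) is viable, and it is genuinely different from the paper's proof, which cites Johnstone for the four equivalences and disposes of all the necessity directions in one stroke: once sufficiency of each condition is verified, the stated factorizations of $F$ induce factorizations of $f$, these are unique up to equivalence of the intermediate idempotent-complete category, and since the four conditions are equivalence-invariant they must also be necessary. Be aware that this uniqueness trick is doing real work: in your version the necessity halves (e.g.\ $f$ localic $\Rightarrow$ $F$ faithful, $f$ surjective $\Rightarrow$ $F$ essentially surjective up to retracts) are the harder ones and are only gestured at. For surjectivity your idea can be made precise ($f^*$ faithful implies $f^*$ conservative; if $D$ is a retract of no $F(C)$, the subpresheaf of $\yon(D)$ consisting of maps factoring through some $F(C)$ is proper but becomes invertible under $f^*$), but for the localic case you give no concrete necessity argument at all.

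There are also two concrete missteps. First, ``$f$ is localic iff the unit $\id\Rightarrow f_*f^*$ is monic'' is false: monicity of that unit characterizes \emph{surjections} ($f^*$ faithful), not localic morphisms. The unit you actually need, both here and in the inclusion case, is that of the other adjunction $f_!\dashv f^*$, namely $\yon(C)\to f^*f_!(\yon(C))\cong f^*(\yon(FC))$, whose component at $C'$ is $\Ccal(C',C)\to\Dcal(FC',FC)$; it is monic for all $C,C'$ iff $F$ is faithful (exhibiting each $\yon(C)$ as a subobject of $f^*(\yon(FC))$, whence every presheaf on $\Ccal$ is a subquotient of something in the image of $f^*$), and an isomorphism iff $F$ is full and faithful. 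Second, in the inclusion case you propose to check that the counit $f^*f_*\Rightarrow\id$ is an isomorphism ``on representables'' because everything is a colimit of representables---but $f_*$ is not cocontinuous, so that reduction is unjustified. The fix is again to work with $f_!$: for an adjoint triple $f_!\dashv f^*\dashv f_*$, the functor $f_!$ is full and faithful iff $f_*$ is, and full-faithfulness of $f_!$ (invertibility of the unit $\id\Rightarrow f^*f_!$) genuinely can be tested on representables since all functors involved preserve colimits. Finally, the ``standard fact'' you invoke to get closure under subquotients for free in the hyperconnected case (a full and faithful $f^*$ with generating image is closed under subquotients) is not one I would rely on without a precise citation; either verify closure under subobjects directly from fullness and essential surjectivity up to retracts of $F$, or fall back on the paper's uniqueness-of-factorization argument, which makes the point moot.
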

\begin{proof}
See Johnstone, \cite[Examples A4.2.7(b), A4.2.12(b), A4.6.2(c) and A4.6.9]{Ele}; the case of monoid homomorphisms is even explicitly discussed after Example A4.6.9 there. To sketch a short proof, one can verify directly that the given conditions are sufficient; conversely, since the stated factorizations of $F$ must give factorizations of $f$ which we know to be unique up to equivalence of the intermediate topos (or equivalently up to equivalence of the intermediate idempotent-complete category) and since the given conditions are invariant under equivalence, they must also be necessary.
\end{proof}

Let $\phi:M \to N$ be a semigroup homomorphism. Applying Lemma \ref{lem:essfacts} to the functor $\check{\phi}: \check{M} \to \check{N}$ from Remark \ref{rmk:phicheck} and the corresponding essential geometric morphism $f:\Setswith{M} \to \Setswith{N}$, we deduce the following corollaries for essential geometric morphisms between toposes of discrete monoid actions.

\begin{crly}
\label{crly:surj-inj}
The \textbf{(surjection, inclusion) factorization} of $f$ is canonically represented by the factorization of $\phi:M\to N$ as a monoid homomorphism followed by an inclusion of semigroups of the form $\iota : eNe \hookrightarrow N$, where $e = \phi(1)$ is the idempotent of $N$ which is the image of the identity element of $M$.
\begin{equation*}
\begin{tikzcd}[row sep=small,column sep=huge]
M \ar[r,"{\psi}"] & eNe \ar[r,hook,"{\iota}"] & N \\
\Setswith{M} \ar[r,"{\text{\emph{surjection}}}"'] & \Setswith{eNe} \ar[r,"{\text{\emph{inclusion}}}"'] & \Setswith{N}
\end{tikzcd}
\end{equation*}
In particular, essential geometric morphisms induced by monoid homomorphisms are always surjective. Conversely, given an essential surjection, the inclusion part of its (surjection, inclusion) factorization must be an equivalence. That is, the inclusion $\iota : eNe \to N$ of the image of the corresponding semigroup homomorphism induces an equivalence of toposes. We may therefore assume, up to replacing the monoid presenting the codomain topos with a Morita-equivalent one, that an essential surjection is induced by a monoid homomorphism, rather than a mere semigroup homomorphism.
\end{crly}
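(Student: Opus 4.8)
The plan is to derive everything from Lemma \ref{lem:essfacts} applied to the functor $\check\phi : \check M \to \check N$ of Remark \ref{rmk:phicheck}, which induces $f$. Since $\check M$ and $\check N$ are idempotent complete, Lemma \ref{lem:essfacts} produces the (surjection, inclusion) factorization of $f$ from the canonical factorization of $\check\phi$ as a functor that is essentially surjective up to retracts, followed by a full and faithful one; the intermediate idempotent-complete category is the full subcategory $\mathcal{E} \hookrightarrow \check N$ whose objects are the retracts (in $\check N$) of objects of $\im(\check\phi)$. So the problem reduces to identifying $\mathcal{E}$, and the corestriction $\check M \to \mathcal{E}$ of $\check\phi$, in monoid-theoretic terms.

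First I would observe that $\phi$ corestricts to a monoid homomorphism $\psi : M \to eNe$: indeed $\phi(m) = \phi(1\cdot m\cdot 1) = e\,\phi(m)\,e \in eNe$ for every $m \in M$, and $\psi(1_M) = \phi(1_M) = e$ is the identity element of the monoid $eNe$; writing $\iota : eNe \hookrightarrow N$ for the semigroup inclusion we get $\phi = \iota\circ\psi$, hence $\check\phi = \check\iota\circ\check\psi$. Next I would pin down $\mathcal{E}$. The object $\underline{\phi(1_M)} = \underline{e}$ belongs to $\im(\check\phi)$; every object $\underline{\phi(e')}$ of $\im(\check\phi)$ is a retract of $\underline{e}$ in $\check N$, because $\phi(e')$ is an idempotent of $eNe$, hence an idempotent endomorphism of $\underline{e}$ in $\check N$ whose splitting is $\underline{\phi(e')}$; conversely any retract of $\underline{e}$ in $\check N$ is the splitting of an idempotent endomorphism $g$ of $\underline{e}$, which forces $g = ege \in eNe$. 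Therefore the retract-closure of $\im(\check\phi)$ coincides with that of $\{\underline{e}\}$, namely the full subcategory of $\check N$ on the objects $\underline{g}$ with $g = g^2 \in eNe$. A direct comparison of hom-sets (using that $nx = n = yn$ for idempotents $x,y \in eNe$ implies $en = n = ne$, hence $n = ene \in eNe$) shows this subcategory is precisely the idempotent completion of the monoid $eNe$ as described in Lemma \ref{lem:Mcheck}, so that $\PSh(\mathcal{E}) \simeq \PSh(eNe)$ and, under this identification, the corestriction $\check M \to \mathcal{E}$ of $\check\phi$ is exactly $\check\psi$. Feeding this back through Lemma \ref{lem:essfacts}: the morphism induced by $\psi$ is a surjection (it is induced by a functor which is essentially surjective up to retracts, since its image contains $\underline{e}$ and every object of $\mathcal{E}$ is a retract of $\underline{e}$), the morphism induced by $\iota$ is an inclusion (it is induced by the full and faithful inclusion $\mathcal{E} \hookrightarrow \check N$), and by the uniqueness clause of Lemma \ref{lem:essfacts} their composite is the (surjection, inclusion) factorization of $f$, which is the displayed factorization.

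The remaining assertions then follow formally. If $\phi$ is already a monoid homomorphism then $e = \phi(1_M) = 1_N$, so $eNe = N$, $\iota = \id_N$, and $f$ is a surjection. Conversely, if $f$ is an essential surjection then $(f, \id)$ is itself a (surjection, inclusion) factorization of $f$, so by uniqueness the inclusion part of the factorization above --- the morphism $\PSh(eNe) \to \PSh(N)$ induced by $\iota$ --- must be an equivalence; this is exactly a Morita equivalence $\PSh(eNe) \simeq \PSh(N)$, and modulo it $f$ is presented by the monoid homomorphism $\psi : M \to eNe$.

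I expect the main obstacle to be the identification of $\mathcal{E}$ in the middle paragraph: one must handle the composition conventions of the idempotent completions of Lemma \ref{lem:Mcheck} with care when computing retracts of $\underline{e}$ in both directions and when matching the hom-sets of $\mathcal{E} \subseteq \check N$ with those of the idempotent completion of $eNe$. Everything else is a formal consequence of Lemma \ref{lem:essfacts} together with the uniqueness of the factorization.
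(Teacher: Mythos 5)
Your proposal is correct and follows exactly the route the paper intends: apply Lemma \ref{lem:essfacts} to $\check\phi:\check M\to\check N$ and identify the retract-closure of the image with the idempotent completion of $eNe$ (a verification the paper leaves implicit but which you carry out correctly, including the check that hom-sets between idempotents of $eNe$ inside $\check N$ automatically land in $eNe$). The converse direction via uniqueness/orthogonality of the factorization also matches the paper's reasoning.
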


\begin{crly}
\label{crly:hype-loc}
The \textbf{(hyperconnected, localic) factorization} of $f$ corresponds to the (quotient, injection) factorization of $\phi$, which factors $\phi: M \to N$ through the quotient monoid homomorphism $\pi: M \to M/{\sim}$, where $m \sim n$ if and only if $\phi(m) = \phi(n)$. Diagrammatically:
\begin{equation*}
\begin{tikzcd}[row sep=small,column sep=huge]
M \ar[r,"{\pi}",two heads] & M/{\sim} \ar[r,hook,"{\psi}"] & N \\
\Setswith{M} \ar[r,"{\text{\emph{hyperconnected}}}"'] & \Setswith{M/{\sim}} \ar[r,"{\text{\emph{localic}}}"'] & \Setswith{N}
\end{tikzcd}
\end{equation*}
\end{crly}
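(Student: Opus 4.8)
The plan is to obtain this corollary as a further application of Lemma~\ref{lem:essfacts}, now to the functor $\check\phi : \check M \to \check N$ of Remark~\ref{rmk:phicheck}. First I would note that $m \sim n :\Leftrightarrow \phi(m) = \phi(n)$ is a monoid congruence on $M$, so that $M/{\sim}$ is a monoid, $\pi : M \to M/{\sim}$ a surjective monoid homomorphism, and $\phi$ factors as $\psi \circ \pi$ for the induced injective semigroup homomorphism $\psi : M/{\sim} \to N$ (whose image is the submonoid $\phi(M) \subseteq N$, with identity element $e = \phi(1)$). Functoriality of the idempotent completion then gives $\check\phi = \check\psi \circ \check\pi$, with $\check\pi : \check M \to \widehat{M/{\sim}}$ and $\check\psi : \widehat{M/{\sim}} \to \check N$. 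By Lemma~\ref{lem:essfacts}, the (hyperconnected, localic) factorization of $f$ is carried by the factorization of $\check\phi$ into a full functor that is essentially surjective up to retracts, followed by a faithful functor, and this is unique up to equivalence of the intermediate idempotent-complete category (hence of the intermediate topos). So it will suffice to check that $\check\pi$ is full and essentially surjective up to retracts and that $\check\psi$ is faithful; the intermediate topos is then $\PSh(M/{\sim}) = \Setswith{M/{\sim}}$, with $\check\pi$ inducing the hyperconnected part and $\check\psi$ the localic part.

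Two of those three conditions I expect to be routine. Faithfulness of $\check\psi$ holds since its action on morphisms is simply (a restriction of) the injective function $\psi$. For essential surjectivity up to retracts of $\check\pi$, I would use that for any monoid $P$ every object $\underline p$ of $\check P$ is a retract of $\underline 1$ --- the element $p$, viewed both as a morphism $\underline 1 \to \underline p$ and as a morphism $\underline p \to \underline 1$, gives the splitting since $p\cdot p = p = \id_{\underline p}$; applying this with $P = M/{\sim}$ and recalling $\underline 1 = \check\pi(\underline 1)$ settles it.

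The one step I expect to require genuine work is the fullness of $\check\pi$. Given idempotents $d, e$ of $M$ and a morphism $\bar m : \underline{\pi(e)} \to \underline{\pi(d)}$ of $\widehat{M/{\sim}}$, so that $\bar m\,\pi(e) = \bar m = \pi(d)\,\bar m$ in $M/{\sim}$, I would first lift $\bar m$ arbitrarily, choosing $m_0 \in M$ with $\pi(m_0) = \bar m$ (possible since $\pi$ is surjective), and then correct the lift by setting $m := d\, m_0\, e$. Idempotency of $d$ and $e$ makes $m$ a morphism $\underline e \to \underline d$ in $\check M$, and the relations above force $\check\pi(m) = \pi(d)\,\bar m\,\pi(e) = \bar m$ --- this is precisely where one must be a little careful, conjugating the chosen lift by the two idempotents so that it lands in the correct hom-set of $\check M$. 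Once fullness is in hand, Lemma~\ref{lem:essfacts} and the uniqueness of the factorization complete the proof.
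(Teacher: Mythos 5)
Your proposal is correct and follows exactly the route the paper intends: the paper derives this corollary by applying Lemma~\ref{lem:essfacts} to $\check\phi:\check M\to\check N$, and you have simply supplied the routine verifications (fullness and essential surjectivity up to retracts of $\check\pi$, faithfulness of $\check\psi$) that the paper leaves implicit. The correction $m:=d\,m_0\,e$ of the arbitrary lift is indeed the one point needing care, and your handling of it is right.
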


These two factorization systems are compatible: we can factorize any semigroup homomorphism $\phi$ and the corresponding essential geometric morphism $f$ into three parts. We factorize $\phi$ into a quotient map $\pi: M \to M/{\sim}$, followed by an injective monoid homomorphism $\psi : M/{\sim} \to eNe$, followed by an inclusion $\iota : eNe \to N$, where $e = \phi(1)$. The induced geometric morphisms give a factorization as follows:
\begin{equation}
\label{eq:3part}
\begin{tikzcd}[row sep=small,column sep=huge]
M \ar[r,"{\pi}",two heads] & M/{\sim} \, \ar[r,hook,"{\psi}"] 
& eNe \ar[r,"{\iota}",hook] & N \\
\Setswith{M} \ar[r,"{\text{hyperconnected}}"'] & \Setswith{M/{\sim}} \ar[r,"{\text{localic surj.}}"'] & \Setswith{eNe} \ar[r,"{\mathrm{inclusion}}"'] & \Setswith{N}
\end{tikzcd}
\end{equation}
In this situation, the surjective part is the composition of the hyperconnected and localic surjection parts, and the localic part is the composition of the localic surjection and inclusion parts. It will often be helpful to consider the three parts of this (hyperconnected, localic surjection, inclusion) factorization separately.

\subsection{The general case}

Unfortunately, the latter part of Lemma \ref{lem:essfacts} is not true for a geometric morphism that is not essential: the intermediate topos in the (surjection, inclusion) or (hyperconnected, localic) factorization of a typical geometric morphism $g: \Setswith{M} \to \Setswith{N}$ is not a presheaf topos, let alone a topos of discrete monoid actions\footnote{On the other hand, it was recently shown by the second author that the (hyperconnected, localic) factorization does have a presentation in terms of actions of \textit{topological} monoids, \cite{TTMA}.}. Nonetheless, we can identify conditions on $\fllr{N}{M}$-sets which produce morphisms in these classes.

\begin{proposition} \label{prop:characterization-localic}
Let $f : \Setswith{M} \to \Setswith{N}$ be the geometric morphism corresponding to the $\fllr{N}{M}$-set $A$. Then $f$ is localic if and only if $M$ is a retract of some subobject of $A$, as a right $M$-set.
\end{proposition}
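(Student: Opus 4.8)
The plan is to test the defining condition for a localic morphism --- that every object of $\PSh(M)$ is a subquotient of some $f^*(X)$ --- against the single generating object, the representable right $M$-set $M$, and then to exploit two special features of $M$ as an object of $\PSh(M)$: it is \emph{projective} (every epimorphism onto it splits, since evaluation at the unique object of $M$ is exact on a presheaf topos), and it is \emph{connected} (its image under the connected-components functor $C$ is a singleton, as all of $M$ lies in one orbit of the right action). First I would record the easy reduction that $f$ is localic if and only if $M$ alone is a subquotient of $f^*(X)$ for some right $N$-set $X$: the forward implication is immediate, and conversely every right $M$-set $Y$ is a quotient of a free action $\coprod_J M$, so if $M$ is a subquotient of $f^*(X_0)$ then $Y$ is a subquotient of $\coprod_J f^*(X_0) = f^*(\coprod_J X_0)$, using that $f^*$ preserves coproducts.

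Next I would cut down from an arbitrary $X$ to the object $A = f^*(N)$. Choosing a free presentation $\coprod_I N \twoheadrightarrow X$ and applying $f^*$ --- which, being a left adjoint, preserves coproducts and epimorphisms --- yields an epimorphism $\coprod_I A \twoheadrightarrow f^*(X)$. Since subquotients compose in a topos (pull back a subobject along a quotient and use that epimorphisms are stable under pullback), $M$ is a subquotient of some $f^*(X)$ precisely when $M$ is a subquotient of $\coprod_I A$ for some index set $I$. Now projectivity of $M$ turns ``subquotient'' into ``retract of a subobject'': from an epi $B' \twoheadrightarrow M$ with $B' \hookrightarrow \coprod_I A$ we obtain a splitting $s : M \to B'$, so $M$ is a retract of $B'$. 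Composing $s$ with the inclusion gives a monomorphism $M \hookrightarrow \coprod_I A$; because $M$ is connected, its image is a connected subobject of a coproduct and hence is contained in a single summand $A$. Thus $M$ is isomorphic to a subobject of $A$, and in particular a retract of a subobject of $A$.

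For the converse, suppose $M$ is a retract of a subobject $A' \hookrightarrow A$, via $s : M \to A'$ and $p : A' \to M$ with $p s = \id_M$. Given any right $M$-set $Y$, pick an epimorphism $\coprod_J M \twoheadrightarrow Y$. Coproducts of monos are mono in a topos, so $\coprod_J A' \hookrightarrow \coprod_J A = f^*(\coprod_J N)$, and $\coprod_J M$ is a retract, hence a quotient via $\coprod_J p$, of $\coprod_J A'$. Composing epimorphisms, $Y$ is a quotient of the subobject $\coprod_J A'$ of $f^*(\coprod_J N)$, so $Y$ is a subquotient of $f^*(\coprod_J N)$, and $f$ is localic.

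The main obstacle --- indeed the only non-formal step --- is the passage from ``subquotient of a coproduct $\coprod_I A$'' to ``isomorphic to a subobject of a single $A$''. This is exactly where both the projectivity and the indecomposability (connectedness) of the representable $M$ are used, and it is what makes the clean statement in terms of $A$, rather than in terms of coproducts of copies of $A$, correct; everything else is bookkeeping with the stability of epimorphisms and subquotients in a topos and the cocontinuity of $f^*$.
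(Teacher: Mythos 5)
Your proof is correct and follows essentially the same route as the paper: reduce to the generator $M$, pull the cover $\coprod_I A \twoheadrightarrow f^*(X)$ back to see that $M$ must be a subquotient of a coproduct of copies of $A$, then use projectivity (the splitting) together with connectedness of $M$ to land in a single summand, and for the converse build covers $\coprod_J A' \twoheadrightarrow \coprod_J M \twoheadrightarrow Y$ inside $f^*(\coprod_J N)$. The only cosmetic difference is that you embed $M$ into one copy of $A$ directly via the section, whereas the paper restricts the retraction to the connected component of the subobject containing the section; these are equivalent formulations of the same argument.
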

\begin{proof}
By definition, $f$ being localic requires that every object $X$ of $\Setswith{M}$ be a subquotient of one of the form $f^*(Y)$. By pulling back along (the image under $f^*$ of) the cover of $Y$ by a disjoint union of copies of $N$, we conclude that $f$ is localic if and only if for every object $X$ in $\Setswith{M}$ there is a subobject $C \subseteq \bigsqcup_{i \in I} A$ and an epimorphism $C \to X$. In the special case where $X = M$, we get a surjection $p : C \too M$, which splits since $M$ is projective. Letting $A'$ be the connected component of $C$ containing the section, we conclude that $A'$ must be a subobject of just one of the copies of $A$, and hence by restricting $p$ to $A'$, we conclude that $M$ is a subquotient of $A$, as required. Conversely, if $A$ has a subobject $A'$ of which $M$ is a retract, then each object $X$ in $\Setswith{M}$ admits a surjection $\bigsqcup_{i \in I} A' \too \bigsqcup_{i \in I} M \too X$. Since $\bigsqcup_{i \in I} A'$ is a subobject of $\bigsqcup_{i \in I} A$, this shows that $f$ is localic. 
\end{proof}

We shall extend this proposition to a necessary and sufficient condition for the direct image $f_*$ to be faithful in Scholium \ref{schl:characterization-spread}, but fullness of $f_*$ is challenging in general. We shall at least see a sufficient condition for $f$ to be an inclusion in Corollary \ref{crly:characterization-pure-spread}.

We can characterize surjections in terms of an algebraic condition, albeit a not very enlightening one.

\begin{lemma} \label{lem:characterization-surj}
Let $f : \Setswith{M} \to \Setswith{N}$ be the geometric morphism corresponding to the $\fllr{N}{M}$-set $A$. Then $f$ is a surjection if and only if for all $N$-sets $X$ and elements $x,y \in X$, if we have $x \otimes a = y \otimes a$ in $X \otimes_N A$ for all $a \in A$, then $x = y$.
\end{lemma}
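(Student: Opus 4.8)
The plan is to recall the standard characterization that a geometric morphism $f$ is a surjection if and only if $f^*$ is faithful, and then translate faithfulness of $f^* = - \otimes_N A$ into the stated element-wise condition. Since $\Setswith{N}$ is a topos, faithfulness of a (colimit-preserving) functor $f^*$ is equivalent to $f^*$ reflecting the property of a morphism being an isomorphism, and more concretely — because a parallel pair in a topos is equalized by a subobject and coequalized by a quotient — it suffices to check that $f^*$ reflects epimorphisms and monomorphisms. The crux is monomorphisms: $f^*$ being faithful is equivalent to the statement that whenever $m : Y \to X$ in $\Setswith{N}$ is \emph{not} monic, $f^*(m)$ is not monic either. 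But a failure of monicity of $m$ is witnessed by a pair of distinct elements $x, y$ of some $N$-set with $m(x) = m(y)$; pulling back, we may take $X$ itself to be a two-element-generated subobject and reduce to asking when the canonical map detecting $x = y$ is reflected by $- \otimes_N A$.

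Concretely, here is the route I would follow. First, recall that $f^*$ is faithful if and only if the unit or counit condition fails to be trivial only in the expected way; more usefully, since $f^*$ preserves finite limits and all colimits, $f^*$ is faithful iff it reflects the initial object and reflects monomorphisms, or equivalently (in a topos) iff for every mono $m$ that is not an iso, $f^*(m)$ is not an iso. Second, observe that testing faithfulness reduces to testing on the subobject lattices of finitely generated $N$-sets, and ultimately on the ``diagonal'' situation: given $x, y \in X$, consider the sub-$N$-set $\langle x \rangle \vee \langle y \rangle$ and the quotient that identifies $x$ with $y$; $f^*$ is faithful iff this quotient is sent to a non-isomorphism whenever $x \neq y$. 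Third, compute: the quotient $X \to X/(x\!\sim\!y)$ becomes, after applying $- \otimes_N A$, the map $X \otimes_N A \to (X/(x\!\sim\!y)) \otimes_N A$, whose kernel-pair/coequalizer description shows it is an isomorphism precisely when $x \otimes a = y \otimes a$ for all $a \in A$ already forces $x \otimes a = y \otimes a$ to have been an equality induced downstairs — i.e. precisely when the implication ``$x \otimes a = y \otimes a$ for all $a$'' $\Rightarrow$ ``$x = y$'' can fail. Taking the contrapositive across all $X$, $x$, $y$ gives exactly the stated condition.

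I expect the main obstacle to be the bookkeeping in the second and third steps: justifying cleanly that faithfulness can be tested on this restricted class of maps (the quotients identifying two elements), and then tracking what $- \otimes_N A$ does to such a quotient. For the first part I would lean on the fact that every non-monic map in a presheaf topos factors through a proper quotient of one of its finitely generated subobjects, so faithfulness is equivalent to $f^*$ never sending such a proper quotient to an iso; and a proper quotient of a finitely generated $N$-set is refined by one of the form $X \to X/(x \sim y)$ for suitable $x \ne y$. For the second part, the key computational fact is that $- \otimes_N A$ preserves coequalizers, so $(X/(x\sim y)) \otimes_N A$ is the coequalizer of the two maps $A \rightrightarrows X \otimes_N A$ picking out $x \otimes -$ and $y \otimes -$; this map out of $X \otimes_N A$ is an isomorphism iff those two composites already agree, which is exactly ``$x \otimes a = y \otimes a$ for all $a \in A$''. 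Assembling these, $f^*$ fails to be faithful iff there exist $X$, $x \ne y$ with $x \otimes a = y \otimes a$ for all $a$, which is the negation of the displayed condition, completing the proof.
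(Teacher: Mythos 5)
Your proof is correct, but it takes a genuinely different route from the paper's. The paper composes $f$ with the canonical point $\Set \to \Setswith{M}$ (whose inverse image, the forgetful functor, is faithful), so that $f$ is a surjection iff the composite point of $\Setswith{N}$ is; it then invokes the classical fact that the unit of an adjunction is monic iff the left adjoint is faithful, and the unit of that composite point at $X$ is exactly $x \mapsto (a \mapsto x \otimes a)$, giving the stated condition immediately. You instead work directly with $f^* = - \otimes_N A$ and test faithfulness on the quotients $X \to X/(x \sim y)$; the decisive computation — that $-\otimes_N A$ preserves coequalizers, so this quotient is sent to an isomorphism precisely when the two maps $A \rightrightarrows X \otimes_N A$, $a \mapsto x \otimes a$ and $a \mapsto y \otimes a$, already agree — is correct and does yield both directions of the equivalence (given $g \neq h$ with $f^*(g) = f^*(h)$, evaluate at a point $y_0$ where they differ to produce the offending pair $x = g(y_0) \neq h(y_0) = y$; conversely such a pair gives distinct maps $N \rightrightarrows X$ identified by $f^*$). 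The paper's argument buys brevity and avoids any discussion of conservativity; yours is more hands-on and makes the role of the parallel pair $N \rightrightarrows X$ explicit. The one soft spot is your scaffolding in the middle: the appeals to reflection of monomorphisms and epimorphisms, and to factoring non-monic maps through proper quotients of finitely generated subobjects, are more machinery than the argument needs and are stated somewhat loosely (faithful $\Leftrightarrow$ conservative uses that $f^*$ preserves finite limits and images, not just colimits). None of this is fatal — your final assembly via the coequalizer computation stands on its own — but the proof would be cleaner if you went straight from ``$f^*$ not faithful'' to a parallel pair out of $N$ and skipped the detour through subobjects.
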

\begin{proof}
Composing with the canonical essential surjective point of $\Setswith{M}$, we see that $f$ is a surjection if and only if the composite point $\Set \to \Setswith{N}$ is. The stated condition is a translation of the requirement that the unit of this point is a monomorphism. The statement then follows from the classical result that the unit of an adjunction is a monomorphism if and only if the left adjoint is faithful.
\end{proof}

Finding necessary and sufficient conditions for $f^*$ to be full is difficult, but fortunately we have other ways to characterize hyperconnected morphisms.

\begin{proposition} \label{prop:characterization-hc}
Let $f : \Setswith{M} \to \Setswith{N}$ be the geometric morphism corresponding to the $\fllr{N}{M}$-set $A$. Then $f$ is hyperconnected if and only if the condition of Lemma \ref{lem:characterization-surj} is satisfied and every sub-$M$-set of $A$ is of the form $I \otimes_N A$ for some right ideal $I \subseteq N$.
\end{proposition}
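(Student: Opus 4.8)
The plan is to lean on the standard fact refining Definition \ref{dfn:gmbasics}: a geometric morphism $f$ is hyperconnected if and only if $f^*$ is full and faithful \emph{and} the image of $f^*$ is closed under subobjects (not merely subquotients) in the domain topos; given full-faithfulness, reducing closure under subquotients to closure under subobjects is routine, using that $f^*$ preserves coequalizers and reflects equivalence relations (cf.\ \cite[\S A4.6]{Ele}). Under Theorem \ref{thm:N-M-sets} we have $f^* = - \otimes_N A$ with $f^*(N) = A$, and the subobjects of the right $N$-set $N$ are exactly the right ideals $I \subseteq N$, with $f^*(I) = I \otimes_N A$ (the comparison $I \otimes_N A \hookrightarrow A$ being monic since $A$ is flat). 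So ``every sub-$M$-set of $A$ has the form $I \otimes_N A$'' should be precisely the translation of ``every subobject of $f^*(N)$ lies in the image of $f^*$'', while the surjectivity clause supplies faithfulness of $f^*$.

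For the forward direction, if $f$ is hyperconnected then $f^*$ is faithful, so $f$ is a surjection and Lemma \ref{lem:characterization-surj} applies; moreover, any sub-$M$-set $C \subseteq A = f^*(N)$, being a subobject of an object in the image of $f^*$, is by closure under subobjects and full-faithfulness of $f^*$ isomorphic over $f^*(N)$ to $f^*(I)$ for a right ideal $I \subseteq N$, so $C \cong I \otimes_N A$.

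For the converse, assume both hypotheses. I would first show the image of $f^*$ is closed under subobjects: given $C \rightarrowtail f^*(Y)$, take the canonical cover $F \twoheadrightarrow Y$ with $F = \coprod_{y \in Y} N$, so $f^*(F) = \coprod_y A$, and pull $C$ back to $C' \rightarrowtail f^*(F)$. Subobjects of a coproduct decompose componentwise, so $C' = \coprod_y C_y$ with each $C_y \subseteq A$; the hypothesis gives $C_y = I_y \otimes_N A$, whence $C' = \big(\coprod_y I_y\big) \otimes_N A = f^*(J)$ over $f^*(F)$, where $J = \coprod_y I_y \rightarrowtail F$ is a subobject. Since $f^*$ preserves epimorphisms, the pullback $C' \to C$ is epic, so $C$ is the image of $f^*(J) \to f^*(F) \to f^*(Y)$; as $f^*$ preserves image factorizations, this image is $f^*$ of the image of $J \to F \to Y$, a subobject of $Y$. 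Next I would deduce that $f^*$ is full: a morphism $f^*(Y) \to f^*(Y')$ amounts to a subobject of $f^*(Y) \times f^*(Y') = f^*(Y \times Y')$ whose first projection is invertible; closure under subobjects rewrites it as $f^*(\Gamma)$ for $\Gamma \rightarrowtail Y \times Y'$, and since $f^*$ is faithful (by the surjectivity hypothesis) and toposes are balanced, $f^*$ reflects isomorphisms, so the first projection of $\Gamma$ is already invertible; thus $\Gamma$ is the graph of some $h' \colon Y \to Y'$ with $f^*(h')$ the given morphism. Together with faithfulness this makes $f^*$ full and faithful, and with closure under subobjects it follows that $f$ is hyperconnected.

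The main obstacle is obtaining fullness of $f^*$, for which there is no convenient direct algebraic handle (as noted just before the statement); the plan routes around this by extracting fullness from closure under subobjects plus faithfulness, via the identification of morphisms with graph-subobjects of products. A secondary point needing care is the passage from the hypothesis on sub-$M$-sets of the single object $A$ to subobjects of an arbitrary $f^*(Y)$, which the coproduct cover handles using that $f^*$ preserves coproducts, epimorphisms and image factorizations.
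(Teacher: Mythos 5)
Your proof is correct and follows essentially the same route as the paper: the forward direction identifies sub-$M$-sets of $A$ with right ideals using closure of the image of $f^*$ under subobjects, and the converse uses the cover of an arbitrary $N$-set by copies of $N$, the componentwise decomposition of subobjects of coproducts, and the preservation of image factorizations by $f^*$. The only difference is that where you re-derive fullness of $f^*$ from faithfulness plus closure under subobjects via graph subobjects (and sketch the reduction of subquotients to subobjects), the paper simply invokes \cite[Proposition A4.6.6]{Ele}, which characterizes hyperconnected morphisms as exactly the surjections whose inverse image functor is closed under subobjects.
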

\begin{proof}
If $f$ is hyperconnected, then $f$ is certainly a surjection. Moreover, since $f^*$ is full and faithful and closed under subobjects, every monomorphism $A' \hookrightarrow A$ must be of the form $f^*(g)$ for some right $N$-set homomorphism $g: X \to N$. But $f^*$ preserves epimorphisms and monomorphisms, which means that if we take the epi-mono factorization of $g$, the epimorphic part must be sent to an isomorphism by $f^*$, so the monomorphic part induces the same subobject. The conclusion follows, since sub-right-$N$-sets of $N$ are precisely right ideals.

Conversely, given the conditions on $A$, we know from Lemma \ref{lem:characterization-surj} that $f$ is a surjection; we shall show that $f^*$ is closed under subobjects. Indeed, given a right $N$-set $X$, consider the image under $f^*$ of a cover of $X$ by copies of $N$, which simplifies to $\coprod_{k \in K} A \too f^*(X)$. Given a subobject $Z$ of $f^*(X)$ in $\Setswith{M}$, we can pull back the cover to obtain a cover of $Z$ of the form $\coprod_{k \in K} I_k \otimes_N A \too Z$ (taking advantage of the fact that subobjects of coproducts are coproducts of subobjects). This lifts to a morphism $\coprod_{k \in K} I_k \hookrightarrow \coprod_{k \in K} N \too X$; applying $f^*$ to the epi-mono factorization of this composite produces the desired presentation of $Z$. We use the characterization of hyperconnected morphisms from \cite[Proposition A4.6.6]{Ele} as those surjections whose inverse image functors are closed under subobjects to conclude that $f$ is hyperconnected.
\end{proof}

\section{The (terminal-connected, \'etale) factorization}
\label{sec:tc-etale}

There is a well-known (connected and locally connected, \'etale) factorization system for locally connected morphisms, constructed for a given locally connected morphism $f$ by slicing the codomain topos over the object $f_!(1)$; see \cite[Lemma C3.3.5]{Ele}. This factorization system extends to essential geometric morphisms, as observed by Caramello in \cite[\S4.7]{caramello-denseness}. Recent work of Osmond \cite[Theorem 5.4.10]{osmond-2geometries} demonstrates how this can be extended to a factorization system for arbitrary geometric morphisms, after replacing \'etale geometric morphisms by more general pro-\'etale geometric morphisms.

We begin from the following definitions, which appear as \cite[Definitions 5.2.3 and 5.3.3]{osmond-2geometries}, respectively.

\begin{dfn}
A geometric morphism $f:\Fcal \to \Ecal$ is said to be \textbf{\'etale} if $\Fcal$ is equivalent to $\Ecal/X$ for some object $X$, and $f$ factors as the equivalence followed by the canonical geometric morphism $\Ecal/X \to \Ecal$; we refer to the latter as the \textit{\'etale geometric morphism corresponding to $X$}.

On the other hand, a geometric morphism $f:\Fcal \to \Ecal$ is said to be \textbf{terminal-connected} if there is a bijection $\Hom_{\Fcal}(1,f^*(X)) \cong \Hom_{\Ecal}(1,X)$, natural in $X$.
\end{dfn}

A geometric morphism into $\Set$ is terminal-connected if and only if it is connected. Indeed, it is clear that connected morphisms are always terminal-connected morphisms; conversely, for $p:\Ecal \to \Set$ the global sections morphism, we have
\[\Hom_{\Ecal}(p^*(X),p^*(Y)) \cong \prod_{x \in X}\Hom_{\Ecal}(1,p^*(Y)) \cong \prod_{x \in X}\Hom_{\Set}(1,Y) \cong \Hom_{\Set}(X,Y),\]
naturally in $X$ and $Y$, whence $p^*$ is full and faithful. Terminal-connectedness is usually defined only for the class of essential geometric morphisms. Indeed, for such morphisms we can recover the definition stated in \cite[\S4.7]{caramello-denseness} by adjointness, as Osmond observes in \cite{osmond-2geometries}.

\begin{lemma}[{\cite[Proposition 5.3.4]{osmond-2geometries}}]
\label{lem:esstc}
An essential geometric morphism $f:\Fcal \to \Ecal$ is terminal-connected if and only if $f_!$ preserves the terminal object.
\end{lemma}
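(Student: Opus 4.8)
The plan is to prove both directions by exploiting the adjunction chain $f_! \dashv f^* \dashv f_*$ and the fact that the terminal object $1_{\Ecal}$ corepresents global sections. First I would unpack the definition: $f$ is terminal-connected precisely when the canonical comparison map $\Hom_{\Ecal}(1_{\Ecal},X) \to \Hom_{\Fcal}(1_{\Fcal},f^*X)$ is a bijection natural in $X$. The map in question is the one induced by applying $f^*$ and postcomposing with the inverse image's action on the canonical morphism $1_{\Fcal} \to f^*(1_{\Ecal})$; since $f^*$ preserves finite limits, $f^*(1_{\Ecal}) \cong 1_{\Fcal}$, so this canonical morphism is an isomorphism and the comparison map is just $f^*$ applied to morphisms out of the terminal object.

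For the ``if'' direction, suppose $f_!$ preserves the terminal object, i.e.\ $f_!(1_{\Fcal}) \cong 1_{\Ecal}$. Then for any $X$ in $\Ecal$ I compute
\begin{equation*}
\Hom_{\Fcal}(1_{\Fcal}, f^*X) \cong \Hom_{\Ecal}(f_!(1_{\Fcal}), X) \cong \Hom_{\Ecal}(1_{\Ecal}, X),
\end{equation*}
using the $(f_! \dashv f^*)$-adjunction and then the hypothesis, and these isomorphisms are natural in $X$. One then checks that the resulting bijection agrees with the canonical comparison map (this is a routine naturality/triangle-identity verification, tracing the unit of the adjunction), giving terminal-connectedness.

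For the ``only if'' direction, suppose $f$ is terminal-connected. Then for every $X$ we have $\Hom_{\Ecal}(f_!(1_{\Fcal}),X) \cong \Hom_{\Fcal}(1_{\Fcal},f^*X) \cong \Hom_{\Ecal}(1_{\Ecal},X)$, naturally in $X$; by the Yoneda lemma this forces $f_!(1_{\Fcal}) \cong 1_{\Ecal}$, so $f_!$ preserves the terminal object. The one subtlety to address is that Yoneda gives an abstract isomorphism, and one should confirm it is the canonical comparison morphism $f_!(1_{\Fcal}) \to 1_{\Ecal}$ (the transpose of $1_{\Fcal} \to f^*(1_{\Ecal}) \cong 1_{\Fcal}$, i.e.\ simply the unique map to the terminal object), but since any map to $1_{\Ecal}$ is unique this is automatic. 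The main (and only mild) obstacle is bookkeeping: making sure that the composite bijections constructed from the adjunction really are the same as the canonical comparison map appearing in the definition of terminal-connectedness, rather than merely some bijection; this is handled by chasing the triangle identities for $f_! \dashv f^*$ together with the fact that $f^*$ preserves the terminal object.
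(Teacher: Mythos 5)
Your proof is correct. The paper does not prove this lemma itself (it only cites Osmond), but your argument --- transposing along the adjunction $f_! \dashv f^*$ and invoking the Yoneda lemma --- is the standard proof and is exactly what is intended; moreover, since the paper's definition of terminal-connectedness only requires the existence of \emph{some} bijection natural in $X$, your careful verification that the composite bijection agrees with the canonical comparison map, while harmless, is not even strictly necessary here.
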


Meanwhile, the following result from \cite{Ele} suggests that we should think of \'etale geometric morphisms over a topos $\Ecal$ as corresponding to discrete internal locales (rather than merely as objects of the topos).

\begin{lemma}[{\cite[Lemma C3.5.4]{Ele}}]
A geometric morphism is \'etale if and only if it is localic and its inverse image functor is logical (so preserves exponential objects and the subobject classifier). In particular, \'etale geometric morphisms are essential (in fact, locally connected).
\end{lemma}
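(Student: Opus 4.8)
The plan is to prove the two implications separately. ``\'Etale $\Rightarrow$ localic with $f^*$ logical'' is the routine direction; the converse carries the content; and the ``in particular'' clause drops out of the analysis of the canonical morphism $\Ecal/X \to \Ecal$. So suppose first that $f$ is \'etale, meaning up to equivalence that $\Fcal = \Ecal/X$ with $f$ the canonical projection and $f^* = (-)\times X$ the pullback functor. That $f^*$ is logical is standard (see, e.g., \cite{MLM} or \cite{Ele}): it has the left adjoint $\Sigma_X$ given by postcomposition, so preserves finite limits, and one checks directly that it preserves exponentials and the subobject classifier --- essentially the content of the ``fundamental theorem of topos theory'' for $\Ecal/X$. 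For localicness, observe that each object $a \colon A \to X$ of $\Ecal/X$ sits as a subobject of $f^*(A) = (A \times X \to X)$ via the monomorphism $(\id_A, a) \colon A \to A \times X$ over $X$; hence every object of $\Fcal$ is a subobject --- in particular a subquotient --- of something in the image of $f^*$. Finally $\Ecal/X \to \Ecal$ carries adjoints $\Sigma_X \dashv X^* \dashv \Pi_X$ with $\Sigma_X$ satisfying Frobenius reciprocity, so it, and hence every \'etale morphism, is locally connected and a fortiori essential.

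For the converse, suppose $f$ is localic with $f^*$ logical. First, $f^*$ has not only the right adjoint $f_*$ but also a left adjoint $f_!$: indeed $f^*$ preserves small colimits (having the right adjoint $f_*$), and it preserves small limits too, since small products can be built out of exponentials, finite limits and colimits, all of which $f^*$ preserves --- so $f$ is essential. Put $X := f_!(1_\Fcal)$. Because $f^*$ preserves exponentials, $f_!$ satisfies Frobenius reciprocity, and a short adjunction computation then yields a natural isomorphism $f_* f^*(B) \cong B^{f_!(1_\Fcal)}$ for $B$ in $\Ecal$. Now use the correspondence between localic $\Ecal$-toposes and internal locales in $\Ecal$: writing $\Fcal \simeq \Sh_\Ecal(L)$ over $\Ecal$, the internal frame $\mathcal{O}(L)$ is recovered as $f_*(\Omega_\Fcal)$, and since $f^*$ logical gives $\Omega_\Fcal \cong f^*(\Omega_\Ecal)$, we obtain $\mathcal{O}(L) \cong f_* f^*(\Omega_\Ecal) \cong \Omega_\Ecal^{\,X}$, the frame of opens of the discrete locale on $X$. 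Once one checks that this isomorphism respects the frame operations --- which it does, by naturality of the isomorphisms involved --- it follows that $\Fcal \simeq \Sh_\Ecal(\mathrm{disc}(X)) \simeq \Ecal/X$ over $\Ecal$, so $f$ is the \'etale morphism corresponding to $X$.

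The main obstacle is the last part of the converse: upgrading ``$f^*$ is logical'' to the statement that the internal locale $L$ is \emph{discrete}. This needs the Frobenius identity for $f_!$ and, more delicately, the verification that the isomorphism $\mathcal{O}(L) \cong \Omega_\Ecal^{\,X}$ is one of internal frames and not merely of underlying objects. An alternative presentation sidesteps the internal-locale machinery: the unit $1 \to f^* f_!(1_\Fcal) = f^*(X)$ is a global point of $f^*(X)$, which classifies a geometric morphism $g \colon \Fcal \to \Ecal/X$ over $\Ecal$; since $f$ is localic and factors as $(\Ecal/X \to \Ecal)\circ g$, the morphism $g$ is localic, and one then shows $g$ is an equivalence --- but this reproduces essentially the same computation and is no shorter. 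It is worth recording that localicness genuinely cannot be dropped: for a nontrivial group $G$, the global-sections morphism $\Setswith{G} \to \Set$ is essential with logical inverse image (the trivial-action functor $\Delta$), yet it is neither localic nor \'etale.
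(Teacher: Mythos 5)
The paper offers no proof of this lemma; it is quoted verbatim from Johnstone \cite[Lemma C3.5.4]{Ele}, and your argument is a correct reconstruction of essentially the standard proof given there: the easy direction via logicality of $X^* = (-)\times X$ together with the graph monomorphism $A \rightarrowtail A \times X$ over $X$ (which gives localicness) and the adjoint triple $\Sigma_X \dashv X^* \dashv \Pi_X$ (which gives local connectedness, hence essentialness); the converse by producing $f_!$ from the Special Adjoint Functor Theorem (a logical $f^*$ with a right adjoint preserves all small limits, since small products are built from exponentials, finite limits and coproducts), invoking Frobenius reciprocity to get $f_*f^*(B) \cong B^{f_!(1)}$, and identifying the internal frame $f_*(\Omega_\Fcal) \cong \Omega_\Ecal^{\,f_!(1)}$ with that of the discrete locale on $f_!(1)$. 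Your closing counterexample (for a nontrivial group $G$, the functor $\Delta\colon \Set \to \Setswith{G}$ is logical yet the global sections morphism of $\Setswith{G}$ is hyperconnected, not \'etale) correctly confirms that localicness cannot be dropped.
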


\subsection{The essential case}

Given an essential geometric morphism $f:\Fcal \to \Ecal$, there is a factorization
\[\begin{tikzcd}
\Fcal \ar[r, "g"] & \Ecal/f_!(1) \ar[r, "h"] & \Ecal,
\end{tikzcd}\]
where both factors are essential, $g$ is terminal-connected and $h$ is the local homeomorphism corresponding to $f_!(1)$. This \textbf{(terminal-connected, \'etale) factorization} is again unique up to compatible equivalence of the intermediate topos, so we may refer to $g$ as the \textit{terminal-connected part of $f$} and to $h$ as the \textit{\'etale part of $f$}.

\begin{proposition}
\label{prop:finfib}
Let $f : \PSh(\Ccal) \to \PSh(\Dcal)$ be an essential geometric morphism induced by a functor $F : \Ccal \to \Dcal$. Then $F$ has a factorization $\Ccal \to \Bcal \to \Dcal$ into an final functor followed by a discrete fibration (unique up to equivalence). Further, the induced factorization
\begin{equation*}
\PSh(\Ccal) \to \PSh(\Bcal) \to \PSh(\Dcal)
\end{equation*}
coincides with the (terminal-connected, \'etale) factorization of $f$.
\end{proposition}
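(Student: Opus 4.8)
The plan is to verify the three assertions in turn: existence of the (final, discrete fibration) factorization of $F$; uniqueness up to equivalence; and the identification of the induced topos-level factorization with the (terminal-connected, \'etale) factorization of $f$. For the first assertion, I would take the standard construction: given $F:\Ccal \to \Dcal$, form the comma-type category $\Bcal$ whose objects are pairs $(C, d)$ with $C$ in $\Ccal$ and $d: F(C) \to D$ a morphism of $\Dcal$ --- equivalently, realize $\Bcal$ as the full image of the composite $\Ccal \to \PSh(\Dcal)$ landing in the category of elements of the presheaf $\colim$ of representables along $F$, so that $\Bcal \simeq \int_{\Dcal} L$ where $L = \pi_0 \circ F$ is the presheaf on $\Dcal$ obtained by left Kan extension of the terminal presheaf on $\Ccal$ along $F\op$, i.e.\ $L = f_!(1)$ viewed via Proposition \ref{prop:slice}. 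The first factor $\Ccal \to \Bcal$ is then final essentially by construction (its fibres are connected, since $L$ is built as a reflexive-coequalizer-style quotient), and the second factor $\Bcal \to \Dcal$ is a discrete fibration because $\Bcal$ is a category of elements of a presheaf on $\Dcal$. Since the paper only needs this up to equivalence, I would phrase $\Bcal := \int_{\Dcal} f_!(1)$ directly, which makes the discrete-fibration half immediate.

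For uniqueness up to equivalence: discrete fibrations over $\Dcal$ are equivalent to presheaves on $\Dcal$, and a factorization $\Ccal \xrightarrow{G} \Bcal' \xrightarrow{P} \Dcal$ with $P$ a discrete fibration corresponds to a presheaf $X'$ on $\Dcal$ together with a map from the terminal presheaf of $\Ccal$ to $G^*X'$; since $G$ final forces $G^*$ to be full, faithful and to reflect the terminal object (finality is exactly the condition that $\colim_{\Ccal} G^* \cong \colim_{\Bcal'}$, equivalently that $G^*$ preserves and reflects $\pi_0$ suitably), such $X'$ must be (isomorphic to) $f_!(1)$, giving the comparison equivalence $\Bcal' \simeq \Bcal$. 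I would lean on the classical fact (e.g.\ a final/discrete-fibration orthogonal factorization system on $\Cat$) rather than reprove orthogonality from scratch.

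For the main point --- that the induced topos-level factorization is the (terminal-connected, \'etale) one --- I would argue as follows. By Proposition \ref{prop:slice} and Corollary \ref{crly:idcomplete}, $\PSh(\Bcal) = \PSh(\int_{\Dcal} f_!(1)) \simeq \PSh(\Dcal)/f_!(1)$, and under this equivalence the essential geometric morphism $\PSh(\Bcal) \to \PSh(\Dcal)$ induced by the discrete fibration $\Bcal \to \Dcal$ is precisely the canonical (local homeomorphism) morphism $\PSh(\Dcal)/f_!(1) \to \PSh(\Dcal)$; this is exactly the \'etale part required by the definition of the (terminal-connected, \'etale) factorization recalled just before the proposition. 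It remains to check that the first factor $g: \PSh(\Ccal) \to \PSh(\Bcal)$, induced by the final functor $\Ccal \to \Bcal$, is terminal-connected. By Lemma \ref{lem:esstc} it suffices to show $g_!$ preserves the terminal object; but $g_!$ is left Kan extension along $(\Ccal \to \Bcal)\op$, and preservation of the terminal object (i.e.\ $\colim$ over the relevant fibres being a singleton) is precisely the statement that $\Ccal \to \Bcal$ is a final functor. Finally, uniqueness of the (terminal-connected, \'etale) factorization up to compatible equivalence of the intermediate topos (recalled before the proposition) lets me conclude that this factorization coincides with it.

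I expect the main obstacle to be bookkeeping around the two competing notions of ``final functor'': the $\Cat$-theoretic one ($\colim$-preserving in all targets) versus the topos-theoretic condition ``$g_!$ preserves $1$'' that Lemma \ref{lem:esstc} wants. The cleanest route is to note that for a functor $G: \Ccal \to \Bcal$, the induced $g_! : \PSh(\Ccal) \to \PSh(\Bcal)$ sends the terminal presheaf $1$ to the presheaf $B \mapsto \pi_0(G/B)$ (the comma category), so $g_!(1) \cong 1$ iff every such comma category is connected, which is exactly classical finality of $G$; then one must also observe that finality is what makes $\Ccal \to \Bcal = \int_{\Dcal} f_!(1)$ the correct first factor, i.e.\ that $f_!(1)$ really is the presheaf $D \mapsto \pi_0(F/D)$, which again follows from the formula for the left Kan extension $f_! = \mathrm{Lan}_{F\op}$ applied to $1$. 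Once this dictionary is set up, the rest is formal.
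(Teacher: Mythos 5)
Your proposal is correct and follows essentially the same route as the paper: both take the intermediate category to be $\int_{\Dcal} f_!(1)$, invoke Street--Walters (or the classical comprehensive factorization on $\Cat$) for existence and uniqueness of the (final, discrete fibration) factorization, identify the second factor as the \'etale morphism $\PSh(\Dcal)/f_!(1) \to \PSh(\Dcal)$ via Proposition \ref{prop:slice}, and verify terminal-connectedness of the first factor via Lemma \ref{lem:esstc}. The only difference is cosmetic: you check $g_!(1) \cong 1$ by the comma-category formula for the left Kan extension, whereas the paper identifies $g_!$ directly as $X \mapsto f_!(X \to 1)$ valued in the slice; these are the same computation.
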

\begin{proof}
In \cite{street-walters} it is shown that each functor can be factorized as an initial functor followed by a discrete opfibration. By dualizing we get a factorization of a functor into a final functor followed by a discrete fibration. To show that the induced factorization coincides with the (terminal-connected, \'etale) factorization, we write out the factorization explicitly below.

Using the notations from Subsection \ref{ssec:categories-of-elements}, we may consider the factorization:
\begin{equation}
\label{eq:finfib}
\begin{tikzcd}[row sep = small]
\Ccal \ar[r] & \int_{\Dcal} f_!(1) \ar[r] & \Dcal, \\
C \ar[d, "t"] \ar[r, mapsto] & (F(C), x \cdot Ft) \ar[d, "Ft"] \ar[r, mapsto] & F(C) \ar[d, "Ft"] \\
C' \ar[r, mapsto] & (F(C'),x) \ar[r, mapsto] & F(C')
\end{tikzcd}
\end{equation}
where $\int_{\Dcal} f_!(1)$ the category of elements of the object $f_!(1)$ in $\PSh(\Dcal)$, the right-hand functor is the forgetful functor and $x \in f_!(1)(F(C'))$ corresponds to the morphism $\yon(F(C')) \cong f_!(C') \xrightarrow{f_!(!)} f_!(1)$. We omit the proof of the respective properties and uniqueness, although we refer the reader to the original reference for the dual factorization of a functor into an initial functor followed by a discrete opfibration in \cite{street-walters}; we shall use this dual factorization in the next section. 

To see that this produces a (terminal-connected, \'etale) factorization of $f$, we employ Proposition \ref{prop:slice} to observe that the right-hand factor is \'{e}tale, since the extra left adjoint of the geometric morphism induced by the projection functor is identified under the equivalence with the forgetful functor from the slice topos. Meanwhile, the geometric morphism induced by the left-hand factor has left adjoint sending a presheaf $X$ on $\Ccal$ to the object $f_!(X \to 1)$ of $\PSh(\Dcal)/f_!(1)$, whence it is terminal-connected by Lemma \ref{lem:esstc}.
\end{proof}

Now suppose $f: \Setswith{M} \to \Setswith{N}$ is an essential geometric morphism induced by a semigroup homomorphism $\phi : M \to N$, and let $e:= \phi(1)$. In applying Proposition \ref{prop:finfib}, it will be convenient to substitute $\phi$ for the induced functor $\phi:M \to \check{N}$. Then $\Setswith{N}/f_!(1)$ is the category of presheaves on the category of elements $\int_{\check{N}} f_!(1)$, and the terminal-connected part of $f$ is the geometric morphism induced by the functor $M \to \int_{\check{N}} f_!(1)$ such that:
\begin{itemize}
\item the unique object of $M$ is sent to the object $(\underline{e},* \otimes e)$ of $\int_{\check{N}} f_!(1)$, where $* \otimes e \in 1 \otimes_M eN \cong f_!(1)$, and
\item the morphism corresponding to $m \in M$ is sent to the endomorphism indexed by $\phi(m)$ on $X$.
\end{itemize}
Denote the monoid of endomorphisms of $(\underline{e},* \otimes e)$ in $\int_{\check{N}} f_!(1)$ by $D$. More explicitly, since morphisms in this category are indexed by morphisms in $\check{N}$, we can identify $D$ with the following subsemigroup of $N$:
\begin{equation}
\label{eq:D}
D = \{ n \in N : ene = n ~\text{ and }~  * \otimes en = * \otimes e ~\text{ in }~ 1 \otimes_M eN \} \subseteq eNe.
\end{equation}
It will also be useful for us to consider the object $(\underline{1},1\otimes e)$ of $\int_{\check{N}} f_!(1)$; letting $E$ be the monoid of endomorphisms of $(\underline{1},1 \otimes e)$, we can identify $E$ with a submonoid of $N$:
\begin{equation}
\label{eq:E}
E = \{ n \in N : * \otimes en = * \otimes e \text{ in } 1 \otimes_M eN \}.
\end{equation}
In particular, $D = eEe$. Further, we have a diagram of semigroup homomorphisms,
\begin{equation}
\label{eq:5maps}
\begin{tikzcd}
M \ar[r, "\psi"] & D \ar[r] \ar[d, "\iota"'] & eNe \ar[d] \\
                 & E \ar[r, "\tau"']         &  N
\end{tikzcd},
\end{equation}
where the horizontal maps are monoid homomorphisms, the vertical maps are inclusions of subsemigroups which reduce to identities when $\phi$ is a monoid homomorphism, and both paths $M \to N$ compose to give $\phi$.

We will see later that the terminal-connected part of $f$ factors through both $\Setswith{D}$ and $\Setswith{E}$, in such a way that each of the factors is again terminal-connected. First, we give a different interpretation of $D$ and $E$ in terms of right-factorability.

\begin{dfn}
\label{dfn:rfactor}
Recall from \cite[Definition 2.9]{hr1} that a subset $S$ of a monoid $M$ is called \textbf{right-factorable} if whenever $x \in S$ and $y \in M$ with $xy \in S$, then $y \in S$; such a subset automatically contains the identity. Further, for an arbitrary subset $T$, we defined $\langle{T}\rangle\rangle_M \subseteq M$ to be the smallest right-factorable submonoid of $M$ containing $T$. We say that $\langle{T}\rangle\rangle_M$ is the submonoid of $M$ \textbf{right-factorably generated} by $T$.
\end{dfn}

\begin{lemma}
\label{lem:(M))}
In $1 \otimes_M eN$, we have $\ast \otimes en = \ast \otimes e$ if and only if $n \in \langle{\phi(M)}\rangle\rangle_N$. 
With the notation established above, we find that $E = \langle{\phi(M)}\rangle\rangle_{N}$, and similarly that $D = \langle{\phi(M)}\rangle\rangle_{eNe}$.
\end{lemma}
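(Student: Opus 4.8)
The plan is to identify the tensor product $1 \otimes_M eN$ concretely as a quotient of $eN$ and then read off when two elementary tensors are equal. First I would recall that $1 \otimes_M eN = (\ast \otimes -) : eN \to 1 \otimes_M eN$ is the coequalizer expressing $1 \otimes_M eN$ as the set of $M$-orbits of $eN$ under the right action $en \cdot m = en\phi(m)$ (using Lemma \ref{lem:esstensor}, the $[N,M)$-set attached to $f$ is $N e = Ne$, but for $f_!$ the relevant $(M,N)$-set is $eN$, acted on the right by $N$ and on the left by $M$ via $\phi$). So $\ast \otimes en = \ast \otimes e$ precisely when $en$ and $e$ lie in the same orbit, i.e.\ when there is a finite zig-zag $e = x_0, x_1, \dots, x_k = en$ in $eN$ with consecutive terms related by $x_i = x_{i+1}\phi(m)$ or $x_{i+1} = x_i \phi(m)$ for some $m \in M$.

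The key observation is that, because all these elements lie in the principal left ideal generated by $e$ and $e$ is idempotent acting as a left identity on $eN$, such a zig-zag can be straightened: one shows by induction on the zig-zag length that $\ast \otimes en = \ast \otimes e$ if and only if $en = \phi(m_1)\phi(m_2)\cdots$ — more precisely, that the set $\{\, n \in eNe : \ast\otimes en = \ast \otimes e\,\}$ (respectively $\{\, n \in N : \ast \otimes en = \ast \otimes e\,\}$) is exactly the right-factorable submonoid generated by $\phi(M)$. For the ``$\supseteq$'' direction: the set $S := \{ n : \ast \otimes en = \ast \otimes e \}$ obviously contains $\phi(M)$ (since $\ast \otimes e\phi(m) = \ast \otimes (e \cdot m) = \ast \otimes e$), is a submonoid, and is right-factorable — if $\ast \otimes en = \ast \otimes e$ and $\ast \otimes enn' = \ast \otimes e$ then $\ast \otimes e n' $... here one must be slightly careful, since right-factorability of $S$ requires that $n, nn' \in S \Rightarrow n' \in S$, which is where the \emph{left}-identity role of $e$ and the zig-zag structure genuinely enters; I expect this to follow because $\ast \otimes enn' = \ast \otimes en$ forces a zig-zag from $enn'$ to $en$, and one can show the ``cancellable'' prefix $en$ can be stripped, leaving a zig-zag from $n'$-translates down to something forcing $n' \in S$. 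For ``$\subseteq$'': conversely any element of $S$ arises from a zig-zag, and an induction on zig-zag length shows each such element lies in $\langle\langle \phi(M)\rangle\rangle$, using that $\langle\langle \phi(M)\rangle\rangle$ is right-factorable to handle the backward steps $x_{i+1} = x_i \phi(m)$.

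Once the displayed biconditional is established, the identifications $E = \langle\langle \phi(M)\rangle\rangle_N$ and $D = \langle\langle \phi(M)\rangle\rangle_{eNe}$ are immediate: $E$ is defined in \eqref{eq:E} as exactly $\{ n \in N : \ast \otimes en = \ast \otimes e\}$, and $D = eEe$ from \eqref{eq:D}, while $\langle\langle\phi(M)\rangle\rangle_{eNe} = eNe \cap \langle\langle \phi(M)\rangle\rangle_N$ since $e$ acts as a two-sided identity on $eNe$ and $\phi(M) \subseteq eNe$, and intersecting a right-factorable submonoid of $N$ with the submonoid $eNe$ yields the right-factorably generated submonoid there. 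The main obstacle is the careful zig-zag bookkeeping for right-factorability of $S$ in the ``$\supseteq$'' direction — making precise that the left-identity behaviour of $e$ lets one cancel prefixes along a zig-zag — and I would isolate that as the one genuinely non-routine step, possibly citing the analogous argument in \cite[Definition 2.9 ff.]{hr1} if the bookkeeping there already covers it.
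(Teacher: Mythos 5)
Your overall strategy --- compute equality in $1 \otimes_M eN$ as a zig-zag relation on $eN$ and match the class of $e$ against the right-factorable closure --- is essentially the paper's, which outsources the combinatorial core to \cite[Lemma 2.12]{hr1} (the closure $\langle{\phi(M)}\rangle\rangle_N$ equals the class of $1$ under the \emph{right} congruence generated by $1 \sim \phi(m)$). But there is a genuine error at the start: the $M$-action being quotiented out in $1 \otimes_M eN$ is the \emph{left} action $m \cdot x = \phi(m)x$, so the generating identifications are $\ast \otimes x = \ast \otimes \phi(m)x$, not $\ast \otimes x = \ast \otimes x\phi(m)$ as you write. This is not cosmetic: with your convention the class of $e$ computes the \emph{left}-factorable closure $\langle\langle{\phi(M)}\rangle_N$, which is genuinely different. (Take $N$ the bicyclic monoid $\langle u,v : uv=1\rangle$ and $\phi$ the inclusion of $\{u^k : k \in \Nbb\}$: left multiplication by $u$ identifies $v$ with $uv=1$, and indeed $\langle{\{u^k\}}\rangle\rangle_N = N$; right multiplication never connects $v$ to $1$, and $\langle\langle{\{u^k\}}\rangle_N = \{u^k\}$.) Relatedly, the step you flag as delicate --- right-factorability of $S = \{n : \ast\otimes en = \ast\otimes e\}$ --- becomes immediate once the handedness is fixed: because the generating identifications are left multiplications, the induced equivalence relation is a right congruence ($a \sim b$ implies $ac \sim bc$), so $en \sim e$ gives $enn' \sim en'$, whence $en' \sim enn' \sim e$. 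Your proposed mechanism of ``stripping the cancellable prefix $en$'' from a zig-zag is not valid in a general monoid and is not needed.

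The reduction of the claim for $D$ to the claim for $E$ also has a gap: you assert $\langle{\phi(M)}\rangle\rangle_{eNe} = eNe \cap \langle{\phi(M)}\rangle\rangle_N$, but your justification only yields the inclusion from left to right (the intersection is a right-factorable submonoid of $eNe$ containing $\phi(M)$, hence contains the closure computed in $eNe$); the reverse inclusion needs an argument, since the closure computed in $N$ may a priori be reached through intermediate elements outside $eNe$. The paper sidesteps this by observing that $eNe$ is a retract of $eN$ as left $M$-sets, so $1 \otimes_M eNe$ injects into $1 \otimes_M eN$ and the equation $\ast\otimes en = \ast\otimes e$ for $n \in eNe$ can be tested inside $1 \otimes_M eNe$, where the argument for $E$ applies verbatim. (Your route can be repaired by projecting a zig-zag in $N$ along $x \mapsto exe$ and checking it remains a zig-zag for the congruence on $eNe$, but that has to be said explicitly.)
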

\begin{proof}
By definition of equality in $1 \otimes_M eN$, for $n \in N$ we have $* \otimes e = * \otimes en$ if and only if $1 \sim_M n$, where ${\sim_M}$ is the right congruence generated by the basic relations $1 \sim \phi(m)$ for all $m \in M$. But by \cite[Lemma 2.12]{hr1}, we have $\langle{\phi(M)}\rangle\rangle_{N} = \{ n \in N : 1 \sim_M n \}$. In other words, $E = \langle{\phi(M)}\rangle\rangle_N$. To show the analogous result for $D$, note that $eNe$ is a retract of $eN$ (as left $M$-sets). The functor $1 \otimes_M -$ preserves retracts, in particular $1 \otimes_M eNe \subseteq 1 \otimes_M eN$. So for $n \in eNe$ the equation $\ast \otimes en = \ast \otimes e$ holds in $1 \otimes_M eN$ if and only if it holds in $1 \otimes_M eNe$. The proof that $D = \langle{\phi(M)}\rangle\rangle_{eNe}$ is now analogous to the above proof that $E = \langle{\phi(M)}\rangle\rangle_N$. 
\end{proof}

So the diagram \eqref{eq:5maps} can be written more explicitly as
\begin{equation}
\label{eq:5maps2}
\begin{tikzcd}
M \ar[r, "\psi"] & \langle{\phi(M)}\rangle\rangle_{eNe} \ar[r] \ar[d, "\iota"'] & eNe \ar[d] \\
                 & \langle{\phi(M)}\rangle\rangle_N \ar[r, "\tau"']         &  N
\end{tikzcd}.
\end{equation}

It turns out that $f$ is terminal-connected if and only if the inclusion $\tau$ in \eqref{eq:5maps2} is the identity.

\begin{corollary}
\label{cor:tc}
An essential geometric morphism $f : \Setswith{M} \to \Setswith{N}$ induced by a semigroup homomorphism $\phi:M \to N$ is terminal-connected if and only if $\langle{\phi(M)}\rangle\rangle_{N} = N$.
\end{corollary}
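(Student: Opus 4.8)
The plan is to reduce directly to the criterion of Lemma~\ref{lem:esstc}: the essential morphism $f$ is terminal-connected if and only if its extra left adjoint $f_!$ preserves the terminal object, i.e.\ $f_!(1) \cong 1$ in $\Setswith{N}$. By Lemma~\ref{lem:esstensor}, $f_!$ is the functor $-\otimes_M eN$, where $e = \phi(1)$ and $eN$ carries the left $M$-action through $\phi$ and the right $N$-action by multiplication; hence $f_!(1) = 1\otimes_M eN$, which is exactly the right $N$-set already studied in the construction of $E$ above and in Lemma~\ref{lem:(M))}.

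The key observation is that $f_!(1) = 1\otimes_M eN$ is a \emph{cyclic} right $N$-set, generated by $\ast \otimes e$: since $eN = \{en : n \in N\}$, every element has the form $\ast\otimes en = (\ast\otimes e)\cdot n$. For a cyclic $N$-set, having a fixed generator forces the whole action to be trivial and the underlying set to be a singleton; so $f_!(1)\cong 1$ if and only if $\ast \otimes e$ is a fixed point, that is, if and only if $\ast\otimes en = \ast\otimes e$ in $1\otimes_M eN$ for every $n\in N$.

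Finally I would invoke Lemma~\ref{lem:(M))}, which states that $\ast\otimes en = \ast\otimes e$ in $1\otimes_M eN$ exactly when $n\in\langle{\phi(M)}\rangle\rangle_N$. Combining the two previous steps, $f_!(1)\cong 1$ if and only if every $n\in N$ lies in $\langle{\phi(M)}\rangle\rangle_N$, i.e.\ if and only if $\langle{\phi(M)}\rangle\rangle_N = N$ --- equivalently, $E = N$, equivalently the inclusion $\tau$ of \eqref{eq:5maps2} is the identity. This yields the claimed equivalence.

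I do not anticipate a genuine obstacle here: the statement is a corollary of the machinery already set up, and the only points requiring a little care are (i) the correct identification of $f_!(1)$ with $1\otimes_M eN$ via Lemma~\ref{lem:esstensor}, and (ii) the elementary remark that a cyclic action with a fixed generator is terminal, which is what upgrades ``$\ast\otimes e$ is fixed'' to ``$f_!(1)\cong 1$''. As an alternative route one could argue through Proposition~\ref{prop:finfib}: the \'etale part of $f$ is the morphism $\Setswith{N}/f_!(1)\to\Setswith{N}$, which is an equivalence precisely when $f_!(1)\cong 1$, and one then concludes as before; this amounts to the same computation.
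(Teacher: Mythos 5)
Your proposal is correct and follows essentially the same route as the paper: identify $f_!(1)$ with $1\otimes_M eN$, observe that this cyclic right $N$-set is terminal precisely when $\ast\otimes e = \ast\otimes en$ for all $n\in N$, and conclude via Lemma~\ref{lem:(M))}. The paper's proof is just a terser version of the same argument.
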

\begin{proof}
Let $e = \phi(1)$. Then $f_!(1) = 1 \otimes_M eN$, whence $f$ is terminal-connected if and only if $1 \otimes_M eN \simeq 1$, which is to say that $* \otimes e = * \otimes en$ for all $n \in N$. By Lemma \ref{lem:(M))}, this is equivalent to $\langle{\phi(M)}\rangle\rangle_N = N$.
\end{proof}

\begin{example} \ 
\label{xmpl:tc}
\begin{enumerate}
\item Let $\Zbb$ be the group of integers under addition and let $\Nbb \subseteq \Zbb$ be the submonoid of natural numbers. For each $n \in \Nbb$, we have that $n + (-n) = 0$, and as a result $-n$ is contained in the right-factorable submonoid generated by $\Nbb$, and hence $\langle{\Nbb}\rangle\rangle_\Zbb = \Zbb$. It follows that the induced essential localic surjection $\Setswith{\Nbb} \to \Setswith{\Zbb}$ is terminal-connected.
\item More generally, let $G$ be a group and let $N \subseteq G$ be a submonoid such that $G = \{ a^{-1}b ~:~ a,b \in N \}$; for example, if $R$ is a valuation ring in a field $K$ then we may take $N = R-\{0\}$ and $G = K^*$ to be the respective multiplicative monoids of non-zero elements. Then for every $a,b \in N$ we have $a(a^{-1}b) = b$, and this shows that $\langle{N}\rangle\rangle_G = G$. So the induced geometric morphism $\Setswith{N} \to \Setswith{G}$ is terminal-connected.
\item Consider any ring $R$ as a monoid with its multiplication operation, and consider the subsemigroup $\{0\} \subseteq R$. Because $0 \cdot r = 0$ for all $r \in R$, we have that the right-factorable submonoid generated by $\{0\}$ is equal to $R$ itself. It follows that the induced essential geometric morphism $\Set \simeq \Setswith{\{0\}} \to \Setswith{R}$ is terminal-connected. 
\item Let $M$ be a commutative idempotent monoid. We denote the multiplication in $M$ by $\wedge$, and in this way we can view $M$ as a meet-semilattice. Let $N \subseteq M$ be a subsemigroup, i.e.\ a subset closed under $\wedge$. We can compute that $\langle{N}\rangle\rangle_M$ is then the upwards closure of $N$. So $\Setswith{N} \to \Setswith{M}$ is terminal-connected if and only if the upwards closure of $N \subseteq M$ is $M$.
\end{enumerate}
\end{example}

\begin{prop}
\label{prop:tc}
Given a semigroup homomorphism $\phi:M \to N$, let 
\begin{equation*}
f : \Setswith{M} \to \Setswith{N}
\end{equation*}
be the induced essential geometric morphism. Then the terminal-connected part of $f$ has (surjection, inclusion) factorization given by
\begin{equation}
\label{eq:tctc}
\begin{tikzcd}
\Setswith{M} \ar[r,"{k}"] & \Setswith{\langle{\phi(M)}\rangle\rangle_{eNe}} \ar[r,"{j}"] & \Setswith{N}/f_!(1).
\end{tikzcd}
\end{equation}
where $k$ is the essential surjection induced by the factor
\[\psi : M \to \langle{\phi(M)}\rangle\rangle_{eNe}\]
of $\phi$ from \eqref{eq:5maps2}, and $j$ is the essential inclusion induced by the inclusion of the monoid $\langle{\phi(M)}\rangle\rangle_{eNe}$ as a full subcategory of $\int_{\check{N}} f_!(1)$ on the single object $(\underline{e},* \otimes e)$. Moreover, both $k$ and $j$ are terminal-connected.
\end{prop}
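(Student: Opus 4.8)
The plan is to extract the terminal-connected part of $f$ from Proposition~\ref{prop:finfib}, write down its (surjection, inclusion) factorization explicitly, and then check that both factors remain terminal-connected using Corollary~\ref{cor:tc} and Lemma~\ref{lem:esstc}. Concretely: apply Proposition~\ref{prop:finfib} to the functor $\phi : M \to \check{N}$, recalling that $f_!(1) = 1\otimes_M eN$. As recorded just after that proposition, the terminal-connected part of $f$ is induced by the functor $G : M \to \int_{\check{N}} f_!(1)$ which sends the object of $M$ to $(\underline{e},*\otimes e)$ and sends $m\in M$ to the endomorphism of $(\underline{e},*\otimes e)$ indexed by $\phi(m)$. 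By \eqref{eq:D} and Lemma~\ref{lem:(M))}, the endomorphism monoid of $(\underline{e},*\otimes e)$ in $\int_{\check{N}} f_!(1)$ is exactly $D := \langle{\phi(M)}\rangle\rangle_{eNe}$, and since $\phi(M)\subseteq eNe$ we have $\phi(M)\subseteq D$. So $G$ factors as the monoid homomorphism $\psi : M \to D$, $m\mapsto\phi(m)$ (the map of \eqref{eq:5maps2}), followed by the full subcategory inclusion of $D$ into $\int_{\check{N}} f_!(1)$ on the object $(\underline{e},*\otimes e)$. Since the essential geometric morphism induced by a composite of functors is the composite of the induced morphisms, and using Proposition~\ref{prop:slice} to identify $\PSh(\int_{\check{N}} f_!(1))$ with $\Setswith{N}/f_!(1)$, the terminal-connected part of $f$ is $j\circ k$ with $k$ and $j$ exactly as in the statement.

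Next I would verify that this is the (surjection, inclusion) factorization. Since $k$ is induced by a monoid homomorphism, it is an essential surjection by Corollary~\ref{crly:surj-inj}; since $j$ is induced by a full, hence fully faithful, subcategory inclusion, it is an essential inclusion by Lemma~\ref{lem:essfacts}. By uniqueness of (surjection, inclusion) factorizations up to equivalence of the intermediate topos, \eqref{eq:tctc} is then the (surjection, inclusion) factorization of the terminal-connected part of $f$.

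It remains to see that both $k$ and $j$ are terminal-connected. For $k$: Corollary~\ref{cor:tc} applied to $\psi : M\to D$, together with $\psi(M)=\phi(M)$, reduces the claim to the identity $\langle{\phi(M)}\rangle\rangle_D = D$, i.e.\ to the idempotence of the right-factorable closure operation. Write $R := \langle{\phi(M)}\rangle\rangle_D$. Certainly $R\subseteq D$; conversely, $R$ is a submonoid of $eNe$ containing $\phi(M)$, and it is right-factorable in $eNe$, because for $x\in R$ and $y\in eNe$ with $xy\in R\subseteq D$ we first get $y\in D$ (since $D$ is right-factorable in $eNe$) and then $y\in R$ (since $R$ is right-factorable in $D$); minimality of $\langle{\phi(M)}\rangle\rangle_{eNe}$ then forces $D\subseteq R$, so $R=D$. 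For $j$: writing $g$ for the terminal-connected part of $f$, the equation $g = j\circ k$ gives $g_! \cong j_!\circ k_!$ on left adjoints; since $g$ and $k$ are terminal-connected, Lemma~\ref{lem:esstc} yields $g_!(1)\cong 1$ and $k_!(1)\cong 1$, whence $j_!(1)\cong j_!(k_!(1))\cong g_!(1)\cong 1$, and $j$ is terminal-connected by Lemma~\ref{lem:esstc} again.

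The main obstacle: practically all of this is bookkeeping once Proposition~\ref{prop:finfib} and the description~\eqref{eq:D} of $D$ are available. The one step needing a genuine (if short) argument is the idempotence $\langle{\phi(M)}\rangle\rangle_D = D$, which is precisely what keeps the surjective factor $k$ terminal-connected; terminal-connectedness of $j$ then comes for free from the compatibility $g_!\cong j_!\circ k_!$ with composition.
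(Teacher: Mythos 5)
Your proposal is correct and follows essentially the same route as the paper: read off the terminal-connected part from Proposition~\ref{prop:finfib}, identify the factorization via Lemma~\ref{lem:essfacts}/Corollary~\ref{crly:surj-inj}, get terminal-connectedness of $k$ from Corollary~\ref{cor:tc}, and of $j$ from $j_!(1)\cong j_!k_!(1)\cong 1$. Your explicit verification of the idempotence $\langle{\phi(M)}\rangle\rangle_{D}=D$ (using that $D$ is right-factorable in $eNe$) is a welcome expansion of a step the paper's proof dispatches rather tersely via Lemma~\ref{lem:(M))}.
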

\begin{proof}
That this is a canonical representation of the (surjection, inclusion) factorization follows from Lemma \ref{lem:essfacts}, so we only need to verify the last claim. We can deduce from Lemma \ref{lem:(M))} that $D$ can be identified with $\langle{\psi(M)}\rangle\rangle_{eNe}$, whence $k$ is terminal-connected by Corollary \ref{cor:tc}. To see that $j$ is terminal-connected, observe that $j_!(1) \cong j_!k_!(1) \cong 1$ since both $k$ and $j \circ k$ are terminal-connected.
\end{proof}

\begin{prop}
\label{prop:tc2}
With the same set-up as Proposition \ref{prop:tc}, the geometric inclusion $j$ in \eqref{eq:tctc} further factors as,
\begin{equation}
\label{eq:tctc2}
\begin{tikzcd}
\Setswith{\langle{\phi(M)}\rangle\rangle_{eNe}} \ar[r,"{j_1}"] & \Setswith{\langle{\phi(M)}\rangle\rangle_N} \ar[r,"{j_2}"] & \Setswith{N}/f_!(1).
\end{tikzcd}
\end{equation}
where $j_1$ is the essential inclusion induced by the inclusion of semigroups 
\begin{equation*}
\iota:\langle{\phi(M)}\rangle\rangle_{eNe} \to \langle{\phi(M)}\rangle\rangle_N 
\end{equation*}
from \eqref{eq:5maps2}, and $j_2$ is the essential inclusion induced by the inclusion of $\langle{\phi(M)}\rangle\rangle_N$ as a full subcategory of $\int_{\check{N}} f_!(1)$ on the object $(\underline{1},1 \otimes e)$. Again, both $j_1$ and $j_2$ are terminal-connected.
\end{prop}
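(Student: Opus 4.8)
The plan is to carry out the argument inside the idempotent-complete category $\int_{\check N} f_!(1)$, recalling that $\Setswith{N}/f_!(1) \simeq \Setswith{\int_{\check N} f_!(1)}$ (Proposition \ref{prop:slice}) and that $\int_{\check N} f_!(1)$ is idempotent complete (Corollary \ref{crly:idcomplete}). Recall also from the discussion preceding Proposition \ref{prop:tc} that in this category the object $(\underline 1, 1 \otimes e)$ has endomorphism monoid $E = \langle{\phi(M)}\rangle\rangle_N$, the object $(\underline e, \ast \otimes e)$ has endomorphism monoid $D = \langle{\phi(M)}\rangle\rangle_{eNe}$, and $D = eEe$. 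The first thing to check is that $(\underline e, \ast \otimes e)$ is a retract of $(\underline 1, 1 \otimes e)$: a short computation shows that the morphisms between them indexed by $e$ in the two directions compose to $\id_{(\underline e, \ast\otimes e)}$ in one order and to the idempotent endomorphism of $(\underline 1, 1\otimes e)$ indexed by $e$ in the other. Since $\int_{\check N} f_!(1)$ is idempotent complete, it follows that $\check E$ is realized as the full subcategory of $\int_{\check N} f_!(1)$ on the retracts of $(\underline 1, 1 \otimes e)$, with the object $\underline e$ of $\check E$ corresponding to $(\underline e, \ast\otimes e)$; and under this identification the functor $\check\iota : \check D \to \check E$ induced by the semigroup inclusion $\iota : D = eEe \hookrightarrow E$ coincides with the full subcategory inclusion of $\int_{\check N} f_!(1)$ on $(\underline e, \ast\otimes e)$. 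As that inclusion induces $j$ (Proposition \ref{prop:tc}), we conclude that $j = j_2 \circ j_1$.

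Next, $j_1$ and $j_2$ are essential because they are induced by functors. They are inclusions by Lemma \ref{lem:essfacts}: $j_2$ is induced by a full subcategory inclusion, and $j_1$ is induced by a semigroup inclusion of the shape $eEe \hookrightarrow E$, whose induced functor on idempotent completions is full and faithful by the same reasoning that shows $eNe \hookrightarrow N$ gives an inclusion in Corollary \ref{crly:surj-inj}. For terminal-connectedness, treat $j_1$ first via Corollary \ref{cor:tc}: it is terminal-connected if and only if $\langle{\iota(D)}\rangle\rangle_E = E$. Since $\phi(M) \subseteq eNe$ we have $\phi(M) \subseteq D = \iota(D)$, so it suffices to check $\langle{\phi(M)}\rangle\rangle_E = E$; and this holds because $E = \langle{\phi(M)}\rangle\rangle_N$ is right-factorable in $N$, so the operations building the right-factorable closure of $\phi(M)$ inside $N$ never leave $E$ and thus already produce $\langle{\phi(M)}\rangle\rangle_E$ — the same mechanism used for $k$ in Proposition \ref{prop:tc}. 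Finally, $j_2$ is terminal-connected by Lemma \ref{lem:esstc}: since $j_1$ and $j = j_2 \circ j_1$ are terminal-connected (the latter by Proposition \ref{prop:tc}), we get $(j_2)_!(1) \cong (j_2)_!\bigl((j_1)_!(1)\bigr) \cong (j_2 \circ j_1)_!(1) = j_!(1) \cong 1$.

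The main obstacle is the identification in the first paragraph: verifying that the composite $D \xrightarrow{\iota} E \hookrightarrow \int_{\check N} f_!(1)$ and the full subcategory inclusion on $(\underline e, \ast\otimes e)$ induce the same essential geometric morphism $j$. This is bookkeeping about how the retract $(\underline e, \ast\otimes e)$ of $(\underline 1, 1\otimes e)$, its endomorphism monoid $eEe$, and the $e$-indexed idempotent fit together in the idempotent completion, but it is the only genuine verification; the rest is a matter of invoking Lemmas \ref{lem:essfacts} and \ref{lem:esstc}, Corollaries \ref{crly:surj-inj} and \ref{cor:tc}, a brief right-factorability argument, and formal manipulation of the functors $(-)_!$.
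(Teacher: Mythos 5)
Your proof is correct and follows essentially the same route as the paper's: inclusions via Lemma \ref{lem:essfacts}, terminal-connectedness of $j_1$ via Corollary \ref{cor:tc} together with the observation that $\phi(M) \subseteq \langle{\phi(M)}\rangle\rangle_{eNe}$ right-factorably generates $\langle{\phi(M)}\rangle\rangle_N$, and terminal-connectedness of $j_2$ by the same $(j_2)_!(1) \cong j_!(1) \cong 1$ cancellation used for $j$ in Proposition \ref{prop:tc}. The only difference is that you explicitly carry out the idempotent-splitting bookkeeping identifying $(\underline{e}, \ast \otimes e)$ as the retract of $(\underline{1}, \ast \otimes e)$ splitting $e \in E$, and hence $j$ with $j_2 \circ j_1$ — a verification the paper leaves implicit.
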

\begin{proof}
Replacing all of the monoids with their idempotent completions and extending semigroup homomorphisms to functors accordingly, the fact that these geometric morphisms are inclusions is another application of Lemma \ref{lem:essfacts}. Because, $\langle{\phi(M)}\rangle\rangle_{eNe}$ contains $\phi(M)$, we have that $\langle{\phi(M)}\rangle\rangle_{eNe}$ right-factorably generates $\langle{\phi(M)}\rangle\rangle_N$, so using Corollary \ref{cor:tc} we see that $j_1$ is terminal-connected. That $j_2$ is terminal-connected follows just as for $j$ in the proof of Proposition \ref{prop:tc}.
\end{proof}

In summary, the geometric morphism $f : \Setswith{M} \to \Setswith{N}$ induced by a semigroup morphism $\phi : M \to N$, with $\phi(1) = e$, factors as
\begin{equation*}
\begin{tikzcd}
\Setswith{M} \ar[r,"{\text{tc surj.}}"] & \Setswith{\langle{\phi(M)}\rangle\rangle_{eNe}} \ar[d,"{\text{tc incl.}}"] & & \\
& \Setswith{\langle{\phi(M)}\rangle\rangle_N} \ar[r,"{\text{tc incl.}}"] & \Setswith{N}/f_!(1) \ar[d,"{\text{\'etale}}"] & \\
& & \Setswith{N},
\end{tikzcd}
\end{equation*}
where `tc' is short-hand for terminal-connected. If $\phi(1)=1$, then this reduces to
\begin{equation} \label{eq:tc-etale-monoid-morphism}
\begin{tikzcd}
\Setswith{M} \ar[r,"{\text{tc surj.}}"] & \Setswith{\langle{\phi(M)}\rangle\rangle_N} \ar[r,"{\text{tc incl.}}"] & \Setswith{N}/f_!(1) \ar[r,"{\text{\'etale}}"] & \Setswith{N}.
\end{tikzcd}
\end{equation}

Note that since $f$ is induced by a semigroup homomorphism, $f_!(1) = 1 \otimes_M \phi(1)N$ is an inhabited set, so the map $f_!(1) \to 1$ is an epimorphism, which implies that the induced geometric morphism $\Setswith{N}/f_!(1) \to \Setswith{N}$ is always a surjection. Indeed, since $\PSh(N)$ is hyperconnected, all of its non-initial objects are well-supported, so an \'{e}tale morphism from any non-degenerate topos will be a surjection.

We now consider conditions under which the morphism $f : \Setswith{M} \to \Setswith{N}$ we have been considering is \'etale. By orthogonality, if $f$ is \'etale, then the terminal-connected factor $\Setswith{M} \to \Setswith{N}/f_!(1)$ must be an equivalence, and by the above remark, $f$ must be a surjection. Since the (surjection, inclusion) factorization of $f$ is given by $\Setswith{M} \to \Setswith{eNe} \to \Setswith{N}$, with $e = \phi(1)$, it follows that $f$ is \'etale if and only if the inclusion part $\Setswith{eNe} \to \Setswith{N}$ is an equivalence and the surjection part $\Setswith{M} \to \Setswith{eNe}$ is \'etale. So we may without loss of generality restrict our attention to the case where $\phi$ is a monoid homomorphism.

\begin{proposition}
Let $f: \Setswith{M} \to \Setswith{N}$ be a geometric morphism induced by a monoid homomorphism $\phi : M \to N$. Then the terminal-connected surjection part in the factorization \eqref{eq:tc-etale-monoid-morphism} is an equivalence if and only if $\phi$ is injective and $\phi(M)$ is a right-factorable submonoid of $N$.
\end{proposition}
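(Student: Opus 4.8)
The plan is to recognise the terminal-connected surjection part $k$ of \eqref{eq:tc-etale-monoid-morphism} as the essential geometric morphism induced by an explicit monoid homomorphism, and then to read off when that morphism is an equivalence. Write $P:=\langle{\phi(M)}\rangle\rangle_N$. Since $\phi$ is a monoid homomorphism we have $e=\phi(1)=1$, hence $eNe=N$ and $\langle{\phi(M)}\rangle\rangle_{eNe}=P$; so by Proposition \ref{prop:tc}, $k$ is the essential geometric morphism induced by the corestriction $\psi\colon M\to P$ of $\phi$, equivalently by the functor $\check{\psi}\colon\check{M}\to\check{P}$ of Remark \ref{rmk:phicheck}. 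I would then use that an essential geometric morphism between presheaf toposes, induced by a functor between the associated idempotent-complete categories, is an equivalence iff that functor is an equivalence of categories — and in particular is an inclusion iff the functor is full and faithful (Lemma \ref{lem:essfacts}).

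For the ``if'' direction I would note that if $\phi$ is injective and $\phi(M)$ is right-factorable in $N$, then $\langle{\phi(M)}\rangle\rangle_N$ — being the smallest right-factorable submonoid of $N$ containing $\phi(M)$ — equals $\phi(M)$ itself, so $\psi\colon M\to P=\phi(M)$ is a surjective (by construction) and injective (since $\phi$ is) homomorphism of monoids, hence an isomorphism; therefore $k$ is an equivalence.

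For the ``only if'' direction I would assume $k$ is an equivalence, so $k$ is in particular an inclusion and hence $\check{\psi}$ is full and faithful by Lemma \ref{lem:essfacts}, and extract both conclusions by testing this at the identity idempotent. By Lemma \ref{lem:Mcheck}, $\End_{\check{M}}(\underline{1_M})=M$ and $\End_{\check{P}}(\underline{1_P})=P$, with $\check{\psi}$ acting on these hom-sets as $\psi$; faithfulness then gives that $\psi\colon M\to P$ is injective (so $\phi$ is injective), and fullness gives that $\psi\colon M\to P$ is surjective. Thus $\psi$ is a bijection, so $P=\psi(M)=\phi(M)$; and since $P=\langle{\phi(M)}\rangle\rangle_N$ is right-factorable in $N$ by construction, $\phi(M)$ is right-factorable in $N$, as required.

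I do not expect a genuine obstacle here. The only points that need a moment's care are that $\End_{\check{P}}(\underline{1_P})$ really is all of $P$ (immediate from Lemma \ref{lem:Mcheck}), that a bijective homomorphism of monoids is an isomorphism (so it induces an equivalence of toposes), and the small sleight that full-and-faithfulness of $\check{\psi}$ \emph{at the identity idempotent alone} already forces $\psi$ to be a bijection — so that one need only invoke the ``inclusion $\Leftrightarrow$ full and faithful'' clause of Lemma \ref{lem:essfacts} together with the fact that an equivalence is an inclusion, avoiding any delicate bookkeeping about retracts.
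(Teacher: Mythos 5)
Your proof is correct and follows essentially the same route as the paper: the paper likewise identifies the terminal-connected surjection part as the essential morphism induced by $\psi\colon M\to\langle{\phi(M)}\rangle\rangle_N$ and observes that it is an equivalence precisely when $\psi$ is a bijection, which is equivalent to $\phi$ being injective with right-factorable image. Your write-up merely fills in the details the paper leaves implicit (testing full-and-faithfulness of $\check{\psi}$ at the identity idempotent, and noting $\langle{\phi(M)}\rangle\rangle_N=\phi(M)$ under the stated hypotheses).
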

\begin{proof}
The terminal-connected surjection part in \eqref{eq:tc-etale-monoid-morphism} is induced by the monoid homomorphism $M \to \langle{\phi(M)}\rangle\rangle_{eNe}$. So we get an equivalence if and only if this monoid homomorphism is a bijection. 
\end{proof}

To understand when the terminal-connected inclusion part of $f$ is an equivalence, we need the following definition.

\begin{dfn}
\label{dfn:ltimes}
Given a monoid $N$, we write $N^{\ltimes}$ for the submonoid of \textbf{right-invertible elements}. That is,
\[N^{\ltimes} := \{u \in N : \exists v \in N, \, uv = 1\}.\]
Dually, we write $N^{\rtimes}$ for the submonoid of \textbf{left-invertible elements}, so
\[N^{\rtimes} := \{v \in N \mid \exists u \in N, \, uv = 1\}.\]
\end{dfn}

\begin{proposition}
\label{prop:tcincequiv}
Let $f: \Setswith{M} \to \Setswith{N}$ be a geometric morphism induced by a monoid homomorphism $\phi : M \to N$. Then the terminal-connected inclusion part in the factorization \eqref{eq:tc-etale-monoid-morphism} is an equivalence if and only if for all $n \in N$ there is some $u \in N^\ltimes$ such that $nu \in \langle{\phi(M)}\rangle\rangle_N$. 
\end{proposition}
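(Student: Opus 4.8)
The plan is to unwind the definition of $j_2$ into a concrete condition on the category of elements $\int_{\check N} f_!(1)$. Since $\phi$ is a monoid homomorphism we have $e = \phi(1) = 1$, so $\langle{\phi(M)}\rangle\rangle_{eNe} = \langle{\phi(M)}\rangle\rangle_N$ and $f_!(1) = 1 \otimes_M N$; by Proposition \ref{prop:tc2}, $j_2$ is the essential geometric morphism induced by the full-subcategory inclusion of $\langle{\phi(M)}\rangle\rangle_N$ into $\int_{\check N} f_!(1)$ on the object $(\underline 1, 1 \otimes 1)$. By Corollary \ref{crly:idcomplete} the category $\int_{\check N} f_!(1)$ is idempotent complete, and since a fully faithful functor induces an equivalence of presheaf toposes precisely when it becomes an equivalence after idempotent completion, the inclusion of the full subcategory on a single object $c$ induces an equivalence exactly when every object of $\int_{\check N} f_!(1)$ is a retract of $c$. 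So the first step is to replace the claim by: $j_2$ is an equivalence if and only if every object of $\int_{\check N} f_!(1)$ is a retract of $(\underline 1, 1 \otimes 1)$.

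Next I would spell out the combinatorics. As a right $N$-set, $f_!(1) = 1 \otimes_M N$ is the quotient $N/{\sim_M}$, where $\sim_M$ is the right congruence generated by $1 \sim \phi(m)$ for $m \in M$, and by \cite[Lemma 2.12]{hr1} (as used in the proof of Lemma \ref{lem:(M))}) we have $\langle{\phi(M)}\rangle\rangle_N = \{ n \in N : n \sim_M 1 \}$. Writing $[n]$ for the $\sim_M$-class of $n$, an object of $\int_{\check N} f_!(1)$ is a pair $(\underline d, [m])$ with $d$ an idempotent of $N$ and $md \sim_M m$; a morphism $(\underline d, [m]) \to (\underline{d'}, [m'])$ is an element $n$ of $N$ with $nd = n = d' n$ and $m'n \sim_M m$; composition and identities are inherited from $\check N$. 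A routine check then shows that the element $d$ defines morphisms $(\underline d, [m]) \to (\underline 1, [m])$ and $(\underline 1, [m]) \to (\underline d, [m])$ whose composite is the identity $d$ of $(\underline d, [m])$, so every object $(\underline d, [m])$ is a retract of $(\underline 1, [m])$. Since a retract of a retract is a retract, it therefore suffices to decide when each object of the form $(\underline 1, [m])$, with $m$ ranging over $N$, is a retract of $(\underline 1, [1])$.

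Unwinding the definitions once more, $(\underline 1, [m])$ is a retract of $(\underline 1, [1])$ if and only if there exist $u, v \in N$ with $mu \sim_M 1$, $v \sim_M m$ and $uv = 1$ (the first two conditions say that $u$ and $v$ index morphisms $(\underline 1, [1]) \to (\underline 1, [m])$ and $(\underline 1, [m]) \to (\underline 1, [1])$, and the third that their composite is the identity of $(\underline 1, [m])$). For the ``if'' direction, apply the hypothesis to $n = m$ to obtain $u \in N^\ltimes$ with $mu \in \langle{\phi(M)}\rangle\rangle_N$, i.e.\ $mu \sim_M 1$; picking $v$ with $uv = 1$, we get $m = m(uv) = (mu)v \sim_M 1 \cdot v = v$ because $\sim_M$ is a right congruence, so $(u,v)$ witnesses the retract, whence every object of $\int_{\check N} f_!(1)$ is a retract of $(\underline 1, [1])$ and $j_2$ is an equivalence. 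For the converse, for every $n \in N$ the pair $(\underline 1, [n])$ is an object of $\int_{\check N} f_!(1)$; if $j_2$ is an equivalence then it is a retract of $(\underline 1, [1])$, and the resulting $u, v$ satisfy $uv = 1$ (so $u \in N^\ltimes$) and $nu \sim_M 1$ (so $nu \in \langle{\phi(M)}\rangle\rangle_N$), which is exactly the stated condition.

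The main obstacle is the first step: being precise about the reduction of ``$j_2$ is an equivalence'' to ``every object of $\int_{\check N} f_!(1)$ is a retract of $(\underline 1, 1 \otimes 1)$''. This rests on the idempotent-completeness of $\int_{\check N} f_!(1)$ (Corollary \ref{crly:idcomplete}) together with the standard fact that restriction along a fully faithful functor is an equivalence of presheaf toposes exactly when that functor is essentially surjective onto retracts. Everything after that — identifying the objects and morphisms of the category of elements, the reduction via the idempotents $d$, and the final manipulation of $\sim_M$ — is routine bookkeeping.
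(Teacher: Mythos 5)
Your proposal is correct and follows essentially the same route as the paper's proof: reduce ``$j_2$ is an equivalence'' to essential surjectivity up to retracts of the single-object inclusion, then unwind what it means for $\ast\otimes n$ to be a retract of $\ast\otimes 1$ in the category of elements using the congruence $\sim_M$ and Lemma \ref{lem:(M))}. The only cosmetic difference is that you work over $\check{N}$ and dispose of the objects $(\underline{d},[m])$ by an explicit retraction, whereas the paper takes the category of elements over $N$ itself so those objects never appear.
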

\begin{proof}
The terminal-connected inclusion part $\Setswith{\langle{M}\rangle\rangle_N} \to \Setswith{N}/f_!(1)$ is an essential inclusion induced by the functor $\langle{M}\rangle\rangle_N \to \int_N f_!(1)$. It is enough to show that this geometric morphism is surjective as well, which by Lemma \ref{lem:essfacts} is the case if and only if the functor $\langle{M}\rangle\rangle_N \to \int_N f_!(1)$ is essentially surjective up to retracts. In other words, we need that every element $\ast \otimes n \in f_!(1)$ is a retract of $\ast \otimes 1$ in the category of elements $\int_N f_!(1)$. Equivalently, for each $n \in N$ there are $u,v \in N$ such that $uv = 1$ and $\ast \otimes nu = \ast \otimes 1$. By the proof of Lemma \ref{lem:(M))}, $\ast \otimes nu = \ast \otimes 1$ if and only if $nu \in \langle{M}\rangle\rangle_N$. 
\end{proof}

Combining the two propositions above, we obtain a characterization of \'etale geometric morphisms induced by monoid homomorphisms.
\begin{theorem}
\label{thm:etale}
Let $f$ be an essential geometric morphism induced by a monoid homomorphism $\phi : M \to N$. Then the following are equivalent:
\begin{enumerate}
\item $f$ is \'etale;
\item $\phi$ is injective, $\phi(M) \subseteq N$ is right-factorable and for any $n \in N$ there is some $u \in N^{\ltimes}$ such that $nu \in \phi(M)$.
\end{enumerate}
More generally, if $\phi$ is merely a semigroup homomorphism, then $f$ is \'etale if and only if the monoid homomorphism part of $\phi$ satisfies the conditions above, and the inclusion $eNe \subseteq N$ induces an equivalence, where $e=\phi(1)$.
\end{theorem}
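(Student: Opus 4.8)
The plan is to assemble the statement from the two propositions immediately preceding it, together with the orthogonality of the factorization systems involved. I would first dispose of the case where $\phi$ is a monoid homomorphism, and then reduce the general semigroup-homomorphism case to it exactly as sketched in the discussion above.

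For $\phi$ a monoid homomorphism (so $e = \phi(1) = 1$), the starting point is the observation, already noted, that $f$ is \'etale if and only if its terminal-connected part $\Setswith{M} \to \Setswith{N}/f_!(1)$ is an equivalence: one implication is orthogonality of the (terminal-connected, \'etale) factorization applied to $f$, the other is that $f$ is then the composite of an equivalence with its \'etale part. By Proposition \ref{prop:tc} the (surjection, inclusion) factorization of this terminal-connected part is \eqref{eq:tc-etale-monoid-morphism}, namely $\Setswith{M} \xrightarrow{\,k\,} \Setswith{\langle{\phi(M)}\rangle\rangle_N} \xrightarrow{\,j\,} \Setswith{N}/f_!(1)$, with $k$ the terminal-connected surjection part and $j$ the terminal-connected inclusion part. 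Since the composite is an equivalence, hence both a surjection and an inclusion, the essential uniqueness of the (surjection, inclusion) factorization forces $k$ and $j$ each to be an equivalence; conversely if both are equivalences so is their composite. Thus $f$ is \'etale iff $k$ and $j$ are both equivalences. Now the first of the two preceding propositions says $k$ is an equivalence iff $\phi$ is injective and $\phi(M)$ is a right-factorable submonoid of $N$, and Proposition \ref{prop:tcincequiv} says $j$ is an equivalence iff for every $n \in N$ there is $u \in N^{\ltimes}$ with $nu \in \langle{\phi(M)}\rangle\rangle_N$. When $\phi(M)$ is right-factorable we have $\langle{\phi(M)}\rangle\rangle_N = \phi(M)$, so the conjunction of these two conditions is exactly condition (2); this gives $(1)\Leftrightarrow(2)$.

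For a general semigroup homomorphism $\phi$, put $e = \phi(1)$ and let $\psi : M \to eNe$ be the corestriction of $\phi$, a monoid homomorphism with $\psi(M) = \phi(M)$. If $f$ is \'etale then $f$ is a surjection, because $\Setswith{M}$ is non-degenerate and every non-initial object of the hyperconnected topos $\Setswith{N}$ is well-supported; hence the inclusion part of the (surjection, inclusion) factorization of $f$ is an equivalence. By Corollary \ref{crly:surj-inj} that factorization is $\Setswith{M} \xrightarrow{s} \Setswith{eNe} \xrightarrow{\iota} \Setswith{N}$, with $s$ induced by $\psi$ and $\iota$ by the inclusion $eNe \hookrightarrow N$; so $eNe \hookrightarrow N$ induces an equivalence, and since $\iota$ is then an equivalence, $f = \iota \circ s$ is \'etale iff $s$ is \'etale iff (by the monoid case, applied with $eNe$ in place of $N$) $\psi$ satisfies condition (2). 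Conversely, if $\psi$ satisfies condition (2) and $eNe \hookrightarrow N$ induces an equivalence, then $s$ is \'etale by the monoid case and $f = \iota \circ s$ is the composite of an \'etale morphism with an equivalence, hence \'etale.

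The only steps requiring genuine care, as opposed to routine bookkeeping, are the two ``cancellation'' facts used above: that the terminal-connected part of an \'etale morphism is an equivalence (and conversely), and that a surjection followed by an inclusion is an equivalence precisely when both factors are. Both are consequences of the essential uniqueness of the respective factorizations rather than of any new computation, so I do not anticipate a real obstacle; the substantive work is entirely contained in the two propositions that precede the theorem.
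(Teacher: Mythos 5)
Your argument is correct and is essentially the paper's own proof: the paper likewise reduces the semigroup case to the monoid case via the (surjection, inclusion) factorization and the observation that \'etale morphisms into $\Setswith{N}$ are surjections, and then obtains the monoid case by combining the two propositions characterizing when the terminal-connected surjection part and the terminal-connected inclusion part are equivalences, using $\langle{\phi(M)}\rangle\rangle_N = \phi(M)$ under right-factorability. The only detail you make more explicit than the paper is the appeal to uniqueness of the (surjection, inclusion) factorization to split "composite is an equivalence" into "both factors are equivalences", which is a standard fact and poses no problem.
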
 

Further, we remark that \'etale geometric morphisms are always essential, so all \'etale geometric morphisms $\Setswith{M} \to \Setswith{N}$ are induced by a semigroup homomorphisms.

\begin{corollary}
Let $f: \Setswith{M} \to \Setswith{N}$ be an \'etale geometric morphism. If $N^{\ltimes} = \{1\}$, then $f$ is an equivalence.
\end{corollary}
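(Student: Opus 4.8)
The plan is to use Theorem~\ref{thm:etale} to describe the semigroup homomorphism inducing $f$ and then show that the hypothesis $N^{\ltimes} = \{1\}$ forces it to be an isomorphism of monoids, so that $f$ is an equivalence. Since $f$ is étale it is in particular essential, hence (as noted just after Theorem~\ref{thm:etale}) induced by a semigroup homomorphism $\phi : M \to N$; write $e = \phi(1)$. By the last clause of Theorem~\ref{thm:etale}, two facts hold: the semigroup inclusion $eNe \subseteq N$ induces an equivalence of toposes $\Setswith{eNe} \simeq \Setswith{N}$, and the monoid-homomorphism part $\psi : M \to eNe$ of $\phi$ is injective, has right-factorable image, and satisfies: for every $n \in eNe$ there is some $u \in (eNe)^{\ltimes}$ with $nu \in \psi(M)$.

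The first step is to show $e = 1$, so that $\phi$ is already a monoid homomorphism. By Remark~\ref{rmk:phicheck} and Lemma~\ref{lem:Mcheck}, the inclusion $eNe \subseteq N$ induces on idempotent completions the inclusion of $\check{eNe}$ into $\check N$ as the full subcategory on those objects $\underline{g}$ with $g$ an idempotent of $N$ satisfying $eg = g = ge$; since this functor induces an equivalence of presheaf toposes it is itself an equivalence of categories, in particular essentially surjective, so $\underline{1}$ is isomorphic in $\check N$ to some such $\underline{g}$. Unwinding Lemma~\ref{lem:Mcheck}, an isomorphism $\underline{1} \to \underline{g}$ consists of elements $r \in gN$ and $s \in Ng$ of $N$ with $sr = 1$ and $rs = g$. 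From $sr = 1$ we get $s \in N^{\ltimes} = \{1\}$, hence $s = 1$, hence $r = 1$, hence $g = rs = 1$, and finally $e = ge = 1$. (Equivalently, one may invoke the Morita fact that $eNe \subseteq N$ induces an equivalence exactly when $1 \in NeN$ and run the same right-invertibility argument on a factorization $1 = aeb$.)

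Now $\phi = \psi : M \to N$ is a monoid homomorphism, $eNe = N$, and $(eNe)^{\ltimes} = N^{\ltimes} = \{1\}$, so the last condition from Theorem~\ref{thm:etale} collapses: for every $n \in N$, taking $u = 1$, we have $n = nu \in \phi(M)$, so $\phi$ is surjective. Being also injective, $\phi$ is an isomorphism of monoids, and hence $f$ is an equivalence. I expect the only step requiring genuine computation to be the reduction $e = 1$ — establishing that the topos-equivalence $\Setswith{eNe} \simeq \Setswith{N}$ combined with $N^{\ltimes} = \{1\}$ leaves no room for $e$ other than the identity; everything else is a formal consequence of Theorem~\ref{thm:etale}.
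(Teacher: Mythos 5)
Your proof is correct and follows essentially the same route as the paper: étale implies essential, so $f$ is induced by a semigroup homomorphism $\phi$ with $e=\phi(1)$; one shows $e=1$, and then the condition of Theorem \ref{thm:etale} combined with $N^{\ltimes}=\{1\}$ forces $\phi$ to be bijective. The only difference is that the paper obtains $e=1$ by citing a Morita-theoretic result of Knauer \cite{knauer-morita}, whereas you derive it directly by unwinding the isomorphism $\underline{1}\cong\underline{g}$ in the idempotent completion; your self-contained argument is valid.
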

\begin{proof}
Since $f$ is \'etale, it is essential. So up to equivalence, it is induced by some semigroup homomorphism $\phi : M \to N$. If $\phi(1)=e$, then the inclusion $eMe \subseteq M$ induces an equivalence. Because $N^\ltimes=\{1\} $, this implies $e=1$, see \cite[Corollary 6.2(3)]{knauer-morita}. So $\phi$ is a monoid homomorphism. Applying Theorem \ref{thm:etale}, for all $n \in N$, there is $u \in N^{\ltimes}$ such that $nu \in \phi(M)$. Because $N^{\ltimes}=\{1\}$, this means that $\phi$ is bijective, so $f$ is an equivalence.
\end{proof}

\begin{example} \label{eg:etale-geometric-morphisms} \ 
\begin{enumerate}
\item For $H \subseteq G$ an inclusion of groups, we have that the induced geometric morphism $\Setswith{H} \to \Setswith{G}$ is \'etale.
\item Consider the monoid $\Zbb_p^\ns$ of nonzero $p$-adic integers under multiplication. Each $p$-adic integer can be written as $up^k$ for $u \in \Zbb_p$ a unit $k \in \{0,1,2,\dots\}$. Further, if $x$ is a nonzero $p$-adic integer, then $xp^k = p^l$ implies $x = p^{l-k}$. From this it follows that the inclusion $\mathbb{N} \to \Zbb_p^\ns,~ k \mapsto p^k$ induces an \'etale geometric morphism $\Setswith{\Nbb} \to \Setswith{\Zbb_p^\ns}$, where $\Nbb$ is the monoid of natural numbers under addition.
\end{enumerate}
\end{example}

\subsection{The general case}

Since all \'{e}tale morphisms between toposes of monoid actions are essential, there is limited benefit to considering \'{e}tale geometric morphisms induced by $\fllr{M}{N}$-sets. However, we may still consider more general terminal-connected morphisms.

By definition, a geometric morphism $f:\PSh(M) \to \PSh(N)$ induced by an $[N,M)$-set $A$ is terminal-connected if and only if $\Hom_N(1,X) \cong \Hom_M(1,X \otimes_N A))$, naturally in $X$. Translating this into algebra, this is equivalent to requiring that for every right $N$-set $X$, the $M$-fixed points of $X \otimes_N A$ are all of the form $x \otimes a$, for $x$ an $N$-fixed point of $X$. Indeed, consider the mapping
\begin{align*}
\Hom_N(1,X) &\to \Hom_M(1,X \otimes_N A)) \\
x &\mapsto x \otimes a.
\end{align*}
This is independent of the choice of $a \in A$, since $x \otimes a = x \otimes (n \cdot a)$ for all $n \in N$ and $A$ is connected as a left $N$-set since it is filtered; $x \otimes a$ is an $M$-fixed point by a similar argument. Also, if $x,y$ are distinct $N$-fixed points of $X$, then we have a monomorphism $(x,y):1+1 \hookrightarrow X$ which is preserved by $A$ due to flatness, whence we have a monomomorphism $(x \otimes a,y \otimes a) \hookrightarrow X \otimes_N A$ and the mapping above is injective. So terminal-connectedness reduces to the requirement of surjectivity of this mapping.

Now observe that, while $\Hom_N(1,-)$ does not preserve arbitrary colimits, it does preserve the expression of an $N$-set as a colimit of its principal sub-$N$-sets. As such, we can reduce the condition to the special case of principal $N$-sets, to conclude:
\begin{lemma}
\label{lem:tccond}
A geometric morphism $f:\PSh(M) \to \PSh(N)$ induced by an $[N,M)$-set $A$ is terminal-connected if and only if for every principal $N$-set $X$, the $M$-fixed points of $X \otimes_N A$ are all of the form $x \otimes a$, for $x$ an $N$-fixed point of $X$.
\end{lemma}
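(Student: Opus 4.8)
The plan is to piggyback on the reduction already carried out in the discussion immediately preceding the statement: there we established that $f$ is terminal-connected if and only if for \emph{every} right $N$-set $X$, every $M$-fixed point of $X \otimes_N A$ is of the form $x \otimes a$ with $x \in \Fix_N(X)$. Since principal $N$-sets are in particular $N$-sets, the forward implication of the lemma is then immediate. So the only thing left to prove is the converse: if the displayed condition holds for all principal $N$-sets, then it holds for all $N$-sets.

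For this reduction step I would fix an arbitrary right $N$-set $X$ together with an $M$-fixed point $\xi \in X \otimes_N A$. Every element of the tensor product has the form $x \otimes a$ for some $x \in X$ and $a \in A$, and since $x$ lies in the principal sub-$N$-set $xN \subseteq X$ we get $\xi \in xN \otimes_N A$. The key observation is that the inclusion $xN \hookrightarrow X$ is a monomorphism in $\PSh(N)$, and $f^* = - \otimes_N A$ preserves monomorphisms because $A$ is flat; hence $xN \otimes_N A \hookrightarrow X \otimes_N A$ is an injective, $M$-equivariant map, so $\xi$ is also an $M$-fixed point of $xN \otimes_N A$. Applying the hypothesis to the principal $N$-set $xN$, we may write $\xi = y \otimes a$ with $y \in \Fix_N(xN)$, and since $xN \subseteq X$ such a $y$ is an $N$-fixed point of $X$. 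This verifies the condition for $X$, and hence by the preceding discussion $f$ is terminal-connected.

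I do not expect a genuine obstacle here: the conceptual work lies in the reduction to ``all $N$-sets'' that has already been done, and what remains is the routine observation that fixed points are ``locally principal'' in the evident sense, with flatness of $A$ guaranteeing that the comparison maps $xN \otimes_N A \to X \otimes_N A$ are honest subobject inclusions rather than arbitrary morphisms. (One could equivalently phrase the argument as the assertion that $\Fix_N$, resp.\ $\Fix_M$, commutes with the expression of $X$, resp.\ $X \otimes_N A$, as the union of the images of its principal sub-$N$-sets; but the element-level argument above is more direct and avoids worrying about non-directedness of that family.)
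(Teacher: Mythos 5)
Your proof is correct and takes essentially the same route as the paper: the paper's own proof consists of the preceding reduction to the condition over all $N$-sets, followed by the one-line observation that the relevant fixed-point functors respect the expression of $X$ as the colimit of its principal sub-$N$-sets. Your element-level argument (using flatness of $A$ to see that $xN \otimes_N A \to X \otimes_N A$ is monic, so that an $M$-fixed point of $X \otimes_N A$ restricts to one of $xN \otimes_N A$) is precisely the unwinding of that observation.
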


\begin{example}
Consider the geometric morphism $f : \PSh(\Zbb) \to \PSh(\Nbb)$ given by the $\fllr{\Nbb}{\Zbb}$-set $\Zbb$, with $\Nbb$ and $\Zbb$ both seen as monoids under addition. For integers $a\geq 0$ and $b \geq 1$, we write $N_{a,b}$ for the quotient of the right $\Nbb$-set $\Nbb$ by the congruence generated by $a \sim a+b$. The elements of $N_{a,b}$ can be written as $\{0,1,\dots,a+b-1\}$ and the generator $1 \in \Nbb$ acts by sending each $x$ to $x+1$ for $x \leq a +b-2$ and by sending $a+b-1$ to $a$. Every principal $N$-set is either isomorphic to $\Nbb$ or to some $N_{a,b}$. We compute $\Nbb\otimes_\Nbb \Zbb \cong \Zbb$ and $N_{a,b} \otimes_\Nbb \Zbb \cong \Zbb/b\Zbb$. So for a principal right $\Nbb$-set $X$, either both $X$ and $X \otimes_{\Nbb} \Zbb$ have no fixed points, or they both have precisely one fixed point. It follows that $f$ is terminal-connected. 
\end{example}

\begin{rmk}
Recently, Osmond in \cite{osmond-2geometries} identified the correct generalization of the (terminal-connected, \'{e}tale) factorization to be the (terminal-connected, \mbox{pro-\'{e}tale}) factorization. From \cite[Definition 5.4.6]{osmond-2geometries}, a geometric morphism $f:\Fcal \to \Ecal$ is \textbf{pro-\'etale} if it can be expressed as a cofiltered bilimit of \'etale morphisms over $\Ecal$. It might therefore be interesting to investigate this factorization system and its application to the special case of geometric morphisms between toposes of discrete monoid actions, but we leave this to future work.
\end{rmk}

\section{The (pure, complete spread) factorization}
\label{sec:pure-complete-spread}

In this section, we discuss the (pure, complete spread) factorization, sometimes called the comprehensive factorization. We shall see that this factorization system is dual in a concrete sense to the (terminal-connected, \'etale) factorization system of the last section; see Proposition \ref{prop:iniopf}, for example.

We first recall the definition of pure geometric morphisms.

\begin{dfn} \label{def:pure}
A geometric morphism $f:\Fcal \to \Ecal$ is said to be \textbf{dominant} if the canonical map $0 \to f_*(0)$ is an isomorphism.

A geometric morphism $f:\Fcal \to \Ecal$ is \textbf{pure} if the natural map $1 \sqcup 1 \to f_*(1 \sqcup 1)$ is an isomorphism. Equivalently, $f$ is pure if and only if $f_*$ preserves finite coproducts \cite[C3.4.12(i)]{Ele}, so in particular any pure geometric morphism is dominant.
\end{dfn}

\begin{remark}
Bunge and Funk's definition of pure geometric morphism in \cite{bunge-funk-spreads-I} and \cite{bunge-funk-spreads-II} is different (aside from the fact that we take as fixed base topos $\Scal = \Set$): there a geometric morphism is called pure if the the natural map $1 \sqcup 1 \to f_*(1 \sqcup 1)$ is an epimorphism, and pure dense if it is an isomorphism. In Definition \ref{def:pure}, we follow the convention originally used by Johnstone \cite{johnstone-factorization-II}, which was later also followed by Bunge and Funk in \cite{bunge-funk}. We recommend all of these references to a reader interested in a deeper treatment of these properties.
\end{remark}

\begin{lemma}
All geometric morphisms $f:\Setswith{M} \to \Setswith{N}$ are dominant.
\end{lemma}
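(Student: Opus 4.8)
The plan is to read off everything from the tensor--hom description of geometric morphisms in Theorem~\ref{thm:N-M-sets}. Write $f$ as the geometric morphism corresponding to the $\fllr{N}{M}$-set $A = f^*(N)$, so that the direct image functor is $f_* = \Hom_M(A,-) : \Setswith{M} \to \Setswith{N}$. The initial object $0$ of $\Setswith{M}$ is the empty right $M$-set $\emptyset$, and likewise $0$ in $\Setswith{N}$ is the empty right $N$-set; so, by Definition~\ref{def:pure}, to show $f$ is dominant it suffices to check that the canonical comparison $0 \to f_*(0)$, which here is the map $\emptyset \to \Hom_M(A,\emptyset)$, is an isomorphism.

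The key observation is then that $A$ is non-empty. Indeed, $A$ is flat as a left $N$-set by Definition~\ref{dfn:fllr}, and non-emptiness is precisely the first of the three conditions in the characterization of flatness recalled just before that definition. Since there is no function at all (let alone an $M$-equivariant one) from a non-empty set to the empty set, we get $\Hom_M(A,\emptyset) = \emptyset$. Thus both the source and the target of the canonical comparison map are the initial object of $\Setswith{N}$, so that map is forced to be the identity, hence an isomorphism; therefore $f$ is dominant.

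There is no substantial obstacle here, so the proof will be short; the only point worth flagging is that flatness of $A$ is used in an essential way. The statement genuinely fails for arbitrary distributions: if the corresponding $(N,M)$-set is $A = \emptyset$, then $f_*(0) = \Hom_M(\emptyset,\emptyset)$ is a singleton, i.e.\ the terminal object of $\Setswith{N}$, which is not initial, so that distribution is not dominant. This is exactly why the hypothesis is restricted to honest geometric morphisms.
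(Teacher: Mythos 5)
Your proof is correct and is essentially identical to the paper's: both identify $f_*(0) = \Hom_M(A,0)$ and conclude it is empty because flatness of $A$ as a left $N$-set forces $A$ to be non-empty. The remark about the failure for the empty $(N,M)$-set is a nice extra observation but does not change the argument.
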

\begin{proof}
Suppose $f$ is induced by the $\fllr{N}{M}$-set $A$. Then $f_*(0) = \Hom_M(A,0) = 0$ since $A$ (being flat as a left-$N$-set) is non-empty.
\end{proof}

The following lemma is a special case of \cite[Proposition 2.7]{bunge-funk-spreads-I}, in the setting of locally connected Grothendieck toposes over the base topos $\Scal = \Set$. We give a simplified proof in this special case.

\begin{lemma} \label{lem:characterization-pure-general}
Suppose $f:\Fcal \to \Ecal$ is a geometric morphism (between Grothendieck toposes) and both $\Fcal$ and $\Ecal$ are locally connected. Then the following are equivalent:
\begin{enumerate}
	\item $f$ is a pure geometric morphism;
	\item $f^*$ preserves connected objects;
	\item The unit of $f$ is an isomorphism at objects of the form $p^*(S)$, where $p$ is the global sections geometric morphism of $\Ecal$.
	\item $f_*$ preserves small coproducts.
\end{enumerate}
\end{lemma}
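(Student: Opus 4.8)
The plan is to establish the cycle of implications $(1)\Rightarrow(2)\Rightarrow(4)\Rightarrow(3)\Rightarrow(1)$. Write $p\colon\Ecal\to\Set$ and $q\colon\Fcal\to\Set$ for the global sections morphisms, so that $p\circ f= q$ and hence $q^{*}=f^{*}p^{*}$ and $q_{*}=p_{*}f_{*}$; by local connectedness, $p^{*}$ and $q^{*}$ have further left adjoints $p_{!},q_{!}$. I will use freely that in a locally connected topos an object $C$ is \emph{connected} (equivalently: non-initial with no complemented subobjects besides $0$ and $C$; equivalently: $p_{!}(C)\cong 1$) if and only if $\Hom(C,-)$ preserves small coproducts, and that every object is a coproduct of connected objects, so that the connected objects form a generating family (see \cite{Ele}). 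Two elementary observations, valid in both $\Ecal$ and $\Fcal$, do most of the work: (i) if $C$ has no complemented subobjects besides $0,C$, then every morphism $C\to\coprod_{i}Y_{i}$ factors through a unique coproduct inclusion (pull the decomposition back along the morphism and use that coproducts in a topos are disjoint and stable), so in particular $\Hom(C,1\sqcup 1)$ has exactly two elements when $C\ne 0$; and (ii), combining (i) with the decomposition into connected objects, an object $Z$ is connected if and only if $\Hom(Z,1\sqcup 1)$ has exactly two elements. We may assume both toposes non-degenerate, the degenerate case being straightforward.

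For $(1)\Rightarrow(2)$: if $f$ is pure then $f_{*}$ preserves finite coproducts, so $f_{*}(1_{\Fcal}\sqcup 1_{\Fcal})\cong 1_{\Ecal}\sqcup 1_{\Ecal}$; given a connected $Z$ in $\Ecal$, the adjunction together with $f^{*}(1_{\Ecal}\sqcup 1_{\Ecal})\cong 1_{\Fcal}\sqcup 1_{\Fcal}$ gives $\Hom_{\Fcal}(f^{*}Z,1_{\Fcal}\sqcup 1_{\Fcal})\cong\Hom_{\Ecal}(Z,f_{*}(1_{\Fcal}\sqcup 1_{\Fcal}))\cong\Hom_{\Ecal}(Z,1_{\Ecal}\sqcup 1_{\Ecal})$, which has two elements by (i), so $f^{*}Z$ is connected by (ii). For $(2)\Rightarrow(4)$: let $(Y_{i})$ be a family in $\Fcal$ and $\theta\colon\coprod_{i}f_{*}Y_{i}\to f_{*}(\coprod_{i}Y_{i})$ the canonical comparison; for every connected $C$ in $\Ecal$, $f^{*}C$ is connected, hence $\Hom_{\Fcal}(f^{*}C,-)$ preserves coproducts, so $\Hom_{\Ecal}(C,f_{*}\coprod_{i}Y_{i})\cong\coprod_{i}\Hom_{\Ecal}(C,f_{*}Y_{i})\cong\Hom_{\Ecal}(C,\coprod_{i}f_{*}Y_{i})$ compatibly with $\theta$; since connected objects generate $\Ecal$, $\theta$ is mono and epi, hence an isomorphism. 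For $(4)\Rightarrow(3)$: $f^{*}p^{*}(S)=q^{*}(S)=\coprod_{S}1_{\Fcal}$, and $f_{*}$ preserving coproducts and the terminal object yields $f_{*}f^{*}p^{*}(S)\cong\coprod_{S}1_{\Ecal}=p^{*}(S)$, with the unit $\eta_{p^{*}(S)}$ identified with this comparison isomorphism by naturality. For $(3)\Rightarrow(1)$: take $S$ the two-element set; naturality of the unit along the two inclusions $1_{\Ecal}\to 1_{\Ecal}\sqcup 1_{\Ecal}$ (using $\eta_{1_{\Ecal}}=\id$) identifies $\eta_{p^{*}(S)}\colon 1_{\Ecal}\sqcup 1_{\Ecal}\to f_{*}(1_{\Fcal}\sqcup 1_{\Fcal})$ with the canonical map whose invertibility defines pureness.

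The substantive steps are $(1)\Rightarrow(2)$ and $(2)\Rightarrow(4)$, and the latter is where the only real care is needed: the passage from ``$\Hom_{\Ecal}(C,\theta)$ is a bijection for every connected $C$'' to ``$\theta$ is an isomorphism''. For this one argues that $\theta$ is monic by testing $\theta u=\theta v$ on the connected summands of the domain of $u,v$, and epic because any morphism from a connected object into $f_{*}(\coprod_{i}Y_{i})$ lifts along $\theta$ while connected objects jointly cover that target; a morphism in a topos that is both monic and epic is an isomorphism. The remaining bookkeeping --- dispatching the degenerate case, checking that the canonical comparison maps genuinely match the units via naturality, and confirming $f^{*}(1\sqcup 1)\cong 1\sqcup 1$ with distinct inclusions --- is routine.
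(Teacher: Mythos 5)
Your proof is correct, and while it turns on the same two facts as the paper's argument ($1\sqcup 1$ detects connectedness via complemented subobjects, and a locally connected topos is generated by its connected objects), the implication structure and the key middle step are genuinely different. The paper proves $(1)\Leftrightarrow(2)$ in both directions at once, then gets $(2)\Rightarrow(3)$ slickly by observing that preservation of connected objects gives $q_!f^*\cong p_!$ and passing to right adjoints, and finally establishes $(3)\Leftrightarrow(4)$ by reconstructing $f_*(\coprod_s X_s)$ from the pullbacks of the summands over $q^*(S)$, using stability of coproducts under pullback. You instead close a single cycle and replace the adjoint-transpose step by a direct proof of $(2)\Rightarrow(4)$: test the comparison map $\theta\colon\coprod_i f_*Y_i\to f_*(\coprod_i Y_i)$ against the generating family of connected objects and conclude $\theta$ is monic and epic, hence invertible. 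What your route buys is that the pullback-stability argument disappears entirely ($(4)\Rightarrow(3)$ becomes the trivial direction); what it costs is the naturality bookkeeping identifying your composite bijection with $\Hom(C,\theta)$ and the mono-plus-epi verification, which you correctly flag as the one place requiring care. Both arguments are sound; yours is arguably more elementary in that it never invokes the left adjoints $p_!,q_!$ beyond the standard characterization of connected objects, while the paper's is more economical once $q_!f^*\cong p_!$ is on the table.
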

\begin{proof}
Let $q,p$ be the global sections morphisms of $\Fcal,\Ecal$ respectively. 

$(1) \Leftrightarrow (2)$ The object $1 \sqcup 1$ classifies complemented subobjects. That is, for $X$ in $\Ecal$, the complemented subobjects of $X$ correspond to maps $X \to 1 \sqcup 1$. Further, the map $1 \sqcup 1 \to f_*(1 \sqcup 1)$ induces a map $\beta_X : \Hom_\Ecal(X,1 \sqcup 1) \to \Hom_\Ecal(X,f_*(1\sqcup 1)) \simeq \Hom_\Fcal(f^*(X),1\sqcup 1)$. It follows that $f$ is pure if and only if the map $\beta_X$ is a bijection for every $X$ in a generating family $S$ for $\Ecal$. This is the case if and only if for any $X \in S$, the inverse image functor $f^*$ induces a bijection between complemented subobjects of $X$ and complemented subobjects of $f^*(X)$. Because $\Ecal$ is locally connected, this shows that $f$ is pure if and only if $f^*$ preserves connected objects.

$(2) \Rightarrow (3)$ Preservation of connected objects means that $q_!f^* \cong p_!$, since all of these functors preserve the expression of objects of $\Ecal$ as coproducts of connected objects. In particular, their adjoints are isomorphic, which is to say that $p^* \cong f_*q^* = f_*f^*p^*$ (and more specifically this forces the desired unit morphisms to be an isomorphisms).

$(3) \Leftrightarrow (4)$ Given a set $S$ indexing a family of objects $\{X_s \mid s \in S\}$ of objects of $\Fcal$, observe that their coproduct is determined by the collection of pullbacks, 
\[\begin{tikzcd}
X_s \ar[r] \ar[d] \ar[dr, phantom, very near start, "\lrcorner"] & 1 \ar[d, "s"] \\
\coprod_{s \in S}X_s \ar[r] & q^*(S);
\end{tikzcd}\]
this is because coproducts are stable under pullback in a topos. Applying $f_*$, which preserves these pullbacks, we see that we have:
\[\begin{tikzcd}
f_*(X_s) \ar[r] \ar[d] \ar[dr, phantom, very near start, "\lrcorner"] & 1 \ar[d, "s"] \\
f_*(\coprod_{s \in S}X_s) \ar[r] & f_*q^*(S),
\end{tikzcd}\]
and so given that $f_*q^*(S) \cong p^*(S)$ (compatibly with the inclusions of the elements $1 \to p^*(S)$) this expresses $f_*(\coprod_{s \in S}X_s)$ as the coproduct of the objects $f_*(X_s)$, as required. Conversely, if $f_*$ preserves small coproducts, the unit at $p^*(S)$ can be decomposed as $p^*(S) \cong \coprod_{s \in S} f_*(1) \cong f_*(\coprod_{s \in S} 1) = f_*q^*(S)$, so it is an isomorphism.

$(4) \Rightarrow (1)$ Immediate from the definition of pure morphisms.
\end{proof}

It follows from the proof above that to check that $f$ is pure, it is enough to check that $f^*(X)$ is connected, for each $X$ in a generating family for $\Ecal$. 

Connected geometric morphisms are pure, cf. \cite[Lemma C3.4.14]{Ele}. Moreover, a locally connected geometric morphism to $\Set$ is pure if and only if it is connected (if and only if the terminal object is connected). 

\begin{example}
If $X$ is a topological space, then the unique geometric morphism $\Sh(X) \to \Set$ is pure if and only if it is connected, which is the case if and only if $X$ is connected as a topological space. More generally, let $X$ and $Y$ be locally connected topological spaces and let $\phi : Y \to X$ be a continuous map. Then by Lemma \ref{lem:characterization-pure-general} the induced geometric morphism $f : \Sh(Y) \to \Sh(X)$ is pure if and only if the inverse image of any connected open set is connected. In particular, consider an inclusion $\{x\} \subseteq X$ and let $p : \Set \to \Sh(X)$ be the induced geometric morphism. If $X$ is locally connected, then $p$ is pure if and only if $x$ is contained in any connected open subset $U \subseteq X$. This is the case if and only if $x$ is contained in any open subset, i.e.\ if and only if $x$ is a dense point. 

For example, consider the spectrum $\mathrm{Spec}(\Zbb)$ with the Zariski topology (this is a locally connected topological space, because every nonempty open subset is connected). Let $x$ be the generic point of $\mathrm{Spec}(\Zbb)$, corresponding to the prime ideal $(0)$. Then the inclusion $\{x\} \subseteq \mathrm{Spec}(\Zbb)$ induces a pure geometric morphism. Note that the geometric morphism induced by $\{x\} \subseteq \mathrm{Spec}(\Zbb)$ is not surjective, in particular it is not connected.
\end{example}

We now recall the definition of spreads, based on \cite[Definition 1.1]{bunge-funk-spreads-I} and the equivalent conditions of their Proposition 1.5. Note that \cite[Proposition 1.5]{bunge-funk-spreads-I} mentions \textit{definable subobjects}, which are the suitable generalization of complemented subobjects to an arbitrary base topos. Since we are working with Grothendieck toposes over $\Set$ in this paper, the definable subobjects are precisely the complemented subobjects, which simplifies the definition. 

\begin{definition} \label{def:spread}
Let $f : \Fcal \to \Ecal$ be a geometric morphism, and let $S$ be a generating family of $\Ecal$. Then $f$ is a \textbf{spread} if for every object $F$ in $\Fcal$, there is a complemented subobject $C \subseteq \bigsqcup_{i \in I} f^*(X_i)$, with each $X_i \in S$, such that there is an epimorphism $C \to F$. This definition does not depend on the choice of generating family $S$.
\end{definition}

To see that the definition of spread does not depend on the choice of generating family, suppose we have an inclusion $S' \subseteq S$ of generating families for $\Ecal$. Let $C \subseteq \bigsqcup_{i\in I} f^*(X_i)$ be a complemented subobject, with each $X_i \in S$. Then we can find for each $i \in I$ an epimorphism $\bigsqcup_{j \in I(i)} X_{ij} \to X_i$ for some set $I(i)$ and $X_{ij} \in S'$ for all $j \in I(i)$. Pulling back $C$ along the epimorphism 
\begin{equation*}
\bigsqcup_{\substack{i \in I \\ j \in I(i)}} f^*(X_{ij}) \to \bigsqcup_{i \in I} f^*(X_i)
\end{equation*}
gives a complemented subobject $C' \subseteq \bigsqcup_{i \in I,~j\in I(i)} f^*(X_{ij})$, together with an epimorphism $C' \to C$. Any epimorphism $C \to F$ will then extend to an epimorphism $C' \to F$. Thus, given any pair of generating families, we can take their union and conclude via two applications of this argument that the definitions involving the respective families agree.

\begin{proposition}
\label{prop:spreadchar}
Let $f : \Fcal \to \Ecal$ be a geometric morphism. Then $f$ is a spread if and only if every object $F$ in $\Fcal$ there is a complemented subobject $C \subseteq f^*(X)$, for some $X$ in $\Ecal$, and an epimorphism $C \to F$. In particular, every spread is localic.
\end{proposition}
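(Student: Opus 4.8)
The plan is to prove the two implications of the equivalence separately, and then to deduce localicity as an afterthought. One direction is trivial: if for every $F$ in $\Fcal$ there is a single object $X$ in $\Ecal$, a complemented subobject $C \subseteq f^*(X)$, and an epimorphism $C \too F$, then taking the generating family $S = \ob(\Ecal)$ (or indeed any generating family, using the independence of the choice of $S$ established just before the statement) we have trivially exhibited $f$ as a spread, since a single $f^*(X)$ is in particular a coproduct $\bigsqcup_{i \in I} f^*(X_i)$ indexed by a one-element set. So the content is in the forward direction.

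For the forward direction, suppose $f$ is a spread. We are free to pick a convenient generating family; the natural choice is $S = \ob(\Ecal)$, so that for every $F$ in $\Fcal$ we are handed a complemented subobject $C \subseteq \bigsqcup_{i \in I} f^*(X_i)$ together with an epimorphism $C \too F$. The key step is to absorb the coproduct into a single object: since $f^*$ is a left adjoint it preserves coproducts, so $\bigsqcup_{i \in I} f^*(X_i) \cong f^*\!\left(\bigsqcup_{i \in I} X_i\right)$. Setting $X := \bigsqcup_{i \in I} X_i$, which is again an object of $\Ecal$, we obtain $C$ as a complemented subobject of $f^*(X)$ together with the epimorphism $C \too F$, exactly as required. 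The only point that needs a word of care is that the isomorphism $\bigsqcup_{i} f^*(X_i) \cong f^*(\bigsqcup_i X_i)$ carries the complemented subobject $C$ of the former to a complemented subobject of the latter — but complementedness is a property stable under isomorphism of ambient objects (a complement of $C$ is transported to a complement of its image), so this is automatic.

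Finally, for the ``in particular'' clause: given the simplified characterization just obtained, every object $F$ of $\Fcal$ is a quotient of a complemented — hence a fortiori arbitrary — subobject of some $f^*(X)$, so $F$ is a subquotient of $f^*(X)$. By Definition \ref{dfn:gmbasics} this is precisely the condition that $f$ be localic. I expect essentially no obstacle here; the whole argument is the observation that inverse image functors preserve coproducts, and the main thing to get right is simply to invoke the independence of Definition \ref{def:spread} from the generating family so that we may take $S$ to be the class of all objects and thereby subsume the indexing set $I$ into one coproduct.
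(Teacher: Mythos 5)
Your proof is correct and takes essentially the same route as the paper: pass to the (large) generating family of all objects of $\Ecal$ via the independence of Definition \ref{def:spread} from the choice of generating family, and absorb the coproduct into a single object using the fact that $f^*$, being a left adjoint, preserves coproducts. The paper's proof is terser (it leaves the coproduct-absorption step implicit and cites Bunge--Funk), so your write-up just makes the same argument more explicit.
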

\begin{proof}
The assumption that the generating family is small was not necessary in the discussion above. In particular, taking $S$ the class of all objects in $\Ecal$, we obtain the hypothesised characterization of spreads. This result appears in \cite[Corollary 3.1.8]{bunge-funk}.
\end{proof}

\begin{example}
Let $X$ be a topological space. Then the unique geometric morphism $\Sh(X) \to \Set$ is a spread if and only if $X$ has a basis of clopen subsets (i.e.\ if and only if $X$ is zero-dimensional).
\end{example}

As a special case of spreads, we also include the following definition:
\begin{dfn}
We say a geometric morphism $f:\Fcal \to \Ecal$ is an \textbf{injection} if its direct image $f_*$ is faithful. Equivalently, $f$ is an injection if and only if every object of $\Fcal$ is a quotient of one in the image of $f^*$, whence by Proposition \ref{prop:spreadchar} every injection is a spread.
\end{dfn}

We can prove the equivalence of the two definitions as follows. A general fact from category theory is that $f_*$ is faithful if and only if the counit of the adjunction $f^*f_*X \to X$ is an epimorphism. So if $f_*$ is faithful, then $X$ is a quotient of $f^*f_*X$. Conversely, if $X$ is a quotient of $f^*Y$ for some $Y$ in $\Ecal$, then the quotient map $f^*Y \to X$ factorizes through the counit $f^*f_*X \to X$, so the counit is an epimorphism.

In particular, inclusions are injections, so \textit{a fortiori} inclusions are spreads.

We now discuss the notion of completeness for geometric morphisms with locally connected domain. Again, the definition we give below is simplified by the fact we are working with Grothendieck toposes over $\Set$. For the more general definition, we refer to \cite{bunge-funk}.

Let $f : \Fcal \to \Ecal$ be a geometric morphism with $\Fcal$ locally connected, and let $(\Ccal,J)$ be a site for $\Ecal$. Consider the category $\Conn(f)$ with as objects the pairs $(C,c)$, with $C$ in $\Ccal$ and $c \hookrightarrow f^*(C)$ a connected component, and as morphisms $(C,c) \to (C',c')$ the maps $a : C \to C'$ such that the image of $c$ along $f^*(a)$ is contained in $c'$. There is a flat functor $\Conn(f) \to \Fcal$ that sends an object $(C,c)$ to $c$ and a morphism $a$ to $f^*(a)|_c$. This functor corresponds to a geometric morphism $g : \Fcal \to \PSh(\Conn(f))$. We consider two classes of sieves on $\Conn(f)$.

\begin{definition} \label{def:complete}
Let $S = \{ a_i : (C_i,c_i) \to (C,c) \}_{i \in I}$ be a sieve on $(C,c)$ in $\Conn(f)$. We say that $S$ is an \textbf{$f$-covering sieve} if its image along $g^*$, $\{ f^*(a_i)|_{c_i} : c_i \to c \}$, is a jointly epimorphic family in $\Fcal$. Further, we say that $S$ is an \textbf{amalgamation covering sieve} if there is a $J$-covering sieve $R$ on $\Ccal$ such that $S$ contains the pullback of $R$ to $(C,c)$, i.e.\ such that for every $a' : C' \to C$ in $R$ and connected component $c' \subseteq f^*(C')$ with $f^*(a)(c') \subseteq c$ we have that the morphism $a' : (C',c') \to (C,c)$ is in $S$.

One can show that any amalgamation covering sieve is $f$-covering. We say that $f$ is \textbf{complete} if every $f$-covering sieve is an amalgamation covering sieve. Further, $f$ is a \textbf{complete spread} if it is both a spread and complete.
\end{definition}

\begin{remark}
\label{rmk:widerdef}
In the setting of toposes, the above concept of completeness was first introduced only for spreads with locally connected domain by Bunge and Funk in \cite{bunge-funk-spreads-I}; they later introduced the notion for arbitrary geometric morphisms with locally connected domain in \cite{bunge-funk}. They prove that the definition above is independent of the site chosen to present $\Ecal$, just as was the case for spreads. The notion of complete spreads in topos theory was inspired by complete spreads in topology, as introduced by Fox \cite{fox}. In this paper, complete geometric morphisms will always have locally connected domain, but note that a wider definition of complete spreads appears in \cite{bunge-funk-hyperpure-journal}; by Proposition 2.2 there, that definition coincides with the one here for geometric morphisms with locally connected domain.
\end{remark}

\begin{theorem} \label{thm:pure-complete-spread-factorization-general}
Let $f : \Fcal \to \Ecal$ be a geometric morphism, with $\Fcal$ locally connected. Then $f$ has a unique factorization as a pure geometric morphism followed by a complete spread, and a unique factorization as a pure surjection followed by a spread. Further, $f$ is complete if and only if its pure part is a surjection (so in this case, the two factorizations coincide). 
\end{theorem}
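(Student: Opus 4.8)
The plan is to take the existence and uniqueness of the (pure, complete spread) factorization as already established by Bunge and Funk (see \cite{bunge-funk-spreads-I} and \cite{bunge-funk}) and to build the rest on top of it, using the (surjection, inclusion) factorization. So I would begin by writing the (pure, complete spread) factorization of $f$ as $\Fcal \xrightarrow{p} \Ical \xrightarrow{g} \Ecal$, with $p$ pure and $g$ a complete spread, recalling that uniqueness of this factorization is an instance of the orthogonality of pure morphisms to complete spreads.

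For the (pure surjection, spread) factorization, I would take the (surjection, inclusion) factorization $\Fcal \xrightarrow{s} \Gcal \xrightarrow{i} \Ical$ of the pure part $p$ and verify two facts. First, $s$ is again pure: by Definition~\ref{def:pure}, purity of a geometric morphism $h$ says precisely that the canonical map $1 \sqcup 1 \to h_*(1 \sqcup 1)$ is an isomorphism, so applying $i^*$ to the isomorphism $1 \sqcup 1 \xrightarrow{\sim} p_*(1 \sqcup 1) = i_* s_*(1 \sqcup 1)$, and using that $i^*$ preserves finite coproducts while $i^* i_* \cong \id$ because $i$ is an inclusion, yields (after a routine naturality check) that the canonical map $1 \sqcup 1 \to s_*(1 \sqcup 1)$ is an isomorphism; hence $s$ is a pure surjection. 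Second, $g \circ i$ is a spread: $i$ is an inclusion, hence an injection, hence a spread, $g$ is a spread, and the class of spreads is closed under composition --- this last point following directly from the characterization in Proposition~\ref{prop:spreadchar}, by pulling back a witnessing complemented subobject for $i$ along the image under $i^*$ of a witnessing epimorphism for $g$. Thus $\Fcal \xrightarrow{s} \Gcal \xrightarrow{g \circ i} \Ecal$ is the desired factorization of $f$ into a pure surjection followed by a spread.

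For uniqueness I would reduce to the orthogonality of pure surjections and spreads. Given an arbitrary factorization $f = g' \circ p'$ with $p'$ a pure surjection and $g'$ a spread, take the (pure, complete spread) factorization $g' = g'_2 \circ g'_1$; then $g'_1 \circ p'$ is pure (pure morphisms compose) and $g'_2$ is a complete spread, so uniqueness of the (pure, complete spread) factorization identifies $g'_2$ with $g$ and $g'_1 \circ p'$ with $p$. It then remains to check that $p \simeq g'_1 \circ p'$ is the (surjection, inclusion) factorization of $p$: here $p'$ is a surjection by hypothesis, and $g'_1$, being the pure part of the spread $g'$, is again pure and (by Proposition~\ref{prop:spreadchar} once more) again a spread, so it suffices to show that a geometric morphism which is simultaneously pure and a spread is an inclusion; uniqueness of the (surjection, inclusion) factorization then forces $p' \simeq s$ and $g'_1 \simeq i$, whence $g' = g'_2 \circ g'_1 \simeq g \circ i$. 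The implication ``pure $+$ spread $\Rightarrow$ inclusion'' is the step I expect to be the main obstacle: the complemented-subobject description of spreads, combined with the fact that a pure inverse image functor carries connected objects to connected objects, ought to force the counit of such a morphism to be invertible, but the details need care. (Alternatively, one may simply cite Bunge and Funk for the statement that pure surjections and spreads form an orthogonal factorization system, which gives existence and uniqueness at once.)

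Finally, for the assertion about completeness I would invoke the result of Bunge and Funk (in \cite{bunge-funk-spreads-II} and \cite{bunge-funk}) that a geometric morphism with locally connected domain is complete if and only if the pure part of its (pure, complete spread) factorization is a surjection. Granting this: if $f$ is complete then its pure part $p$ is a pure surjection, so $\Fcal \xrightarrow{p} \Ical \xrightarrow{g} \Ecal$ is already a factorization of $f$ as a pure surjection followed by a (complete) spread, hence --- by the uniqueness above --- is the (pure surjection, spread) factorization of $f$, so the two factorizations coincide. Conversely, if the two factorizations coincide then the pure part of the (pure surjection, spread) factorization is a surjection, and being identified with the pure part of the (pure, complete spread) factorization, it shows that the latter is a surjection, so $f$ is complete.
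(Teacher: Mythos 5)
Your proposal is correct and follows essentially the same route as the paper, which likewise defers the substantive content to Bunge and Funk (\cite{bunge-funk}, Theorems 2.4.8, 3.1.16 and 3.5.3) and sketches exactly the construction you give: obtain the (pure surjection, spread) factorization by splitting the pure part of the (pure, complete spread) factorization into a surjection followed by an inclusion, using that inclusions are spreads and that spreads compose. Your extra detail on uniqueness is sound, and the one step you flag as an obstacle (pure $+$ spread $\Rightarrow$ inclusion) is handled in the paper the same way you suggest, by citation.
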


The theorem is due to Bunge and Funk. Since we will produce explicit reconstructions in the cases of interest in the following subsections, we give references to their book \cite{bunge-funk} in place of a proof. The existence and uniqueness of the (pure, complete spread) factorization are provided in Theorem 2.4.8, while the (pure surjection, spread) factorization appears in Theorem 3.1.16. The (pure surjection, spread) factorization is built by taking the (pure, complete spread) factorization first, and then writing the pure part as a pure surjection followed by a pure inclusion. Because inclusions are spreads, the composition of a pure inclusion followed by a complete spread is a spread. The result that $f$ is complete if and only if its pure part is a surjection is Theorem 3.5.3.

To give some intuition regarding complete geometric morphisms, we include an example and a result for the special case of maps between topological spaces.

\begin{example}
Consider the inclusion $i : W \subseteq \Rbb^2$, with $W$ the unit circle minus the point $(1,0)$. The induced geometric morphism is an inclusion, in particular a spread. We claim that it is not complete. To see this, we take as basis for the topology on $\Rbb^2$ the open balls with radius at most $1/2$. In the notations of Definition \ref{def:complete}, these open balls and the inclusions between them form the site $(\Ccal,J)$. The category $\Conn(i)$ consists of the pairs $(U,c)$, where $U$ is an open ball of radius at most $1/2$ and $c \subseteq W \cap U$ a component. Take a pair $(U,c)$ with $U$ containing $(1,0)$ and an open set $V$ such that $V \cap W = c$. Then the sieve generated by $(V,c) \to (U,c)$ is an $i$-covering sieve. However, it is not an amalgamation covering sieve, because for any amalgamation covering sieve must contain a pair $(U',c')$ with $(1,0) \in U' \subseteq U$ and $c' \subseteq c$.
\end{example}

\begin{proposition}
Let $X \subseteq Y$ be a subspace, with $X$ locally connected. If each connected component $X'$ of $X$ is closed in $Y$, then the geometric morphism induced by $X \subseteq Y$ is complete. The converse holds if we assume that in $Y$ all points are closed.
\end{proposition}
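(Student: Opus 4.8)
The plan is to work throughout with the site for $\Sh(Y)$ whose underlying category is the poset $\mathcal{O}(Y)$ of open subsets of $Y$ with the open-cover topology $J$; by Remark~\ref{rmk:widerdef} completeness is independent of this choice. With this site, $\Conn(f)$ has as objects the pairs $(U,c)$ with $U\subseteq Y$ open and $c$ a connected component of the space $X\cap U$ (note $c$ is open in $X$, since $X$, hence $X\cap U$, is locally connected), and a morphism $(U',c')\to(U,c)$ is an inclusion $U'\subseteq U$ with $c'\subseteq c$. Unwinding Definition~\ref{def:complete}, a sieve $S$ on $(U,c)$ is $f$-covering exactly when $\bigcup\{c' : (U',c')\in S\} = c$, and it is an amalgamation covering sieve exactly when there is an open cover $R$ of $U$ such that $(W,c')\in S$ for every open $W$ contained in a member of $R$ and every component $c'$ of $X\cap W$ with $c'\subseteq c$. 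So I must show that under the stated hypotheses these two classes of sieve coincide.

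For the forward implication, assume each component of $X$ is closed in $Y$, and let $S$ be an $f$-covering sieve on $(U,c)$; write $X'$ for the component of $X$ containing $c$, which is clopen in $X$ (open since $X$ is locally connected, closed since components always are). The key geometric fact I would establish first is a separation statement: since $X'$ is closed in $Y$ we have $\overline{c}^{\,Y}\subseteq X'\subseteq X$, and since $c$ is clopen in the locally connected space $X\cap U$ it is closed there, so $\overline{c}^{\,Y}\cap U = \overline{c}^{\,Y}\cap X\cap U = \overline{c}^{\,X}\cap(X\cap U) = c$. In particular every point of $U$ outside $c$ — whether it lies in $(X\cap U)\setminus c$ or outside $X$ altogether — lies in the open set $U\setminus\overline{c}^{\,Y}$. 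Now for each member $(U_i,c_i)$ of $S$ choose an open $V_i\subseteq U_i$ with $X\cap V_i = c_i$ (possible because $c_i$ is open in $X$: write $c_i = X\cap V_i'$ and intersect with $U_i$), and let $R$ be the cover $\{V_i\}_i\cup\{U\setminus\overline{c}^{\,Y}\}$; it covers $U$ by the separation fact, since $\bigcup_i V_i\supseteq\bigcup_i c_i = c$. For the pullback condition: any $W$ contained in some $V_i$ has $X\cap W\subseteq c_i\subseteq c$, so each component $c'$ of $X\cap W$ admits a morphism $(W,c')\to(U_i,c_i)$ and hence lies in $S$; while any $W$ contained in $U\setminus\overline{c}^{\,Y}$ has $X\cap W\subseteq(X\cap U)\setminus c$, so contributes no pairs. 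Thus $S$ is an amalgamation covering sieve, and $f$ is complete.

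For the converse, assume $Y$ is $T_1$ and $f$ is complete, and suppose toward a contradiction that some component $X'$ of $X$ is not closed in $Y$. Since $X'$ is always closed in $X$ we have $\overline{X'}^{\,Y}\cap X = X'$, so we may pick $y\in\overline{X'}^{\,Y}\setminus X$. Choose an open $V\subseteq Y$ with $X\cap V = X'$ and set $V_0 := V\setminus\{y\}$, which is open because $Y$ is $T_1$; then $X\cap V_0 = X'$ and $y\notin V_0$. Let $S$ be the sieve on $(Y,X')$ generated by $(V_0,X')\to(Y,X')$; it is $f$-covering since it already contains an object with component $X'$. Completeness makes $S$ an amalgamation covering sieve, so there is an open cover $R$ of $Y$ whose pullback to $(Y,X')$ is contained in $S$. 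Choose $W\in R$ with $y\in W$; because $y\in\overline{X'}^{\,Y}$ we have $W\cap X'\neq\emptyset$, and since $X'$ is clopen in $X$ the component $c'$ of $X\cap W$ through a point of $W\cap X'$ satisfies $c'\subseteq X'$. Then $(W,c')$ lies in the pullback of $R$, hence in $S$, which forces $W\subseteq V_0$; but $y\in W$ and $y\notin V_0$, a contradiction. Hence every component of $X$ is closed in $Y$.

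The main obstacle is the forward direction, and specifically the construction of the amalgamating cover $R$: one must separate the fixed component $c$ of $X\cap U$ from everything else in $U$ by a single open set of $Y$. This is precisely where closedness of the ambient component $X'$ in $Y$ enters — it forces $\overline{c}^{\,Y}$ to remain inside $X$, so that $U\setminus\overline{c}^{\,Y}$ sweeps up exactly the points of $U$ outside $c$, including those not in $X$ at all — together with local connectedness, which makes $c$ clopen in $X\cap U$ so that $\overline{c}^{\,Y}$ meets $X\cap U$ only in $c$. Once this separation lemma is in place, the remaining verifications (that the $V_i$ can be chosen with $X\cap V_i = c_i$ exactly, and that $R$ has the required pullback property) are routine bookkeeping with sieves.
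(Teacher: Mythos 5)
Your proof is correct and follows essentially the same route as the paper's: the same site of open subsets, the same amalgamating cover $\{V_i\}\cup\{U\setminus c\}$ in the forward direction (your $U\setminus\overline{c}^{\,Y}$ equals $U\setminus c$ by your separation claim, which is exactly the paper's parenthetical observation that $c$ is closed in $U$), and the same sieve-based contradiction at a point $y\in\overline{X'}\setminus X'$ in the converse. The only differences are cosmetic, such as indexing the trivializing opens by members of $S$ rather than by points of $c$, and shrinking to $V_0=V\setminus\{y\}$ rather than using $Y\setminus\{y\}$.
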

\begin{proof}
Write $i$ for the inclusion of $X$ into $Y$. We take the canonical site of open subsets for $\Sh(Y)$. In the notations of Definition \ref{def:complete}, an object of $\Conn(i)$ is then a pair $(U,c)$ with $U \subseteq Y$ open and $c \subseteq U \cap X$ a connected component.

Suppose that every connected component of $X$ is closed in $Y$. Let $f : \Sh(X) \to \Sh(Y)$ be the induced geometric morphism. Let $S = \{ (U_i,c_i) \to (U,c) \}_{i \in I}$ be an $f$-covering sieve. We claim that it is an amalgamation covering sieve as well. For each $x \in c$, take a pair $(U_i,c_i)$ in $S$ with $x \in c_i$. Since $c_i$ is open in $X \cap U_i$, we can take an open subset $V_x \subseteq U_i$ such that $V_x \cap X = c_i$. Note that $(V_x,c_i)$ is still contained in $S$. Now consider the covering sieve $R$ on $U$ generated by the inclusions $V_x \to U$ for $x \in c$ and the inclusion $U - c \to U$ (to show that $c$ is closed in $U$, use that $c$ is clopen in $X' \cap U$ for some connected component $X'$ of $X$, and that in turn $X' \cap U$ is closed in $U$). The pullback of $R$ to $(U,c)$ is the sieve generated by the inclusions $(V_x,V_x \cap X) \to (U,c)$, and this pullback sieve is contained in $S$. So $S$ is indeed an amalgamation covering sieve.

Conversely, suppose that the induced geometric morphism $f$ is complete. Take a connected component $X'$ of $X$ and an element $y \in \overline{X'}-X'$, with $\overline{X'}$ the closure of $X'$ in $Y$. Because $y$ is closed in $Y$ and $y \notin X$, we can consider the $f$-covering sieve generated by $(Y-\{y\},X') \to (Y,X')$. We claim that this is not an amalgamation covering sieve, which gives a contradiction. To see this, take an arbitrary covering sieve $R$ on $Y$. Then $R$ contains an inclusion $V \to Y$ with $y \in V$. Because $y \in \overline{X'}$, we see that $V \cap X' \neq \varnothing$. So if $S$ contains the pullback of $R$, then $S$ must contain a morphism $(V,c') \to (U,X')$ for some connected component $c' \subseteq V \cap X'$, which leads to a contradiction. As a result, $S$ is not an amalgamation covering sieve.
\end{proof}

\begin{remark}
We would be remiss not to also mention the (\textit{pure, entire}) factorization described by Johnstone in \cite[C3.4]{Ele}. A geometric morphism is \textit{entire} if it is localic and the corresponding internal locale is compact and zero-dimensional. For comparison, under the wider definition of complete spreads referenced in Remark \ref{rmk:widerdef}, any entire geometric morphism is a complete spread, but not conversely.

The (pure, entire) factorization of a morphism $f:\Fcal \to \Ecal$ is obtained by taking the intermediate topos to be the topos of internal sheaves on (the zero-dimensional locale dual to) the subframe of $f_*(\Omega_{\Fcal})$ generated by $f_*(2_{\Fcal})$. This is typically different from the factorizations we consider here. For example, for an infinite set $X$ viewed as a discrete space, the geometric morphism $\Sh(X) \to \Set$ is a complete spread, but its (pure, entire) factorization has sheaves on the Stone-\v{C}ech compactification of $X$ as the intermediate topos. The reason we do not extensively consider this factorization system in this paper is exactly the reason illustrated by that example: the intermediate topos in this factorization is rarely a presheaf topos, even for an essential geometric morphism between presheaf toposes. We leave deeper consideration of entire morphisms to future work.
\end{remark}

\subsection{The essential case}

The following proposition is the dual of Proposition \ref{prop:finfib}.

\begin{proposition}
\label{prop:iniopf}
Let $f : \PSh(\Ccal) \to \PSh(\Dcal)$ be an essential geometric morphism induced by a functor $F : \Ccal \to \Dcal$. Then $F$ has a factorization as an initial functor followed by a discrete opfibration, namely
\begin{equation*}
\Ccal \to \int^{\Dcal}g_!(1) \to \Dcal,
\end{equation*}
where $g:\PSh(\Ccal\op) \to \PSh(\Dcal\op)$ is the essential geometric morphism induced by $F\op$. This is the unique such factorization up to equivalence of the intermediate category. Further, the induced factorization of $f$ coincides with the (pure, complete spread) factorization of $f$.
\end{proposition}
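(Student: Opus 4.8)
The plan is to dualize the proof of Proposition~\ref{prop:finfib}, using that applying $(-)\op$ to a functor exchanges (initial, discrete opfibration) factorizations with (final, discrete fibration) factorizations. First I would record the factorization of $F$ itself. Since $g$ is induced by $F\op : \Ccal\op \to \Dcal\op$, Proposition~\ref{prop:finfib} applied to $g$ factors $F\op$ as a final functor followed by a discrete fibration through $\int_{\Dcal\op} g_!(1)$, uniquely up to equivalence of the intermediate category. Applying $(-)\op$ and recalling that $\int^{\Dcal} g_!(1) := (\int_{\Dcal\op} g_!(1))\op$ turns this into the asserted factorization $\Ccal \to \int^{\Dcal} g_!(1) \to \Dcal$ of $F$ into an initial functor followed by a discrete opfibration, together with its uniqueness; this is just the Street--Walters comprehensive factorization of~\cite{street-walters}. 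Passing to presheaf toposes gives a candidate factorization of $f$, say $h_2 \circ h_1$ with $h_1 : \PSh(\Ccal) \to \PSh(\Bcal)$ and $h_2 : \PSh(\Bcal) \to \PSh(\Dcal)$, where $\Bcal := \int^{\Dcal} g_!(1)$. By the uniqueness in Theorem~\ref{thm:pure-complete-spread-factorization-general}, it then suffices to show that $h_1$ is pure and $h_2$ is a complete spread.

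For $h_1$: the inducing functor $H : \Ccal \to \Bcal$ is initial, so $H\op$ is final and therefore preserves colimits of $\Set$-valued functors. Applying this with a presheaf $X$ on $\Bcal$ viewed as a functor $\Bcal\op \to \Set$, and writing $C$ for the connected-components functor of a presheaf topos, we obtain a natural isomorphism $C(h_1^*X) = \colim_{\Ccal\op}(X \circ H\op) \cong \colim_{\Bcal\op} X = C(X)$, so $h_1^*$ preserves connected objects. Since presheaf toposes are locally connected, Lemma~\ref{lem:characterization-pure-general} then yields that $h_1$ is \textbf{pure}.

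For $h_2$, with $P : \Bcal \to \Dcal$ the discrete opfibration, the computational heart is the natural isomorphism $h_2^*(\yon_{\Dcal}(d)) \cong \coprod_{b \in P^{-1}(d)} \yon_{\Bcal}(b)$, obtained by sending a morphism $\gamma : Pb' \to d$ to its unique $P$-opcartesian lift at $b'$ (every arrow of $\Bcal$ being opcartesian). Two consequences: first, each $\yon_{\Bcal}(b)$ is a direct summand, hence a complemented subobject, of $h_2^*(\yon_{\Dcal}(Pb))$; covering an arbitrary presheaf on $\Bcal$ by a coproduct of representables, and using that $h_2^*$ preserves coproducts and that coproducts of complemented subobjects are complemented, Proposition~\ref{prop:spreadchar} shows $h_2$ is a spread. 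Second, the representables $\yon_{\Bcal}(b)$ are precisely the connected components of $h_2^*(\yon_{\Dcal}(d))$, so — working with the site given by $\Dcal$ equipped with the trivial topology $J$, for which the only covering sieve of each object is the maximal one (this presents $\PSh(\Dcal)$, and is admissible because completeness is site-independent by Remark~\ref{rmk:widerdef}) — the category $\Conn(h_2)$ of Definition~\ref{def:complete} is equivalent to $\Bcal$, with its flat functor $\Conn(h_2) \to \PSh(\Bcal)$ being the Yoneda embedding. As the only $J$-covering sieves are maximal, the amalgamation covering sieves on an object $b$ of $\Conn(h_2) \simeq \Bcal$ are exactly the maximal sieves. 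But an $h_2$-covering sieve $S$ on $b$ is, by definition, one for which $\{\yon_{\Bcal}(a) : a \in S\}$ is jointly epimorphic in $\PSh(\Bcal)$; testing this at $\id_b \in \yon_{\Bcal}(b)(b)$ produces some $a \in S$ and a section $s$ with $a \circ s = \id_b$, whence $\id_b = a \circ s \in S$ and $S$ is maximal. So every $h_2$-covering sieve is an amalgamation covering sieve, i.e. $h_2$ is complete, hence a \textbf{complete spread}.

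Having factored $f$ as a pure morphism followed by a complete spread, Theorem~\ref{thm:pure-complete-spread-factorization-general} identifies this with the (pure, complete spread) factorization of $f$, which completes the proof. The main obstacle I anticipate is the last step of the complete-spread verification: establishing the isomorphism $h_2^*(\yon_{\Dcal}(d)) \cong \coprod_{b \in P^{-1}(d)} \yon_{\Bcal}(b)$ carefully, reading off the resulting description of $\Conn(h_2)$, and then running the (site-independent) completeness criterion. The functor-level factorization and the purity of $h_1$ are routine dualizations of material already present in the paper.
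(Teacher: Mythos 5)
Your proposal is correct. For the functor-level factorization you do exactly what the paper does: dualize Proposition \ref{prop:finfib} via Street--Walters, using $\int^{\Dcal}Y \cong (\int_{\Dcal\op}Y)\op$ and the fact that $(-)\op$ exchanges initial with final functors and discrete fibrations with discrete opfibrations. Where you diverge is the topos-level claim: the paper simply cites Example 2.16(2) of \cite{bunge-funk-spreads-I} for the fact that an (initial, discrete opfibration) factorization of the functor induces the (pure, complete spread) factorization of $f$, whereas you verify this directly and then invoke the uniqueness clause of Theorem \ref{thm:pure-complete-spread-factorization-general}. Your verification is sound: finality of $H\op$ identifies $C \circ h_1^*$ with $C$, so $h_1^*$ preserves connected objects and Lemma \ref{lem:characterization-pure-general} gives purity; the decomposition $h_2^*(\yon_{\Dcal}(d)) \cong \coprod_{b \in P^{-1}(d)}\yon_{\Bcal}(b)$ (valid precisely because $P$ is a discrete opfibration, by existence and uniqueness of lifts) yields both the spread property via Proposition \ref{prop:spreadchar} and the identification $\Conn(h_2) \simeq \Bcal$, after which the completeness check with the trivial topology reduces to the observation that a jointly epimorphic sieve of representables over $\yon_{\Bcal}(b)$ must contain $\id_b$ and hence be maximal. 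The trade-off is the usual one: the citation is shorter, while your argument is self-contained, stays entirely within the characterizations already established in the paper (Lemma \ref{lem:characterization-pure-general}, Proposition \ref{prop:spreadchar}, Definition \ref{def:complete}), and makes visible why discrete opfibrations are exactly the complete spreads over a presheaf topos.
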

\begin{proof}
The unique factorization of a functor into an initial functor followed by a discrete opfibration is due to Street and Walters \cite{street-walters}. Explicitly, we can obtain this factorization by applying the factorization from Proposition \ref{prop:finfib} to 
\begin{equation*}
F\op: \Ccal\op\to \Dcal\op
\end{equation*}
and then dualizing; recall that a functor is final if and only if its opposite functor is initial, and similarly a functor is a discrete fibration if and only if its opposite is a discrete opfibration. We are also using that, identifying a copresheaf $Y$ on $\Dcal$ with a presheaf on $\Dcal\op$, we have (or may define),
\begin{equation*}
\int^{\Dcal}Y \cong \left(\int_{\Dcal\op} Y \right)\op.
\end{equation*}
That this produces the (pure, complete spread) factorization at the level of geometric morphisms is given as Example 2.16(2) in \cite{bunge-funk-spreads-I}.
\end{proof}

Given this construction, we can immediately dualize the results of Section \ref{sec:tc-etale} to get the corresponding results for the (pure, complete spread) factorization of a geometric morphism $f : \Setswith{M} \to \Setswith{N}$ induced by a semigroup homomorphism $\phi : M \to N$. We first introduce the dual of Definition \ref{dfn:rfactor}.

\begin{dfn}
\label{dfn:lfactor}
A subset $S$ of a monoid $M$ is called \textbf{left-factorable} if whenever $x \in M$ and $y \in S$ with $xy \in S$, then $x \in S$. For an arbitrary subset $T$, we define $\langle\langle{T}\rangle_M \subseteq M$ to be the smallest left-factorable submonoid of $M$ containing $T$, and call this the submonoid of $M$ \textbf{left-factorably generated} by $T$.
\end{dfn}

For a geometric morphism $f : \Setswith{M} \to \Setswith{N}$ induced by a semigroup homomorphism $\phi : M \to N$, we constructed in Section \ref{sec:tc-etale} a factorization
\begin{equation*}
\begin{tikzcd}
\Setswith{M} \ar[r,"{\text{tc surj.}}"] & \Setswith{\langle{\phi(M)}\rangle\rangle_{eNe}} \ar[d,"{\text{tc incl.}}"] & & \\
& \Setswith{\langle{\phi(M)}\rangle\rangle_N} \ar[r,"{\text{tc incl.}}"] & \PSh(\int_N 1\otimes_M eN) \ar[d,"{\text{\'etale}}"] & \\
& & \Setswith{N}.
\end{tikzcd}
\end{equation*}

If we apply this factorization to $\phi\op : M\op \to N\op$ and then take opposites, then we get the factorization
\begin{equation} \label{eq:factorization-parts}
\begin{tikzcd}
\Setswith{M} \ar[r,"{\text{pure surj.}}"] & \Setswith{\langle\langle{\phi(M)}\rangle_{eNe}} \ar[d,"{\text{pure incl.}}"] & & \\
& \Setswith{\langle\langle{\phi(M)}\rangle_N} \ar[r,"{\text{pure incl.}}"] & \PSh(\int^N eN \otimes_M 1)
 \ar[d,"{\text{complete spread}}"] & \\
& & \Setswith{N}.
\end{tikzcd}
\end{equation}
Here we have made use of the following equalities:
\begin{gather*}
\left(\langle{\phi(M\op)}\rangle\rangle_{N\op} \right)\op = \langle\langle{\phi(M)}\rangle_N,\qquad
\left(\langle{\phi(M\op)}\rangle\rangle_{(eNe)\op} \right)\op = \langle\langle{\phi(M)}\rangle_{eNe}, \\
\left(\int_{N\op} Y\right)\op = \int^N Y,\qquad 1 \otimes_{M\op} X = X \otimes_M 1,
\end{gather*}
with $Y$ a left $N$-set and $X$ a right $M$-set. We also observe that the (surjection, inclusion) factorization is self-dual. 

We can deduce from this the dual to Corollary \ref{cor:tc}.

\begin{crly}
\label{crly:pure-essential-case}
A geometric morphism $f : \Setswith{M} \to \Setswith{N}$ induced by a semigroup homomorphism $\phi:M \to N$ is pure if and only if $\langle\langle{\phi(M)}\rangle_{N} = N$.
\end{crly}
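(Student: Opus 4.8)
The plan is to deduce this as the formal dual of Corollary \ref{cor:tc}, in exactly the way the factorization \eqref{eq:factorization-parts} was deduced from the (terminal-connected, \'etale) factorization. Write $g : \Setswith{M\op} \to \Setswith{N\op}$ for the essential geometric morphism induced by $\phi\op : M\op \to N\op$. Since $\Setswith{M}$ is locally connected, Theorem \ref{thm:pure-complete-spread-factorization-general} applies; by the uniqueness part of that theorem, $f$ is pure if and only if the complete spread part of its (pure, complete spread) factorization is an equivalence. Dually, by uniqueness of the (terminal-connected, \'etale) factorization, $g$ is terminal-connected if and only if its \'etale part is an equivalence.

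Next I would invoke Propositions \ref{prop:finfib} and \ref{prop:iniopf}, which together identify the (pure, complete spread) factorization of $f$ with the dual of the (terminal-connected, \'etale) factorization of $g$: this is precisely the computation that produces \eqref{eq:factorization-parts} from the displayed factorization preceding it. In particular the complete spread part of $f$, namely $\PSh(\int^N eN \otimes_M 1) \to \Setswith{N}$, is the opposite of the \'etale part of $g$, so one is an equivalence if and only if the other is. Combining this with the previous paragraph, $f$ is pure if and only if $g$ is terminal-connected. Applying Corollary \ref{cor:tc} to the semigroup homomorphism $\phi\op$ now gives that $g$ is terminal-connected if and only if $\langle{\phi(M\op)}\rangle\rangle_{N\op} = N\op$; taking opposites and using the identity $(\langle{\phi(M\op)}\rangle\rangle_{N\op})\op = \langle\langle{\phi(M)}\rangle_N$ recorded after \eqref{eq:factorization-parts}, this becomes $\langle\langle{\phi(M)}\rangle_N = N$, as required.

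The one step that deserves care is the claim in the second paragraph that the complete spread part of $f$ is genuinely the opposite of the \'etale part of $g$, rather than just both factorization systems existing in parallel; once \eqref{eq:factorization-parts} is established this is immediate by inspection, and the underlying point is simply that a functor induces an equivalence of presheaf toposes if and only if its opposite does. If one preferred a self-contained argument avoiding the dualized factorization, one could instead mimic the proof of Corollary \ref{cor:tc} directly, using that $f$ is pure iff $f_*$ preserves finite coproducts together with the $\lr{M}{N}$-set $\phi(1)N$ presenting $f_!$ from Lemma \ref{lem:esstensor} and the left-handed analogue of Lemma \ref{lem:(M))}; but the dualization route is shorter, and is the one I would present.
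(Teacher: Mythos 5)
Your proposal is correct and is essentially the paper's own argument: the corollary is stated there as an immediate consequence of the dualized factorization \eqref{eq:factorization-parts}, i.e.\ of identifying the (pure, complete spread) factorization of $f$ with the opposite of the (terminal-connected, \'etale) factorization of the morphism induced by $\phi\op$ via Propositions \ref{prop:finfib} and \ref{prop:iniopf}, and then invoking Corollary \ref{cor:tc} together with $(\langle{\phi(M\op)}\rangle\rangle_{N\op})\op = \langle\langle{\phi(M)}\rangle_N$. The alternative self-contained route you sketch at the end is also acknowledged in the paper, via Proposition \ref{prop:characterization-pure} applied to $A = Ne$.
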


Dualizing the argument we saw in Section \ref{sec:tc-etale}, we deduce that essential complete spreads induced by semigroup homomorphisms are surjective. This produces the following dual to Theorem \ref{thm:etale}.

\begin{theorem}
\label{thm:complete-spread}
Let $f : \Setswith{M} \to \Setswith{N}$ be an essential geometric morphism induced by a monoid homomorphism $\phi : M \to N$. Then the following are equivalent:
\begin{enumerate}
\item $f$ is a complete spread;
\item $\phi$ is injective, $\phi(M) \subseteq N$ is left-factorable and for any $n \in N$ there is some $v \in N^\rtimes$ such that $vn \in \phi(M)$.
\end{enumerate}
More generally, if $\phi$ is merely a semigroup homomorphism, then $f$ is \'etale if and only if the monoid homomorphism part of $\phi$ satisfies the conditions above, and the inclusion $eNe \subseteq N$ induces an equivalence, where $e = \phi(1)$.
\end{theorem}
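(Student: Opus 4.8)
The plan is to deduce Theorem~\ref{thm:complete-spread} by transporting the proof of Theorem~\ref{thm:etale} through the duality already recorded in the factorization~\eqref{eq:factorization-parts}. Since $\Setswith{M}$ is a presheaf topos it is locally connected, so the (pure, complete spread) factorization of $f$ exists and is unique up to compatible equivalence of the intermediate topos (Theorem~\ref{thm:pure-complete-spread-factorization-general}); hence $f$ is a complete spread if and only if the pure part of its factorization is an equivalence. By~\eqref{eq:factorization-parts} this pure part is the composite
\[
\Setswith{M} \longrightarrow \Setswith{\langle\langle{\phi(M)}\rangle_{eNe}} \longrightarrow \PSh\left(\int^{N} eN \otimes_{M} 1\right),
\]
a surjection followed by an inclusion, and this is precisely the (surjection, inclusion) factorization of the pure part; so by uniqueness of that factorization, the pure part is an equivalence if and only if both displayed morphisms are equivalences (the ``only if'' using that an equivalence is both a surjection and an inclusion). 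The theorem therefore reduces to deciding when the ``pure surjection'' part $\Setswith{M}\to\Setswith{\langle\langle{\phi(M)}\rangle_{eNe}}$ and the ``pure inclusion'' part $\Setswith{\langle\langle{\phi(M)}\rangle_{eNe}}\to\PSh(\int^{N} eN\otimes_{M} 1)$ are equivalences.

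First I would dualize the two propositions preceding Theorem~\ref{thm:etale}. The first of them characterizes when the terminal-connected surjection part of~\eqref{eq:tc-etale-monoid-morphism} is an equivalence; applying it to $\phi\op : M\op\to N\op$ and taking opposites --- noting that $\phi\op(1)=e$, that ``right-factorable in $N\op$'' reads as ``left-factorable in $N$'', and that $(\langle{\phi(M\op)}\rangle\rangle_{(eNe)\op})\op=\langle\langle{\phi(M)}\rangle_{eNe}$ by the displays following~\eqref{eq:factorization-parts} --- shows that the pure surjection part, which is induced by the monoid homomorphism $M\to\langle\langle{\phi(M)}\rangle_{eNe}$, is an equivalence if and only if $\phi$ is injective and $\phi(M)$ is a left-factorable submonoid of $eNe$. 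Dualizing Proposition~\ref{prop:tcincequiv} in the same way, and using in addition that $(N\op)^{\ltimes}=N^{\rtimes}$, shows that the pure inclusion part is an equivalence if and only if for every $n\in N$ there is some $v\in N^{\rtimes}$ with $vn\in\langle\langle{\phi(M)}\rangle_{N}$.

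Next I would assemble the pieces in the monoid-homomorphism case, where $e=1$ and hence $eNe=N$. If both conditions above hold then $\phi(M)$ is left-factorable in $N$, so $\langle\langle{\phi(M)}\rangle_{N}=\phi(M)$ and the second condition becomes precisely the requirement that for every $n\in N$ there is $v\in N^{\rtimes}$ with $vn\in\phi(M)$; conversely, condition~(2) forces $\langle\langle{\phi(M)}\rangle_{N}=\phi(M)$ and hence makes both the pure surjection and pure inclusion parts equivalences, whence $f$ is a complete spread. This gives the equivalence $(1)\Leftrightarrow(2)$. For a general semigroup homomorphism $\phi$, I would use the fact --- deduced before the statement of the theorem by dualizing the observation in Section~\ref{sec:tc-etale} that an \'etale morphism from a non-degenerate topos is a surjection --- that an essential complete spread induced by a semigroup homomorphism is a surjection. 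Then Corollary~\ref{crly:surj-inj} forces the inclusion part $\Setswith{eNe}\to\Setswith{N}$ of the (surjection, inclusion) factorization of $f$ to be an equivalence, so $f$ is a complete spread if and only if the surjection part $\Setswith{M}\to\Setswith{eNe}$ --- induced by the monoid homomorphism $M\to eNe$ --- is a complete spread, which is settled by the first part of the theorem applied with $eNe$ in place of $N$.

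The step I expect to be most delicate is purely bookkeeping: ensuring that every left/right asymmetry is dualized consistently, so that ``right-factorable'' becomes ``left-factorable'' (Definition~\ref{dfn:lfactor}), $N^{\ltimes}$ becomes $N^{\rtimes}$, $\int_{N\op}$ becomes $\int^{N}$, and the reduction to the monoid case genuinely mirrors the argument in Section~\ref{sec:tc-etale} --- the last point resting ultimately on $\PSh(N)$ being hyperconnected, so that its non-initial objects are well-supported and the final complete spread in~\eqref{eq:factorization-parts} is automatically a surjection.
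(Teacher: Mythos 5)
Your proposal is correct and follows essentially the same route as the paper: the paper obtains Theorem~\ref{thm:complete-spread} precisely by applying the (terminal-connected, \'etale) analysis of Section~\ref{sec:tc-etale} to $\phi\op : M\op \to N\op$ and dualizing through the factorization~\eqref{eq:factorization-parts}, which is exactly the bookkeeping you carry out (including the reduction of the semigroup case to the monoid case via surjectivity of complete spreads). The only cosmetic difference is that you spell out the dualization of the two propositions preceding Theorem~\ref{thm:etale} explicitly, whereas the paper leaves this implicit.
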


The condition that for any $n \in N$ there is some $v \in N^\rtimes$ such that $vn \in \phi(M)$, corresponds to the essential inclusion $\Setswith{\langle\langle{\phi(M)}\rangle_N} \to \PSh(\int^N Ne \otimes_M 1)$ being an equivalence. Further, the condition that $\phi$ is injective with $\phi(M) \subseteq N$ left-factorable, corresponds to the condition that the pure surjection part $\Setswith{M} \to \Setswith{\langle\langle{\phi(M)}\rangle_N}$ is an equivalence. The geometric morphisms such that the pure surjection part is trivial are precisely the spreads, so:

\begin{proposition}
Let $f : \Setswith{M} \to \Setswith{N}$ be an essential geometric morphism induced by a monoid homomorphism $\phi : M \to N$. Then $f$ is a spread if and only if $\phi$ is injective and $\phi(M) \subseteq N$ is left-factorable.
\end{proposition}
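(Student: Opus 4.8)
The plan is to read the statement off from the (pure surjection, spread) factorization \eqref{eq:factorization-parts}. By Theorem \ref{thm:pure-complete-spread-factorization-general}, since $\Setswith{M}$ is locally connected, $f$ has a unique factorization as a pure surjection followed by a spread, and \eqref{eq:factorization-parts} exhibits it (the composite of the last three arrows there is a spread, being a pure inclusion followed by a pure inclusion followed by a complete spread, and inclusions are spreads). By uniqueness, $f$ is a spread if and only if the pure surjection part of \eqref{eq:factorization-parts} is an equivalence: if $f$ is a spread then $f = \id \circ f$ is a (pure surjection, spread) factorization, which must coincide with the canonical one up to equivalence of the intermediate topos, forcing the pure surjection part to be an equivalence; conversely, if the pure surjection part is an equivalence then $f$ is equivalent to its (spread) second factor.

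Next I would identify that pure surjection part. Since $\phi$ is a monoid homomorphism we have $e := \phi(1) = 1$, hence $eNe = N$ and $eN = N$, so the first arrow of \eqref{eq:factorization-parts} is the essential geometric morphism $\Setswith{M} \to \Setswith{\langle\langle{\phi(M)}\rangle_N}$ induced by the corestricted monoid homomorphism $\theta : M \to \langle\langle{\phi(M)}\rangle_N$, $m \mapsto \phi(m)$ (here $\langle\langle{\phi(M)}\rangle_N$ is a submonoid of $N$ containing the submonoid $\phi(M)$, using that $\phi$ is a monoid homomorphism). Arguing exactly as in the proof that the terminal-connected surjection part in \eqref{eq:tc-etale-monoid-morphism} is an equivalence precisely when $\phi$ is injective with right-factorable image, this essential geometric morphism is an equivalence if and only if $\theta$ is a bijection: essential geometric morphisms induced by monoid homomorphisms are surjections by Corollary \ref{crly:surj-inj}, so such a morphism is an equivalence exactly when it is additionally an inclusion, which by Lemma \ref{lem:essfacts} happens exactly when the induced functor on idempotent completions is full and faithful; since that functor acts as $\theta$ on the endomorphisms of the object $\underline{1}$, this is in turn equivalent to $\theta$ being a bijection.

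It then remains to unwind this bijectivity condition. Injectivity of $\theta$ is just injectivity of $\phi$. Surjectivity of $\theta$ means $\phi(M) = \langle\langle{\phi(M)}\rangle_N$; since by Definition \ref{dfn:lfactor} $\langle\langle{\phi(M)}\rangle_N$ is the smallest left-factorable submonoid of $N$ containing the submonoid $\phi(M)$, this equality holds if and only if $\phi(M)$ is itself left-factorable in $N$. Combining the two conditions yields the proposition. I expect no genuine obstacle here: all of the substance is packaged in the factorization \eqref{eq:factorization-parts} and in the characterization of equivalences among essential morphisms induced by monoid homomorphisms; the only point requiring a little care is the first step, namely that $f$ being a spread is equivalent to the pure surjection part of its canonical factorization being an equivalence, which is a formal consequence of the uniqueness clause in Theorem \ref{thm:pure-complete-spread-factorization-general}.
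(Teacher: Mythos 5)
Your proposal is correct and follows essentially the same route as the paper: the paper likewise reads the result off from the (pure surjection, spread) factorization in \eqref{eq:factorization-parts}, observing that $f$ is a spread precisely when the pure surjection part $\Setswith{M} \to \Setswith{\langle\langle{\phi(M)}\rangle_N}$ is an equivalence, and that this happens exactly when the corestricted monoid homomorphism is a bijection, i.e.\ when $\phi$ is injective with left-factorable image. The only difference is that you spell out the uniqueness argument and the ``surjection $+$ inclusion $=$ equivalence'' step in more detail than the paper does.
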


More generally, if $f$ is induced by a semigroup homomorphism $\phi: M \to N$, then the pure surjection part is given by $\PSh(M) \to \PSh(\langle\langle{\phi(M)}\rangle_{eNe})$, as shown in \eqref{eq:factorization-parts}. Again, $f$ is a spread if and only if the pure surjection part is trivial, so if and only if $\phi$ is injective and $\phi(M) \subseteq eNe$ is left-factorable.

Finally, we give an updated version of Example \ref{eg:etale-geometric-morphisms}.

\begin{example} \label{eg:etale-and-complete-spread} \ 
\begin{enumerate}
\item For $H \subseteq G$ an inclusion of groups, we have that the induced geometric morphism $\Setswith{H} \to \Setswith{G}$ is both \'etale and a complete spread.
\item Consider the monoid $\Zbb_p^\ns$ of nonzero $p$-adic integers under multiplication. Then the inclusion $\mathbb{N} \to \Zbb_p^\ns,~ k \mapsto p^k$ induces an essential geometric morphism $\Setswith{\Nbb} \to \Setswith{\Zbb_p^\ns}$ that is both \'etale and complete spread.
\end{enumerate}
\end{example}

In general, an \'etale geometric morphism $f: \PSh(M) \to \PSh(N)$ is not necessarily a complete spread (and vice versa). In Subsection \ref{ssec:matrix}, we give an extreme example of a geometric morphism which is both terminal-connected and a complete spread (so it has trivial \'etale part), and dually, an example of a morphism which is both pure and \'etale (so the complete spread part is trivial).

\subsection{The general case}

As we alluded to earlier, the (pure, complete spread) factorization works for general geometric morphisms with locally connected domain. We follow the construction of \cite[Proposition 2.11]{bunge-funk-spreads-I} in our special case. A geometric morphism $f: \Setswith{M} \to \Setswith{N}$ with $f^*(X) = X \otimes_N A$ has (pure, complete spread) factorization of the following form,
\begin{equation} \label{eq:pure-complete-spread-factorization}
\begin{tikzcd}
\PSh(M) \ar[r,"{\eta}"] & \PSh(\int^N A \otimes_M 1) \ar[r,"{\pi}"] & \PSh(N),
\end{tikzcd}
\end{equation}
where $\pi$ is the essential geometric morphism induced by the discrete opfibration
\begin{equation*}
\begin{tikzcd}
\int^N A \otimes_M 1 \ar[r] & N
\end{tikzcd}
\end{equation*}
and $\eta$ is the geometric morphism determined by the flat functor
\begin{equation*}
\begin{tikzcd}
\int^N A \otimes_M 1 \ar[r,"{V}"] & \Setswith{M}
\end{tikzcd}
\end{equation*}
which sends an element $c \in A \otimes_M 1$ to the corresponding component of $A$ as right $M$-set.

From the above discussion, we deduce:
\begin{proposition} \label{prop:complete-spreads-are-essential}
Let $f : \PSh(M) \to \PSh(N)$ be a complete spread, for monoids $M$ and $N$. Then $f$ is essential.
\end{proposition}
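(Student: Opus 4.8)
The plan is to exploit the explicit description of the (pure, complete spread) factorization given in equation \eqref{eq:pure-complete-spread-factorization} together with Theorem \ref{thm:pure-complete-spread-factorization-general}. Given a complete spread $f : \PSh(M) \to \PSh(N)$, we know $\PSh(M)$ is locally connected (every presheaf topos is), so $f$ admits a (pure, complete spread) factorization, and by the construction recalled above this factorization has the form $\PSh(M) \xrightarrow{\eta} \PSh\left(\int^N A \otimes_M 1\right) \xrightarrow{\pi} \PSh(N)$, where $A$ is the $\fllr{N}{M}$-set inducing $f$.

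First I would invoke the uniqueness clause of Theorem \ref{thm:pure-complete-spread-factorization-general}: since $f$ is itself a complete spread, its (pure, complete spread) factorization must have pure part an equivalence. Hence $\eta : \PSh(M) \to \PSh\left(\int^N A \otimes_M 1\right)$ is an equivalence of toposes. Next, $\pi$ is by construction the essential geometric morphism induced by the discrete opfibration $\int^N A \otimes_M 1 \to N$ (see Proposition \ref{prop:iniopf} and the surrounding discussion), so in particular $\pi$ is essential. Therefore $f \simeq \pi \circ \eta$ is the composite of an essential geometric morphism with an equivalence, and composites of essential geometric morphisms are essential (the extra left adjoint is obtained by composing the left adjoints), so $f$ is essential.

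The one point requiring a little care, and the main obstacle, is justifying that the pure part of the factorization really is an equivalence rather than merely a surjection or some other distinguished morphism: this uses the orthogonality of the (pure, complete spread) factorization system, i.e.\ that complete spreads are precisely the morphisms whose pure part is an equivalence (equivalently, that a morphism which is both pure and a complete spread is an equivalence). This is part of the content of Theorem \ref{thm:pure-complete-spread-factorization-general} as stated, since uniqueness of the factorization forces the pure part of a complete spread to be an equivalence; alternatively one can note that a pure morphism $\eta$ which is also (part of) a complete spread must have $\eta_*$ preserving finite coproducts and $\eta^*$ faithful up to the complemented-subobject condition, forcing $\eta$ to be an equivalence. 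In any case, once $\eta$ is an equivalence the result is immediate, so I would present the argument concisely, citing Theorem \ref{thm:pure-complete-spread-factorization-general} for uniqueness and the explicit form \eqref{eq:pure-complete-spread-factorization} for the essentialness of $\pi$.
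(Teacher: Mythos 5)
Your argument is correct and is essentially the paper's own: the proposition is deduced from the explicit (pure, complete spread) factorization \eqref{eq:pure-complete-spread-factorization}, whose complete spread part $\pi$ is essential by construction (induced by a discrete opfibration) and whose pure part must be an equivalence by uniqueness of the factorization when $f$ is itself a complete spread. The point you flag as needing care — that the pure part of a complete spread is an equivalence — is indeed settled by the uniqueness clause of Theorem \ref{thm:pure-complete-spread-factorization-general}, exactly as you say.
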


So complete spreads $\PSh(M) \to \PSh(N)$ are completely characterized by Theorem \ref{thm:complete-spread}. Note that from the work of Bunge and Funk \cite[Proposition 2.11]{bunge-funk-spreads-I} it follows more generally that any complete spread between presheaf toposes is essential. We shall see that this is not the case for pure geometric morphisms.

In the setting of this paper, it is natural to ask when the intermediate topos $\PSh(\int^N A \otimes_M 1)$ in \ref{eq:pure-complete-spread-factorization} is equivalent to $\PSh(B)$ for some monoid $B$. 

\begin{proposition} \label{prop:intermediate-topos-is-monoid-topos}
Let $f : \PSh(M) \to \PSh(N)$ be the geometric morphism determined by a $\fllr{N}{M}$-set $A$. Then $\PSh(\int^N A \otimes_M 1)\simeq \PSh(B)$ for some monoid $B$ if and only if there is some $c \in A \otimes_M 1$ such that for every $c' \in A \otimes_M 1$ there is some $v \in N^\rtimes$ such that $vc' = c$. In this case, the (pure, complete spread) factorization has the following more concrete description. We can take $B = \{ n \in N : nc = c \}$. Let $A' \subseteq A$ be the component of $A$ corresponding to $c$. Then the left $N$-action on $A$ restricts to a left $B$-action on $A'$, and $A'$ is flat as left $B$-set. The geometric morphism $\eta : \PSh(M) \to \PSh(B)$ is determined by the $\fllr{B}{M}$-set $A'$, and the (essential) geometric morphism $\pi: \PSh(B) \to \PSh(N)$ is induced by the monoid inclusion $B \subseteq N$. 
\end{proposition}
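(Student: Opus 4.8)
The plan is to apply Lemma~\ref{lem:retracted} to the category $\Dcal := \int^N (A \otimes_M 1)$ appearing in \eqref{eq:pure-complete-spread-factorization}, unwind its retract condition, and then transport the already-established (pure, complete spread) factorization across the resulting equivalence $\PSh(\Dcal) \simeq \PSh(B)$. Concretely, since $A \otimes_M 1$ is the set of components of $A$ as a right $M$-set with its induced left $N$-action, the category $\Dcal$ has the elements $c$ of $A \otimes_M 1$ as objects, a morphism $c \to c'$ is an element $n \in N$ with $nc = c'$, and composition is multiplication in $N$. By Lemma~\ref{lem:retracted}, $\PSh(\Dcal) \simeq \PSh(B)$ for some monoid $B$ if and only if there is an object $c$ of which every other object is a retract, and then we may take $B = \End_{\Dcal}(c) = \{n \in N : nc = c\}$. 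Now $c'$ is a retract of $c$ precisely when there are $n_r, n_s \in N$ with $n_r c = c'$, $n_s c' = c$ and $n_r n_s = 1$: the last equation is forced because morphisms of $\Dcal$ are determined by their index, so $r \circ s = \id_{c'}$ means the indices $n_r n_s$ and $1$ agree. Taking $v = n_s$ shows this implies the existence of $v \in N^{\rtimes}$ with $vc' = c$; conversely, from $vc' = c$ and $uv = 1$ the pair $n_r = u$, $n_s = v$ works. This establishes the stated criterion together with the formula for $B$.

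For the concrete description, I would realize $\PSh(B) \simeq \PSh(\Dcal)$ via the inclusion of $B = \End_\Dcal(c)$ as the full subcategory of $\Dcal$ on $c$, under which the representable right $B$-set $B$ corresponds to $\yon_\Dcal(c)$. The morphism $\pi$ is induced by the discrete opfibration $\Dcal \to N$, and composing this with $B \hookrightarrow \Dcal$ gives exactly the monoid inclusion $B \subseteq N$; hence $\pi$ is the essential geometric morphism it induces. The morphism $\eta$ is induced by the flat functor $V : \Dcal \to \PSh(M)$ with $V(c')$ the component $A_{c'} \subseteq A$, so the $\fllr{B}{M}$-set corresponding to $\eta$ under Theorem~\ref{thm:N-M-sets} is $\eta^*(\yon_\Dcal(c)) = V(c) = A'$; its left $B$-action, induced by $V$ on the endomorphisms $b : c \to c$ (which act on $A_c$ by $a \mapsto ba$), is the restriction of the left $N$-action on $A$, which does land in $A'$ since $a \in A'$ gives $[ba] = bc = c$. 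Finally, flatness of $A'$ as a left $B$-set is automatic: $V$ is flat, so $\eta$ is a genuine geometric morphism, and hence its associated $(B,M)$-biaction is flat by Theorem~\ref{thm:N-M-sets}.

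The only delicate point is the bookkeeping around handedness: one must track the direction of morphisms and of composition in $\int^N A \otimes_M 1$ carefully enough that the retract condition comes out as ``$vc' = c$ with $v$ left-invertible'' rather than its mirror image, and follow the representable through $\PSh(B) \simeq \PSh(\Dcal)$ so as to identify $\eta^*(B)$ with $A'$ carrying the correct $B$-action. Beyond this I do not anticipate any substantial obstacle.
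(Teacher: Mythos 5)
Your proposal is correct and follows essentially the same route as the paper's proof: apply Lemma \ref{lem:retracted} to $\int^N A \otimes_M 1$, unwind the retract condition to get the criterion ``$vc' = c$ for some $v \in N^\rtimes$'' with $B$ the endomorphism monoid of $c$, and then read off $\pi$ from the composite $B \hookrightarrow \int^N A\otimes_M 1 \to N$ and $\eta$ from the image of the representable at $c$ under the flat functor $V$. Your additional checks (the composition bookkeeping forcing $n_r n_s = 1$, and deducing flatness of $A'$ from Theorem \ref{thm:N-M-sets} applied to the pure part) are correct and slightly more explicit than the paper's.
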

\begin{proof}
By Lemma \ref{lem:retracted}, there is an equivalence $\PSh(\int^N A \otimes_M 1) \simeq \PSh(B)$ if and only if there is an object $c$ of $\int^N A \otimes 1$ of which every object in $\int^N A \otimes_M 1$ is a retract. Now $c'$ is a retract of $c$ if and only if there are $u,v \in N$ with $uv = 1$ and $vc' = c$ and $uc = c'$ (the last equation follows from the first two). The stated conditions follow, with $B$ the endomorphism monoid of $c$ in $\int^N A \otimes_M 1$.

The equivalence $\PSh(B) \to \PSh(\int^N A \otimes_M 1)$ is induced by the inclusion of $B$ as a full subcategory of $\int^N A \otimes_N 1$ (as the endomorphism monoid of $c$). If we compose this with the projection to $N$, then we get the monoid inclusion $B \subseteq N$. By the construction of the (pure, complete spread) factorization, the complete spread part $\pi : \PSh(B) \to \PSh(N)$ is the geometric morphism induced by this monoid inclusion $B \subseteq N$. Meanwhile, to compute the $\fllr{B}{M}$-set $A'$ inducing $\eta$, we identify $B$ with the representable presheaf in $\PSh(\int^N A \otimes_M 1)$ corresponding to the element $c$. By construction, this is sent by $\eta$ to the $M$-connected component of $A$ corresponding to $c$. So the corresponding flat functor $V : B \to \PSh(M)$ is the one given by this component, with the action of $B$ coming from the images under $\eta$ of the endomorphisms of $c$.
\end{proof}

The form of the pure part should come as no surprise after the following characterization.

\begin{proposition} \label{prop:characterization-pure}
Let $f : \Setswith{M} \to \Setswith{N}$ be the geometric morphism corresponding to the $\fllr{N}{M}$-set $A$. Then $f$ is pure if and only if $A$ is connected as a right $M$-set.
\end{proposition}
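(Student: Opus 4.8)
The plan is to deduce everything from Lemma \ref{lem:characterization-pure-general}. Both $\Setswith{M}$ and $\Setswith{N}$ are presheaf toposes, hence locally connected over $\Set$ (the global sections functor admits the connected-components functor $C$ of \eqref{eq:can-point-and-global-sections} as a further left adjoint), so that lemma applies to $f$. Its clause $(2)$ says that $f$ is pure if and only if $f^*$ preserves connected objects. Now recall that, by the construction of the equivalence in Theorem \ref{thm:N-M-sets}, the inverse image of $f$ is $f^* = -\otimes_N A$ and, taking $N$ to denote the unique representable presheaf on $N$ (the right regular representation), we have $f^*(N) = A$ as a right $M$-set. So the whole statement will follow once we observe that $N$, as an object of $\Setswith{N}$, is simultaneously \emph{connected} and a \emph{generator}.

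For the forward implication, suppose $f$ is pure. The right $N$-set $N$ is connected, since $1\cdot n = n$ exhibits every element of $N$ in the same component as the generator $1$; applying clause $(2)$ of Lemma \ref{lem:characterization-pure-general}, the object $f^*(N) = A$ is connected as a right $M$-set. For the converse, suppose $A$ is connected as a right $M$-set. Since every right $N$-set is a quotient of a coproduct of copies of $N$, the singleton $\{N\}$ is a generating family for $\Setswith{N}$, and by the remark immediately following the proof of Lemma \ref{lem:characterization-pure-general} it suffices, in order to conclude that $f$ is pure, to check that $f^*(X)$ is connected for each $X$ in such a generating family. Here this reduces to a single condition, namely that $f^*(N) = A$ be connected, which is precisely our hypothesis; hence $f$ is pure.

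The argument is short once Lemma \ref{lem:characterization-pure-general} is available; the only points that deserve care are the identification $f^*(N) \cong A$ of right $M$-sets (which is built directly into Theorem \ref{thm:N-M-sets}) and the twin role of the representable $N$ as a connected generator of $\Setswith{N}$, so there is no substantial obstacle. As a consistency check one may instead read the statement off the general (pure, complete spread) factorization \eqref{eq:pure-complete-spread-factorization}: $f$ is pure exactly when its complete spread part $\pi : \PSh(\int^N A \otimes_M 1) \to \PSh(N)$ is an equivalence, and since $\int^N A \otimes_M 1 \to N$ is a discrete opfibration classified by the left $N$-set $A \otimes_M 1 \cong C(A)$, this occurs precisely when $A\otimes_M 1$ is a one-element set, i.e.\ when $A$ has a single connected component as a right $M$-set.
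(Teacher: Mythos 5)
Your proof is correct and rests on exactly the same key result as the paper's, namely Lemma \ref{lem:characterization-pure-general}; the only difference is that you invoke clause $(2)$ (preservation of connected objects, checked on the connected generator $N$ via $f^*(N)\cong A$), whereas the paper invokes clause $(4)$ (that $f_*\simeq \Hom_M(A,-)$ preserves small coproducts if and only if $A$ is connected as a right $M$-set). These are adjoint formulations of the same one-line argument, so the approaches are essentially equivalent.
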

\begin{proof}
Since $\Setswith{M}$ and $\Setswith{N}$ are locally connected, $f$ is pure if and only if $f_*$ preserves small coproducts. We have $f_* \simeq \Hom_M(A,-)$, which preserves small coproducts if and only if $A$ is connected as right $M$-set.
\end{proof}

We can apply this in particular to a geometric morphism $f$ induced by a semigroup morphism $\phi: M \to N$, where $A = Ne$ with $e = \phi(1)$ as usual. We then find that $f$ is pure if and only if $Ne$ is connected as right $M$-set. This gives an alternative route to Corollary \ref{crly:pure-essential-case}.

For the sake of completeness, we also characterize spreads and injections by extending the argument we saw in Proposition \ref{prop:characterization-localic}.

\begin{scholium} \label{schl:characterization-spread}
Let $f : \Setswith{M} \to \Setswith{N}$ be the geometric morphism corresponding to the $\fllr{N}{M}$-set $A$. Then $f$ is a spread if and only if $M$ is a retract of some connected component of $A$, as a right $M$-set. More strongly, $f$ is an injection if and only if $M$ is a retract of $A$.
\end{scholium}
\begin{proof}
We simply replace `subobject' with `complemented subobject' and `object', respectively, in the proof of Proposition \ref{prop:characterization-localic}.
\end{proof}
Note that $M$ is a retract of a connected component of $A$ if and only if this connected component generates $\Setswith{M}$, see \cite[II, Theorem 3.16]{MAC}.

We can further combine Proposition \ref{prop:characterization-pure} and Scholium \ref{schl:characterization-spread} to give a characterization of pure spreads, or equivalently by the comments following Theorem \ref{thm:pure-complete-spread-factorization-general}, pure inclusions.

\begin{crly} \label{crly:characterization-pure-spread}
Let $f : \Setswith{M} \to \Setswith{N}$ be the geometric morphism corresponding to the $\fllr{N}{M}$-set $A$. Then $f$ is a pure spread (or equivalently, a pure inclusion) if and only if $A$ is connected as a right $M$-set and has $M$ as a retract.
\end{crly}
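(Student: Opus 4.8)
The plan is to obtain the statement by simply combining Proposition~\ref{prop:characterization-pure} with Scholium~\ref{schl:characterization-spread}, since by definition a pure spread is a geometric morphism that is at once pure and a spread. By Proposition~\ref{prop:characterization-pure}, $f$ is pure precisely when $A$ is connected as a right $M$-set; by Scholium~\ref{schl:characterization-spread}, $f$ is a spread precisely when $M$ is a retract of some connected component of $A$. The only bookkeeping point is that a connected right $M$-set has exactly one connected component, namely itself, so under the hypothesis that $A$ is connected the second condition reads simply ``$M$ is a retract of $A$''. Hence $f$ is a pure spread if and only if $A$ is connected as a right $M$-set and has $M$ as a retract, which is what is claimed.

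For the parenthetical identification of pure spreads with pure inclusions, I would argue from the general factorization theory recalled after Theorem~\ref{thm:pure-complete-spread-factorization-general} rather than from the algebraic characterizations. Let $f$ be a spread and let $f \simeq w \circ u$ be its (pure, complete spread) factorization, with $u$ pure and $w$ a complete spread. Writing $u \simeq u_2 \circ u_1$ as a pure surjection $u_1$ followed by a pure inclusion $u_2$, the composite $w \circ u_2$ of a pure inclusion with a complete spread is again a spread, so $f \simeq (w \circ u_2) \circ u_1$ is a (pure surjection, spread) factorization of $f$. But $f = \mathrm{id} \circ f$ is also such a factorization, since $f$ is itself a spread; by the uniqueness clause of Theorem~\ref{thm:pure-complete-spread-factorization-general}, $u_1$ is an equivalence, so the pure part $u \simeq u_2$ of the (pure, complete spread) factorization of a spread is always a pure inclusion. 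If in addition $f$ is pure, then the complete spread part $w$ of its (pure, complete spread) factorization is an equivalence, and therefore $f \simeq u$ is a pure inclusion. The converse is immediate: an inclusion is a spread, so a pure inclusion is a pure spread.

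There is essentially no obstacle to overcome: the mathematical content is entirely carried by Proposition~\ref{prop:characterization-pure} and Scholium~\ref{schl:characterization-spread}, and the only points needing care are the trivial remark that a connected object is its own unique connected component, and the general fact (discussed above) that the pure part of the (pure, complete spread) factorization of a spread is a pure inclusion.
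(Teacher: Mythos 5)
Your proof is correct and follows exactly the paper's route: the corollary is obtained by combining Proposition~\ref{prop:characterization-pure} with Scholium~\ref{schl:characterization-spread} (noting that a connected $M$-set is its own unique component), and the identification of pure spreads with pure inclusions is the uniqueness argument sketched in the comments after Theorem~\ref{thm:pure-complete-spread-factorization-general}. The only blemish is a harmless composition-order slip: the alternative (pure surjection, spread) factorization of a spread $f$ should be written $f \circ \mathrm{id}$ (identity as the pure surjection, $f$ as the spread), not $\mathrm{id} \circ f$.
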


\begin{example}
Consider the $\fllr{\Nbb}{\Zbb}$-set $\Zbb$, with the left and right action given by addition. Here $\Zbb$ is connected as a right $\Zbb$-set and there is an epimorphism of right $\Zbb$-sets $\Zbb \to \Zbb$ (the identity map). So the geometric morphism $\PSh(\Zbb) \to \PSh(\Nbb)$ described by the $\fllr{\Nbb}{\Zbb}$-set $\Zbb$ is a pure inclusion. 

More generally, let $\phi: N \to Z$ be a monoid map such that $Z$ is flat as a left $N$-set. Equivalently, $\phi$ is flat as a functor, see \cite[4.7]{benabou}. Then the geometric morphism $\PSh(Z) \to \PSh(N)$ described by the $\fllr{N}{Z}$-set $Z$ is a pure inclusion.
\end{example}

\begin{remark}
Since we have established that the (terminal-connected, \'{e}tale) factorization is dual to the (pure, complete spread) factorization, one might wonder why we did not simply dualize the construction of the latter factorization in this section in order to obtain the former factorization for arbitrary geometric morphisms. The reason is that `reversing' an $\fllr{N}{M}$-set produces a $(M\op,N\op]$-set, and hence a distribution going in the opposite direction. This distribution can be factorized via the dual of the construction above, but the result cannot in general be dualized back to a factorization of the original geometric morphism. Alternatively, one can directly observe that the conditions of Lemma \ref{lem:tccond} and Proposition \ref{prop:characterization-pure} are not dual to one another.
\end{remark}

\section{Comparing \'etale and complete spread geometric morphisms}
\label{sec:compare}

We have seen several examples of geometric morphisms which are both \'{e}tale and complete spreads. In this section, we examine the relationship between these classes of morphism in more detail, first in general and then applied to our case of interest.

\subsection{Locally constant \'etale morphisms}

By definition, objects of a topos $\Ecal$ correspond (up to equivalence of domain toposes) to \'etale geometric morphisms with codomain $\Ecal$. The most basic kind of \'etale maps are the constant \'etale maps. These correspond to the objects $A$ with $A = \bigsqcup_{i \in I} 1$ a disjoint union of copies of the terminal object; these are called the \textbf{constant objects}, and when $\Ecal$ is a Grothendieck topos they can equivalently be expressed as being of the form $p^*(I)$, where $p$ is the global sections morphism of $\Ecal$. The corresponding \'etale geometric morphism is equivalent to the canonical projection $\bigsqcup_{i \in I} \Ecal \to \Ecal$, where $\bigsqcup_{i \in I} \Ecal$ denotes the coproduct of $I$ copies of $\Ecal$, in the category of toposes.

For objects $A$ and $U$ we say that $A$ is \textbf{trivialized} by $U$ if there is a commutative diagram
\begin{equation*}
\begin{tikzcd}
A \times U \ar[rr,"{\psi}","{\cong}"'] \ar[rd] & & \bigsqcup_{i \in I} U \ar[ld] \\
 & U &
\end{tikzcd}
\end{equation*}
with $\psi$ an isomorphism, where the diagonal maps are the evident projections.

\begin{definition}
An object $A$ of a topos $\Ecal$ is said to be \textbf{locally constant} if there is a family of objects $\{U_k\}_{k \in K}$ whose morphisms to the terminal object are jointly epimorphic such that $A$ is trivialized by each of the $U_k$. An \'{e}tale geometric morphism with codomain $\Ecal$ is called \textbf{locally constant \'etale} if it is (up to equivalence of the domain) of the form $\Ecal/A \to \Ecal$ with $A$ locally constant.
\end{definition}

For a topological space $X$, the locally constant \'etale geometric morphisms with codomain $\Sh(X)$ are those of the form $\Sh(Y) \to \Sh(X)$ induced by a covering map $Y \to X$.

We have seen that all \'{e}tale morphisms over presheaf toposes are induced by discrete fibrations, and dually that complete spreads are induced by discrete opfibrations.

\begin{definition}
A functor $F:\Ccal \to \Dcal$ is a \textbf{discrete bifibration} if it is both a discrete fibration and a discrete opfibration.
\end{definition}

Locally constant \'etale geometric morphisms to presheaf toposes are characterized as in the following proposition. The equivalence $(1) \Leftrightarrow (4)$ is due to Bunge and Funk \cite[Corollary 7.9]{bunge-funk-spreads-II}. 

\begin{proposition}
\label{prop:lcpshf}
For a presheaf $A$ on a small category $\Dcal$, the following are equivalent:
\begin{enumerate}
\item $A$ is locally constant as an object of $\PSh(\Dcal)$,
\item Given any morphism $g:D' \to D$ of $\Dcal$, $A(g)$ is an isomorphism,
\item The discrete fibration $\int_{\Dcal} A \to \Dcal$ is a discrete bifibration,
\item The \'{e}tale geometric morphism $\PSh(\Dcal)/A \to \PSh(\Dcal)$ is a complete spread.
\end{enumerate}
\end{proposition}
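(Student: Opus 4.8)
The plan is to prove $(2) \Leftrightarrow (3)$ and $(1) \Leftrightarrow (2)$ directly, and then to invoke the quoted Bunge--Funk equivalence $(1) \Leftrightarrow (4)$ from \cite[Corollary 7.9]{bunge-funk-spreads-II} to close the loop. The equivalence $(2) \Leftrightarrow (3)$ is a matter of unwinding definitions: the projection $p : \int_{\Dcal} A \to \Dcal$ is always a discrete fibration, so it is a discrete bifibration if and only if it is a discrete opfibration, which means that for every $g : D' \to D$ in $\Dcal$ and every object $(D',a')$ over $D'$ there is a unique object $(D,a)$ and morphism $(D',a') \to (D,a)$ over $g$. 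Such a morphism is precisely the datum of an $a \in A(D)$ with $A(g)(a) = a'$, so this existence-and-uniqueness statement, required for all $a'$, is exactly the assertion that $A(g)$ is a bijection, i.e.\ $(2)$.

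For $(1) \Rightarrow (2)$, I would assume $A$ is trivialized by a family $\{U_k\}_{k \in K}$ whose maps to $1$ are jointly epimorphic and fix $g : D' \to D$. Evaluating the covering family at $D$ produces an index $k$ and an element $u \in U_k(D)$; set $u' := U_k(g)(u)$. A trivializing isomorphism $\psi : A \times U_k \xrightarrow{\ \cong\ } \bigsqcup_{i \in I} U_k$ over $U_k$ is stagewise a family of bijections compatible with the projections to $U_k$, so restricting over $u$ and over $u'$ yields bijections $\psi^{(u)} : A(D) \xrightarrow{\cong} I$ and $\psi^{(u')} : A(D') \xrightarrow{\cong} I$. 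Chasing the naturality square of $\psi$ at the arrow $g$ gives $\psi^{(u)} = \psi^{(u')} \circ A(g)$, whence $A(g) = (\psi^{(u')})^{-1} \circ \psi^{(u)}$ is a bijection. (It is used here that the index set $I$ in ``trivialized by $U_k$'' is a single global set, so that the two restricted fibres land in the same $I$ and the naturality square can be read as an equation of maps $A(D) \to A(D')$.)

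For $(2) \Rightarrow (1)$, I would trivialize $A$ over the family of all representables $\{\yon(D) \to 1\}_{D \in \ob\Dcal}$, which is jointly epimorphic since $\id_D$ witnesses that $\yon(D)(D) \neq \varnothing$ for each $D$. For fixed $D$ one checks directly that the map $A \times \yon(D) \to \bigsqcup_{a \in A(D)} \yon(D)$ over $\yon(D)$ whose component at $D'$ sends $(b,\, h : D' \to D)$ to $(A(h)^{-1}(b),\, h)$ is an isomorphism: it is well-defined because $A(h)$ is a bijection by $(2)$, has stagewise inverse $(a,h) \mapsto (A(h)(a), h)$, and is natural in $D'$ by the contravariant functoriality $A(hf) = A(f) \circ A(h)$. (Alternatively, one can use Proposition \ref{prop:slice}: $\PSh(\Dcal)/\yon(D) \simeq \PSh(\Dcal/D)$, and since $\Dcal/D$ has a terminal object and $(2)$ makes all transition maps of the presheaf corresponding to $A \times \yon(D) \to \yon(D)$ invertible, that presheaf is constant.) Combining this with $(1) \Leftrightarrow (4)$ gives the full equivalence. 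No step is a real obstacle; the only routine work is the naturality verification in $(2) \Rightarrow (1)$ and the fibrewise bookkeeping in $(1) \Rightarrow (2)$, and the only substantive external ingredient is $(1) \Leftrightarrow (4)$, which we do not reprove.
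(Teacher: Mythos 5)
Your proof is correct, and for the links $(1) \Leftrightarrow (2)$ and $(2) \Leftrightarrow (3)$ it follows essentially the same route as the paper: the same fibrewise reading of the trivializing isomorphism for $(1) \Rightarrow (2)$ (including the same essential use of the fact that the index set $I$ is a single global set), the same trivialization by the representables $\{\yon(D)\}_{D \in \ob(\Dcal)}$ for $(2) \Rightarrow (1)$ (your formula $(b,h) \mapsto (A(h)^{-1}(b),h)$ is in fact the correctly oriented version of the map the paper writes down), and the same unwinding of the unique-lifting property for $(2) \Leftrightarrow (3)$. The one genuine divergence is how statement $(4)$ is attached. You close the loop by citing Bunge--Funk's equivalence $(1) \Leftrightarrow (4)$ from \cite[Corollary 7.9]{bunge-funk-spreads-II} as a black box, which the paper's own preamble to the proposition explicitly licenses. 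The paper instead proves $(3) \Leftrightarrow (4)$ internally, using Proposition \ref{prop:iniopf}: the (pure, complete spread) factorization of an essential geometric morphism between presheaf toposes is induced by the (initial, discrete opfibration) factorization of the underlying functor, so an \'etale morphism, being induced by a discrete fibration, is a complete spread precisely when that functor is a discrete opfibration up to equivalence of the domain, and one checks that the discrete-fibration hypothesis forces this equivalence to be an isomorphism. The paper's route keeps the argument self-contained relative to its earlier development and makes the duality between \'etale morphisms and complete spreads visible at the level of functors; yours is shorter but outsources the only substantive topos-theoretic content of the $(4)$-link to the external reference. Both are valid proofs of the stated equivalence.
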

\begin{proof}
The equivalence $(1) \Leftrightarrow (2)$ follows from a more general result by Leroy in \cite[Proposition 2.2.1]{leroy}; we give a simplified argument in this special case.

$(1) \Rightarrow (2)$ Given a trivialising family $\{U_k\}_{k \in K}$ for $A$ which is jointly epimorphic over $1$, there must in particular be some $k \in K$ such that $U_k(D) \neq \emptyset$, whence also $U_k(D') \neq \emptyset$. Consider the naturality diagram,
\begin{equation*}
\begin{tikzcd}
A(D) \times U(D) \ar[rr,"{\psi_{D}}","{\cong}"'] \ar[rd] \ar[dd, "A(g) \times U(g)"'] & & \bigsqcup_{i \in I} U(D) \ar[ld] \ar[dd] \\
 & U(D) \ar[dd] & \\
A(D') \times U(D') \ar[rr,"{\psi_{D'}}","{\cong}"', near start, crossing over] \ar[rd] & & \bigsqcup_{i \in I} U(D') \ar[ld] \\
 & U(D'), &
\end{tikzcd}
\end{equation*}
where $I$ is fixed. Let $u \in U(D)$ and $u' := U(g)(u)$. Given $x,y \in A(D)$ with $A(g)(x) = A(g)(y) = z$, say, we write $\psi_D(x,u) = (i,u)$ and $\psi_D(y,u) = (j,u)$ for certain indices $i,j \in I$. We have $\psi_{D'}(z,u') = (i,u')$, and similarly $\psi_{D'}(z,u') = (j,u')$, so $i=j$ and as a result $x = y$. Conversely, given $z \in A(D')$, we write $\psi_{D'}(z,u') = (i,u')$ for some index $i \in I$. Now take $x \in A(D)$ with $\psi_D(x,u) = (i,u)$. It follows that $A(g)(x) = z$. 

$(2) \Rightarrow (1)$ For $A$ satisfying the given condition, let us take $\{\yon(D)\}_{D \in \ob(\Dcal)}$ as our set of trivializing objects. We have an isomorphism $\yon(D) \times A \to \coprod_{a \in A(D)} \yon(D)$ which at an object $D'$ is defined by
\begin{align*}
A(D') \times \Hom(D',D) &\to \coprod_{a \in A(D)} \Hom(D',D) \\
(a,g) & \mapsto (A(g)(a),g),
\end{align*}
which is a bijection since $A(g)$ is.

$(2) \Leftrightarrow (3)$ We already know that $\int_{\Dcal} A \to \Dcal$ is a discrete fibration, so it suffices to check whether it is a discrete opfibration. Given an object $(D,a)$ of $\int_{\Dcal} A$ and a morphism $g:D \to D'$ in $\Dcal$, we know that $A(g)$ is a bijection, so we have a unique lifting of $g$ to the morphism $g: (D,a) \to (D',A(g)^{-1}(a))$, as required. Conversely, if the projection is a discrete opfibration, then each $a' \in A(D')$ has a unique pre-image along $A(g)$ for any $g$, so $A(g)$ is a bijection, as required.

$(3) \Leftrightarrow (4)$ This follows from the fact that a functor $F$ induces a complete spread if and only if it is a discrete opfibration up to equivalence of the domain category, after verifying that $F$ being a discrete fibration forces this equivalence to be an isomorphism.
\end{proof}

This result applies in particular to $\Ecal = \PSh(N)$. We can combine it with the characterizations from the last section to deduce the following.
\begin{corollary}
\label{crly:lconstant-char}
Let $f : \Setswith{M} \to \Setswith{N}$ be an essential geometric morphism induced by a monoid homomorphism $\phi : M \to N$. Then the following are equivalent:
\begin{enumerate}
\item $f$ is locally constant \'etale;
\item $\phi$ is injective, $\phi(M) \subseteq N$ is both left-factorable and right-factorable, and for any $n \in N$ there are elements $u \in N^{\ltimes}$, $v \in N^{\rtimes}$ such that $nu \in \phi(M)$ and $vn \in \phi(M)$.
\end{enumerate}
More generally, if $\phi$ is merely a semigroup homomorphism, then $f$ is locally constant \'etale if and only if the monoid homomorphism part of $\phi$ satisfies the conditions above, and the inclusion $eNe \subseteq N$ induces an equivalence, where $e=\phi(1)$.
\end{corollary}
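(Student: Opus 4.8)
The plan is to reduce the statement to the conjunction of two characterizations already in hand: Theorem~\ref{thm:etale} for \'etale morphisms and Theorem~\ref{thm:complete-spread} for complete spreads. The bridge between them is the observation that, among \'etale geometric morphisms, the locally constant ones are exactly those that are simultaneously complete spreads. I would establish this by invoking the equivalence $(1)\Leftrightarrow(4)$ of Proposition~\ref{prop:lcpshf}: if $f$ is locally constant \'etale then it is \'etale by definition, and writing $f\simeq(\Setswith{N}/A\to\Setswith{N})$ with $A$ locally constant (taking $\Dcal$ to be the one-object category $N$, or equivalently $\check{N}$, so that $\Setswith{N}\simeq\PSh(\Dcal)$), that equivalence shows $f$ is a complete spread; conversely, an \'etale $f\simeq(\Setswith{N}/A\to\Setswith{N})$ that is a complete spread has $A$ locally constant by the same equivalence, hence is locally constant \'etale.

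With this bridge in place, the monoid-homomorphism case is purely formal: $f$ is locally constant \'etale if and only if it is both \'etale and a complete spread, which by Theorems~\ref{thm:etale} and~\ref{thm:complete-spread} is the conjunction of their two lists of conditions. I would then observe that this conjunction is exactly condition~(2): injectivity of $\phi$ is common to both; right-factorability of $\phi(M)$ comes from \'etaleness and left-factorability from the complete spread condition; and for each $n\in N$ the existence of $u\in N^{\ltimes}$ with $nu\in\phi(M)$ comes from \'etaleness while that of $v\in N^{\rtimes}$ with $vn\in\phi(M)$ comes from the complete spread condition. For the general case where $\phi$ is merely a semigroup homomorphism, I would appeal to the ``more generally'' clauses of the same two theorems: each of them adds precisely the single requirement that the inclusion $eNe\subseteq N$ (with $e=\phi(1)$) induce an equivalence of toposes, so their conjunction adds exactly that one condition on top of~(2) applied to the monoid-homomorphism part of $\phi$.

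I do not expect a genuine obstacle, since the substantive work lies in Theorems~\ref{thm:etale} and~\ref{thm:complete-spread} and in Proposition~\ref{prop:lcpshf}. The one point demanding care is the bridging step: I must make sure Proposition~\ref{prop:lcpshf} is applied to the right object, namely the object $A\cong f_!(1)$ of $\Setswith{N}$ for which $\Setswith{M}\simeq\Setswith{N}/A$, and over the right site, namely the codomain $\Setswith{N}$ presented by $N$ (or $\check{N}$) --- not one of the auxiliary categories of elements appearing in the \'etale or complete-spread factorizations. Once this is nailed down, the remainder is a mechanical combination of the cited results.
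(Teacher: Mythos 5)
Your proposal is correct and follows exactly the route the paper takes: the paper states this corollary as a direct combination of Proposition~\ref{prop:lcpshf} (specifically the equivalence of locally constant objects with \'etale-and-complete-spread morphisms, applied over $\Setswith{N}$) with Theorems~\ref{thm:etale} and~\ref{thm:complete-spread}, whose conditions conjoin to give precisely condition~(2) and whose ``more generally'' clauses both add the same $eNe\subseteq N$ requirement. Your care about applying Proposition~\ref{prop:lcpshf} to the object $f_!(1)$ over the codomain site $N$ itself is exactly the right precaution.
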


It follows from this corollary that if $N$ is commutative, then $f$ is \'etale if and only if it is a complete spread, if and only if it is locally constant \'etale. Independently, if $N$ is a group, then any inclusion of a subgroup into $N$ induces a locally constant \'etale morphism.

\subsection{\'Etale geometric morphisms with fixed codomain}
\label{ssec:fixed-codomain-etale}

While the abstract characterizations of subsemigroups inducing \'etale geometric morphisms and complete spreads from the previous sections are useful for recognizing these properties, classifying such morphisms of the form $f : \Setswith{M} \to \Setswith{N}$ for a fixed monoid $N$ can still be challenging. Here we explore a different approach in terms of $N$-sets.

We can directly use Lemma \ref{lem:retracted} to identify objects $X$ of $\PSh(N)$ such that 
\begin{equation*}
\PSh(N)/X \simeq \PSh(M)
\end{equation*}
for some monoid $M$; namely, this happens if there is some object $x \in \int_N X$ of which every object is a retract. Given an element $y \in X$, $y$ is a retract of $x$ as an object of $\int_N X$ if and only if $\exists u \in N^\ltimes$ with $x=yu$. Letting $v$ be the right inverse of $u$, this can be expressed in the following diagram:
\begin{equation*}
\begin{tikzcd}
x \ar[loop left, "vu"] \ar[r,"{u}"'] & y \ar[loop right, "\id"] \ar[l,"{v}"',bend right]
\end{tikzcd}.
\end{equation*}

\begin{definition}
Let $N$ be a monoid and let $X$ be a right $N$-set. An element $x \in X$ will be called a \textbf{strong generator} if for all $y \in X$ there is an element $u \in N^\ltimes$ such that $yu = x$. 
\end{definition}

Note that a strong generator is in particular a generator, so a right $N$-set that admits a strong generator i s cyclic. If $x$ is a strong generator and $u \in N^\ltimes$, then $xu$ is again a strong generator.

\begin{theorem} \label{thm:etale-construction}
Let $N$ be a monoid. Fix a right $N$-set $X$ and a strong generator $x \in X$, and set $N_x = \{ n \in N : xn = x \}$. Then the inclusion $N_x \subseteq N$ induces an \'etale geometric morphism
\begin{equation*}
\begin{tikzcd}
\Setswith{N_x} \ar[r] & \Setswith{N}. 
\end{tikzcd}
\end{equation*}
Conversely, every \'etale geometric morphism $\Setswith{M}\to \Setswith{N}$ is of this form (up to precomposition with an equivalence).  
\end{theorem}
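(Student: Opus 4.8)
The plan is to obtain both directions from Lemma~\ref{lem:retracted} and Proposition~\ref{prop:slice}, together with the retract computation recorded in Subsection~\ref{ssec:fixed-codomain-etale}. Throughout I would write $\Gamma:\int_N X \to N$ for the projection functor, which sends every object to the unique object of $N$ and every morphism to the corresponding element of $N$; note that $\Gamma$ restricted to the full subcategory of $\int_N X$ on a single object $x$ is precisely the monoid inclusion $\{n \in N : xn = x\} = N_x \hookrightarrow N$, and recall from the proof of Proposition~\ref{prop:finfib} that, under the equivalence $\PSh(N)/X \simeq \PSh(\int_N X)$ of Proposition~\ref{prop:slice}, the canonical \'etale geometric morphism $\PSh(N)/X \to \PSh(N)$ corresponds to the essential geometric morphism induced by $\Gamma$.

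For the first assertion, suppose $x \in X$ is a strong generator. By the retract computation in Subsection~\ref{ssec:fixed-codomain-etale}, this is exactly the statement that every object of $\int_N X$ is a retract of $x$, so Lemma~\ref{lem:retracted} gives an equivalence $\PSh(N_x) \simeq \PSh(\int_N X)$, induced by the inclusion of the full subcategory on $x$. I would then compose this with $\PSh(\int_N X) \simeq \PSh(N)/X$ and with the canonical \'etale projection: the result is a geometric morphism $\PSh(N_x) \to \PSh(N)$ which is \'etale (an equivalence followed by an \'etale morphism), and by the identifications above it is the geometric morphism induced by $N_x \subseteq N$.

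For the converse, let $f:\PSh(M)\to\PSh(N)$ be \'etale. By definition there is a right $N$-set $X$ and an equivalence $\PSh(M)\simeq\PSh(N)/X$ through which $f$ factors as that equivalence followed by the canonical projection; using Proposition~\ref{prop:slice} I would rewrite this as $\PSh(M)\simeq\PSh(\int_N X)$, with the projection now induced by $\Gamma$. Since $\PSh(\int_N X)\simeq\PSh(M)$, the converse half of Lemma~\ref{lem:retracted} supplies an object $x$ of $\int_N X$ — that is, an element $x\in X$ — of which every object is a retract, whence $\PSh(\int_N X)\simeq\PSh(\End_{\int_N X}(x))=\PSh(N_x)$. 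The retract computation then identifies $x$ as a strong generator of $X$, and the equivalence $\PSh(M)\simeq\PSh(N_x)$ so obtained is the one induced by the full inclusion on $x$; arguing as before, $f$ is carried by this equivalence to the morphism induced by $N_x\subseteq N$. Hence $f$ is of the asserted form up to precomposition with an equivalence.

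The step I expect to require the most care is the tracking of compatibilities with the structure maps to $\PSh(N)$ along the chain $\PSh(M)\simeq\PSh(N)/X\simeq\PSh(\int_N X)\simeq\PSh(N_x)$, so that we recover the inclusion-induced morphism (and, in the converse, the \'etale morphism ``corresponding to $X$'') and not merely an equivalence between the domains. Everything else — that every object of $\int_N X$ being a retract of $x$ is equivalent to $x$ being a strong generator, that $\Gamma$ restricts to the inclusion $N_x\subseteq N$, and that $\End_{\int_N X}(x)=N_x$ — is a routine unwinding of definitions, and the compatibility itself is just functoriality of $\Ccal\mapsto\PSh(\Ccal)$ combined with the identification of $\PSh(N)/X\to\PSh(N)$ with the $\Gamma$-induced morphism.
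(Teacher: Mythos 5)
Your proposal is correct and follows essentially the same route as the paper's proof: identify the \'etale morphisms with codomain $\Setswith{N}$ as those induced by the projections $\PSh(\int_N X) \to \Setswith{N}$ via Proposition \ref{prop:slice}, and then apply Lemma \ref{lem:retracted} together with the observation that $y$ is a retract of $x$ in $\int_N X$ exactly when $x = yu$ for some $u \in N^\ltimes$, so that $\PSh(\int_N X) \simeq \Setswith{M}$ precisely when $X$ has a strong generator $x$, with $M = N_x = \End_{\int_N X}(x)$. Your version merely spells out the compatibility of the equivalences with the structure maps to $\Setswith{N}$, which the paper leaves implicit.
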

\begin{proof}
The \'etale geometric morphisms with codomain $\Setswith{N}$ are precisely the geometric morphisms $\PSh(\int_N X) \to \Setswith{N}$ for some right $N$-set $X$. Further, from the above we see that $\PSh(\int_N X) \simeq \Setswith{M}$ for some monoid $M$ if and only if $X$ has a strong generator $x$, and in this case we can take $M$ to be $N_x$, which is the endomorphism monoid of $x$ in $\int_N X$. 
\end{proof}

Alternatively, one direction of the statement can be deduced from Theorem \ref{thm:etale}.

\begin{remark} \label{rem:induced-by-monoid-map}
If $f: \Setswith{M} \to \Setswith{N}$ is an \'etale geometric morphism, we already saw $f$ is surjective. This means that if $f$ is induced by a semigroup homomorphism $\phi : M \to N$ such that for $e=\phi(1)$ the inclusion $eNe \subseteq N$ induces an equivalence. So after replacing $N$ with the Morita equivalent monoid $eNe$, we can assume that $\phi$ is a monoid map.

Using Theorem \ref{thm:etale-construction} we see that if the semigroup map $\phi : M \to N$ induces an \'etale geometric morphism $f$, then we can also replace $M$ by a Morita equivalent monoid $M'$ such that there is a monoid morphism $\phi' : M' \to N$ inducing $f$. 

This does not work for arbitrary surjective geometric morphisms. For example, take $N$ a monoid with a nontrivial idempotent $e \in N$, and consider the semigroup map $\phi : 1 \to N$ with $\phi(1)=e$. This induces a geometric morphism $f: \Set \to \Setswith{N}$ which is surjective whenever the inclusion $eNe \subseteq N$ induces a Morita equivalence. However, there are no monoids Morita equivalent to $1$ (other than $1$ itself), so it is impossible for $f$ to be induced by a monoid map $M' \to N$ for some monoid $M'$. 
\end{remark}

\begin{example}
If we have a monoid $N$, a right $N$-set $X$, and strong generators $x,x' \in X$, then $N_x$ and $N_{x'}$ are Morita equivalent, since both $\Setswith{N_x}$ and $\Setswith{N_{x'}}$ are equivalent to $\PSh(\int_N X)$. We now show with an example that $N_x$ and $N_{x'}$ are not necessarily isomorphic.

Let $N = \langle{u,v,t: uv=1, t^2 = 1}\rangle$ and $X = \{a,b\}$. Consider the right $N$-action on $X$ defined on generators as $a \cdot u = a \cdot v = a \cdot t = b \cdot t = b$ and $b \cdot u = b\cdot v = a$.
\begin{equation*}
\begin{tikzcd}
a \ar[r,bend left,"{u,v,t}"] & b \ar[l,bend left,"{u,v}"] \ar[loop,"{t}", out=30, in=330,looseness=5]
\end{tikzcd}
\end{equation*}
Clearly, both $a$ and $b$ are strong generators, so the endomorphism monoids $\End(a)$ and $\End(b)$ in $\int_N X$ are Morita equivalent, since both present the topos $\PSh(N)/X$.

We shall show that $\End(a) \not\cong \End(b)$ by examining the idempotents of the involved monoids. In $N$, any idempotent can be reduced to a form in which $t$ occurs at most once, and from there that any idempotent can be written as either $v^ku^k$ or $v^ktu^k$ for some $k \in \Nbb$. The idempotents lying in $\End(a)$ are all those of the form $v^ku^k$, plus those of the form $v^{2i+1}tu^{2i+1}$ for some $i \in \Nbb$. On the other hand, $\End(b)$ contains the idempotent of the form $v^ku^k$ and $v^{2j}tu^{2j}$ for $j \in \Nbb$. In $\End(a)$ there is a idempotent $e\neq 1$ such that $ef=f$ for any other idempotent $f\neq 1$, namely $e=vu$. In $\End(b)$ such an idempotent does not exist, simply because $t \in \End(b)$. Indeed, from $et=t$ and $e \neq 1$ it would follow that $e=t$, but this idempotent does not qualify because $tvu \neq vu$ (indeed, $tvu$ is not even idempotent).
\end{example}

\subsection{Complete spreads with fixed codomain}

We can dualize the results from the previous subsection to complete spreads.

\begin{definition}
Let $N$ be a monoid and let $Y$ be a left $N$-set. An element $y \in Y$ will be called a \textbf{strong generator} if for all $x \in Y$ there is an element $v \in N^\rtimes$ such that $vx = y$. 
\end{definition}

\begin{theorem} \label{thm:complete-spread-construction}
Let $N$ be a monoid. Fix a left $N$-set $Y$ and a strong generator $y \in Y$, and set $N^y = \{ n \in N : ny = y \}$. Then the inclusion $N^y \subseteq N$ induces a complete spread
\begin{equation*}
\begin{tikzcd}
\Setswith{N^y} \ar[r] & \Setswith{N}. 
\end{tikzcd}
\end{equation*}
Conversely, every complete spread $\Setswith{M}\to \Setswith{N}$ is of this form (up to precomposition with an equivalence).  
\end{theorem}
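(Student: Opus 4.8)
The plan is to dualize the proof of Theorem \ref{thm:etale-construction}, replacing the category of elements $\int_N X$ of a right $N$-set by the dual construction $\int^N Y$ for a left $N$-set $Y$, and replacing discrete fibrations by discrete opfibrations. The structural inputs I would use are: complete spreads with codomain $\PSh(N)$ are essential (Proposition \ref{prop:complete-spreads-are-essential}); a functor induces a complete spread if and only if it is a discrete opfibration up to equivalence of its domain (as used in the proof of Proposition \ref{prop:lcpshf}); and a discrete opfibration into the one-object category $N$ is, up to equivalence, a projection $\int^N Y \to N$ for some left $N$-set $Y$. The concrete handle on this is the (pure, complete spread) factorization \eqref{eq:pure-complete-spread-factorization} of a geometric morphism $f:\PSh(M) \to \PSh(N)$ induced by a $\fllr{N}{M}$-set $A$, whose complete spread part is the essential morphism induced by $\int^N (A \otimes_M 1) \to N$.

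For the converse direction (that every complete spread has the stated form), I would take $f : \PSh(M) \to \PSh(N)$ a complete spread, induced by a $\fllr{N}{M}$-set $A$. Since $f$ is already a complete spread, uniqueness of the (pure, complete spread) factorization (Theorem \ref{thm:pure-complete-spread-factorization-general}) forces the pure part $\eta$ in \eqref{eq:pure-complete-spread-factorization} to be an equivalence; hence $\PSh(M) \simeq \PSh(\int^N Y)$ with $Y := A \otimes_M 1$ a left $N$-set, and $f$ agrees, up to precomposition with an equivalence, with the morphism induced by the discrete opfibration $\int^N Y \to N$. Applying Lemma \ref{lem:retracted} to the equivalence $\PSh(\int^N Y) \simeq \PSh(M)$ yields an object $y$ of $\int^N Y$ of which every object is a retract. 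Unwinding retracts in $\int^N Y$ — a retract datum for $x$ over $y$ consists of $v, u \in N$ with $vx = y$, $uy = x$ and $uv = 1$, the middle equation being automatic from the other two — shows this condition is exactly that for every $x \in Y$ there is $v \in N^\rtimes$ with $vx = y$, i.e.\ that $y$ is a strong generator of $Y$. Since $N^y = \End_{\int^N Y}(y)$ is the monoid of endomorphisms of $y$, the inclusion of the full subcategory on $y$ induces an equivalence $\PSh(N^y) \simeq \PSh(\int^N Y)$, and composing the projection $\int^N Y \to N$ with this inclusion is literally the monoid inclusion $N^y \subseteq N$; this exhibits $f$ in the desired form.

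For the forward direction, given a left $N$-set $Y$ and a strong generator $y \in Y$, I would factor the monoid inclusion $N^y \hookrightarrow N$ as $N^y \hookrightarrow \int^N Y \to N$, where the first map is the inclusion of the full subcategory on $y$ (legitimate since $N^y = \End_{\int^N Y}(y)$) and the second is the projection discrete opfibration. The strong-generator hypothesis, via the same retract computation, makes every object of $\int^N Y$ a retract of $y$, so Lemma \ref{lem:retracted} gives an equivalence $\PSh(N^y) \simeq \PSh(\int^N Y)$; the projection $\int^N Y \to N$, being a discrete opfibration, induces a complete spread $\PSh(\int^N Y) \to \PSh(N)$. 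Composing, the inclusion $N^y \subseteq N$ induces a complete spread, as claimed.

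The routine-but-error-prone step, and the one I would treat with the most care, is matching the handedness conventions: verifying that ``strong generator of a left $N$-set'' (defined via $N^\rtimes$, the left-invertible elements) is precisely the condition extracted from ``$x$ is a retract of $y$ in $\int^N Y$'', given the paper's convention $\int^N Y = \left(\int_{N\op} Y\right)\op$ and the resulting direction in which morphisms of $\int^N Y$ act. Everything else is a formal dualization of Theorem \ref{thm:etale-construction} together with the already-established properties of the (pure, complete spread) factorization.
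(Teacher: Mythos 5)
Your proposal is correct and is essentially the paper's own argument: the paper proves this theorem by dualizing Theorem \ref{thm:etale-construction}, replacing $\int_N X$ by $\int^N Y$, discrete fibrations by discrete opfibrations, and using that complete spreads between these toposes are essential, exactly as you do. Your careful unwinding of the retract condition in $\int^N Y$ (yielding $v \in N^{\rtimes}$ with $vx = y$, i.e.\ the strong generator condition) and the identification $N^y = \End_{\int^N Y}(y)$ match the intended dualization precisely.
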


\begin{remark}
The dual of Remark \ref{rem:induced-by-monoid-map} holds here. If $f : \Setswith{M} \to \Setswith{N}$ is a complete spread, then we can assume that it is induced by a monoid map $M \to N$, either by replacing $N$ with a Morita equivalent monoid (using that $f$ is surjective), or by replacing $M$ with a Morita equivalent monoid (using Theorem \ref{thm:complete-spread-construction}).
\end{remark}

Observe that while we were able to characterize locally constant \'etale geometric morphisms in terms of subsemigroups in Corollary \ref{crly:lconstant-char}, we cannot combine Theorems \ref{thm:etale-construction} and \ref{thm:complete-spread-construction} so easily since they refer to fundamentally different objects (right and left $N$-sets, respectively). We can instead employ the characterization from Proposition \ref{prop:lcpshf} to deduce the following.

\begin{corollary}
\label{crly:lc-generators}
Let $X$ be a right $N$-set with a strong generator $x$. Then the induced \'etale geometric morphism $\PSh(N_x) \to \PSh(N)$ is locally constant \'etale if and only if $N$ acts on $X$ by automorphisms. Dually, if $Y$ is a left $N$-set with a strong generator $y$, then the induced complete spread $\PSh(N^y) \to \PSh(N)$ is locally constant \'etale if and only if $N$ acts on $Y$ by automorphisms.
\end{corollary}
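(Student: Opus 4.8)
The plan is to deduce both halves directly from Proposition~\ref{prop:lcpshf}, after recalling via Theorems~\ref{thm:etale-construction} and~\ref{thm:complete-spread-construction} which object, respectively co-object, of $\PSh(N)$ represents the morphism in question. For the first assertion: by Theorem~\ref{thm:etale-construction} (together with Proposition~\ref{prop:slice}), the \'etale geometric morphism $\PSh(N_x) \to \PSh(N)$ is, up to equivalence of its domain, the canonical projection $\PSh(N)/X \to \PSh(N)$ determined by the right $N$-set $X$ regarded as an object of $\PSh(N)$; by definition it is locally constant \'etale exactly when $X$ is a locally constant object of $\PSh(N)$. Now invoke the equivalence $(1) \Leftrightarrow (2)$ of Proposition~\ref{prop:lcpshf} with $\Dcal = N$: the morphisms of the one-object category $N$ are its elements, and the presheaf $X$ sends $n$ to the action map $(-\cdot n)\colon X \to X$, so $X$ is locally constant if and only if every such map is a bijection, that is, if and only if $N$ acts on $X$ by automorphisms. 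No computation is involved beyond unwinding the definitions.

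For the dual assertion I would first record that a complete spread onto $\PSh(N)$ is locally constant \'etale if and only if it is \'etale: if it is \'etale it has the form $\PSh(N)/A \to \PSh(N)$ for some $A$, and then $A$ is automatically locally constant by the implication $(4) \Rightarrow (1)$ of Proposition~\ref{prop:lcpshf}, while the reverse implication is trivial. By Theorem~\ref{thm:complete-spread-construction} and Lemma~\ref{lem:retracted} (using that $y$ is a strong generator, so that $\PSh(N^y) \simeq \PSh(\int^N Y)$ with $N^y$ identified with the endomorphism monoid of $y$), the complete spread $\PSh(N^y) \to \PSh(N)$ is induced by the discrete opfibration $\int^N Y \to N$; by the dual of the argument for $(3)\Leftrightarrow(4)$ in Proposition~\ref{prop:lcpshf}, such a morphism is \'etale precisely when $\int^N Y \to N$ is moreover a discrete fibration, i.e.\ a discrete bifibration. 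Finally, writing $\int^N Y = \bigl(\int_{N\op} Y\bigr)\op$ and using that a functor is a discrete fibration if and only if its opposite is a discrete opfibration, this happens if and only if $\int_{N\op} Y \to N\op$ is a discrete bifibration, which by the equivalence $(2)\Leftrightarrow(3)$ of Proposition~\ref{prop:lcpshf} applied to the presheaf $Y$ on $N\op$ means precisely that every structure map $n\cdot{-}\colon Y \to Y$ is a bijection, i.e.\ that $N$ acts on $Y$ by automorphisms.

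The one place demanding care — and the main, though essentially bookkeeping, obstacle — is tracking handedness through the chain: complete spread $\rightsquigarrow$ discrete opfibration $\int^N Y \to N$ $\rightsquigarrow$ its opposite $\int_{N\op} Y \to N\op$ $\rightsquigarrow$ presheaf $Y$ on $N\op$, so that the final condition lands on the left $N$-action on $Y$ rather than on some dual. A reader who prefers to avoid dualizing Proposition~\ref{prop:lcpshf} can instead combine Theorem~\ref{thm:complete-spread} with Corollary~\ref{crly:lconstant-char}: since $\PSh(N^y)\to\PSh(N)$ is already known to be a complete spread, the inclusion $N^y \subseteq N$ is injective with left-factorable image satisfying the requisite right-cancellation condition, and one checks that the remaining conditions in Corollary~\ref{crly:lconstant-char} (right-factorability of $N^y$ and, for each $n \in N$, existence of $u \in N^{\ltimes}$ with $nu \in N^y$) are together equivalent to $N$ acting on $Y$ by automorphisms; but the route through Proposition~\ref{prop:lcpshf} is shorter.
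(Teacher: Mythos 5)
Your proposal is correct and takes essentially the route the paper intends: Corollary~\ref{crly:lc-generators} is presented there as a direct consequence of Proposition~\ref{prop:lcpshf} (together with Theorems~\ref{thm:etale-construction} and~\ref{thm:complete-spread-construction}), and your argument --- identifying the \'etale morphism with the slice over $X$ and applying $(1)\Leftrightarrow(2)$, then dualizing through $\int^N Y \cong \bigl(\int_{N\op} Y\bigr)\op$ for the complete spread --- is exactly that deduction with the handedness bookkeeping made explicit. No gaps.
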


\subsection{A matrix monoid example}
\label{ssec:matrix}

In this subsection, we give an example of a monoid homomorphism such that the induced geometric morphism is both terminal-connected and a complete spread. After dualizing, this additionally gives an example where the induced geometric morphism is both pure and \'etale.

The example is inspired by some of the literature on the Arithmetic Site of Connes and Consani \cite{connes-consani}, \cite{connes-consani-complex-lift}, \cite{arithmtop}, \cite{llb-three}.

For a prime number $p$, consider the monoid
\begin{equation*}
Q_p = \left\{ \begin{pmatrix}
p^n & 0 \\
k & 1
\end{pmatrix} : n \in \Nbb,~ k \in \Zbb  \right\}
\end{equation*}
under matrix multiplication, and the submonoid
\begin{equation*}
F_p = \left\{ \begin{pmatrix}
p^n & 0 \\
k & 1
\end{pmatrix} : n,k \in \Nbb,~ 0 \leq k < p^n  \right\} ~\subseteq~ Q_p.
\end{equation*}

Here we think of $\PSh(F_p)$ as corresponding to the prime $p$ part of Conway's site as introduced in \cite{llb-three}. Further, $Q_p$ is the prime $p$ part of (the opposite of) the $(ax+b)$-monoid, which is related to the study of parabolic $\mathbb{Q}$-lattices, see \cite{connes-consani-complex-lift}. The topos $\PSh(Q_p)$ is the prime $p$ part of the topos associated to the $(ax+b)$-monoid, as studied in \cite[\S 2.5]{arithmtop}.

\begin{proposition}
The monoid $F_p$ is free, with as generators the matrices
\begin{equation*}
\begin{pmatrix}
p & 0 \\
0 & 1
\end{pmatrix},~
\begin{pmatrix}
p & 0 \\
1 & 1
\end{pmatrix},~
\dots
\begin{pmatrix}
p & 0 \\
p-1 & 1
\end{pmatrix}.
\end{equation*} 
\end{proposition}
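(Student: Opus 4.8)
The plan is to compute the multiplication of $F_p$ explicitly and recognise it as $p$-ary expansion. The first step is to record that, already in $Q_p$,
\[
\begin{pmatrix} p^n & 0 \\ k & 1 \end{pmatrix}\begin{pmatrix} p^m & 0 \\ l & 1 \end{pmatrix} = \begin{pmatrix} p^{n+m} & 0 \\ kp^m + l & 1 \end{pmatrix},
\]
and to check that the constraints $0 \le k < p^n$ and $0 \le l < p^m$ force $0 \le kp^m + l < p^{n+m}$, so that $F_p$ really is a submonoid. Writing $g_k = \begin{pmatrix} p & 0 \\ k & 1\end{pmatrix}$ for $0 \le k \le p - 1$, an immediate induction on the word length $n$ then yields
\[
g_{k_1} g_{k_2} \cdots g_{k_n} = \begin{pmatrix} p^n & 0 \\ \sum_{i=1}^{n} k_i p^{n-i} & 1 \end{pmatrix},
\]
the empty product ($n = 0$) being the identity matrix $\begin{pmatrix} 1 & 0 \\ 0 & 1\end{pmatrix}$.

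Next I would set up a two-sided inverse to the canonical monoid homomorphism $\Phi$ from the free monoid $W = \{0, 1, \dots, p-1\}^{*}$ (words under concatenation) to $F_p$ sending $k_1 \cdots k_n \mapsto g_{k_1}\cdots g_{k_n}$. Define $\Psi : F_p \to W$ by sending $\begin{pmatrix} p^n & 0 \\ c & 1\end{pmatrix}$ to the length-$n$ string of base-$p$ digits of $c$, padded with leading zeros (and the empty string when $n = 0$). That $\Psi$ is well defined and, for each fixed $n$, a bijection between the level-$n$ elements of $F_p$ and the words of length $n$ is precisely the existence and uniqueness of base-$p$ representations with a prescribed number of digits. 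That $\Psi$ is a homomorphism follows from the multiplication formula: in $cp^m + d$ with $0 \le d < p^m$ and $0 \le c < p^n$, the low $m$ digits are those of $d$ and the high $n$ digits those of $c$, so the product goes to the concatenation. Finally $\Psi \circ \Phi = \id$ and $\Phi \circ \Psi = \id$ are read off the displayed formula for $g_{k_1}\cdots g_{k_n}$, so $\Phi$ is an isomorphism and $F_p$ is free on $g_0, \dots, g_{p-1}$.

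I do not expect a genuine obstacle; the one point requiring care is the bookkeeping with leading zeros. One must use $p$-ary representations of a \emph{fixed} length $n$, not the usual normal form without leading zeros, since otherwise the bijection with words (hence freeness, which forbids relations such as $g_0 = 1$) fails. It is also worth stating explicitly that the $(1,1)$-entry $p^n$ records the word length, which is what allows words of different lengths to be distinguished. If one prefers to avoid introducing $\Psi$, the same content can be phrased directly: surjectivity of $\Phi$ is the existence of base-$p$ expansions, while injectivity follows because an equality $g_{k_1}\cdots g_{k_n} = g_{l_1}\cdots g_{l_m}$ forces $p^n = p^m$, hence $n = m$, and then $\sum_i k_i p^{n-i} = \sum_i l_i p^{n-i}$ forces $k_i = l_i$ for all $i$ by uniqueness of digits.
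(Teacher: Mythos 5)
Your proposal is correct and follows essentially the same route as the paper: compute the product of $n$ generators to see that the lower-left entry is exactly the base-$p$ expansion $\sum_i k_i p^{n-i}$ with $0\le k_i<p$, and then invoke existence and uniqueness of $p$-adic expansions of a fixed length $n$ (the length being recorded by the $(1,1)$-entry $p^n$) to conclude freeness. The extra bookkeeping you supply --- checking that $F_p$ is closed under multiplication and writing down the explicit inverse $\Psi$ --- is just a more detailed version of what the paper leaves implicit.
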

\begin{proof}
For $n \in \Nbb$, take natural numbers $a_0,\dots,a_{n-1}$ with $0 \leq a_i < p$ for each $i \in \{0,\dots,n-1\}$. We then calculate
\begin{equation*}
\begin{pmatrix}
p & 0 \\
a_{n-1} & 1
\end{pmatrix}
\begin{pmatrix}
p & 0 \\
a_{n-2} & 1
\end{pmatrix} \dots 
\begin{pmatrix}
p & 0 \\
a_1 & 1
\end{pmatrix}
\begin{pmatrix}
p & 0 \\
a_0 & 1
\end{pmatrix} =
\begin{pmatrix}
p^n & 0 \\
\sum_{i=0}^{n-1} a_i p^i & 1 
\end{pmatrix}.
\end{equation*}
The submonoid generated by the given matrices is then free, by the uniqueness of $p$-adic expansions. 
\end{proof}

\begin{proposition}
Let $f : \PSh(F_p) \to \PSh(Q_p)$ be the geometric morphism induced by the inclusion $F_p \subseteq Q_p$. Then $f$ is terminal-connected and a complete spread.
\end{proposition}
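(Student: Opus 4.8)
The plan is to deduce both properties from the characterizations already obtained for essential geometric morphisms induced by monoid homomorphisms. Since $F_p \subseteq Q_p$ is an inclusion of monoids sharing the identity matrix, $f$ is essential and induced by an injective monoid homomorphism, so Corollary \ref{cor:tc} and Theorem \ref{thm:complete-spread} both apply. Throughout I will use the product rule
\begin{equation*}
\begin{pmatrix} p^n & 0 \\ k & 1 \end{pmatrix}\begin{pmatrix} p^m & 0 \\ l & 1 \end{pmatrix} = \begin{pmatrix} p^{n+m} & 0 \\ kp^m + l & 1 \end{pmatrix},
\end{equation*}
together with the observation that the left- and right-invertible elements of $Q_p$ coincide and are precisely the matrices $\begin{pmatrix} 1 & 0 \\ c & 1 \end{pmatrix}$ with $c \in \Zbb$ (the inverse of such a matrix being $\begin{pmatrix} 1 & 0 \\ -c & 1 \end{pmatrix}$).

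For terminal-connectedness, by Corollary \ref{cor:tc} it suffices to show $\langle{F_p}\rangle\rangle_{Q_p} = Q_p$. Fix $g = \begin{pmatrix} p^a & 0 \\ b & 1 \end{pmatrix} \in Q_p$. I will exhibit $h \in F_p$ with $hg \in F_p$; since $\langle{F_p}\rangle\rangle_{Q_p}$ is a right-factorable submonoid containing $h$ and $hg$, this forces $g \in \langle{F_p}\rangle\rangle_{Q_p}$. If $b \ge 0$, take $k = 0$ and $m$ large enough that $p^{m+a} > b$; then $h = \begin{pmatrix} p^m & 0 \\ 0 & 1 \end{pmatrix} \in F_p$ and $hg = \begin{pmatrix} p^{m+a} & 0 \\ b & 1 \end{pmatrix} \in F_p$. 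If $b < 0$, take $k = \lceil -b/p^a \rceil \ge 1$, so that $0 \le kp^a + b < p^a$, and take $m$ large enough that $p^m > k$; then $h = \begin{pmatrix} p^m & 0 \\ k & 1 \end{pmatrix} \in F_p$ and $hg = \begin{pmatrix} p^{m+a} & 0 \\ kp^a + b & 1 \end{pmatrix} \in F_p$.

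For the complete spread property, I will verify the three conditions of Theorem \ref{thm:complete-spread} for the inclusion $\phi: F_p \hookrightarrow Q_p$. Injectivity is clear. For left-factorability of $F_p$ in $Q_p$: suppose $\begin{pmatrix} p^n & 0 \\ k & 1 \end{pmatrix} \in F_p$, i.e.\ $0 \le k < p^n$, and $\begin{pmatrix} p^a & 0 \\ b & 1 \end{pmatrix}\begin{pmatrix} p^n & 0 \\ k & 1 \end{pmatrix} = \begin{pmatrix} p^{a+n} & 0 \\ bp^n + k & 1 \end{pmatrix} \in F_p$; then $0 \le bp^n + k$ with $k < p^n$ gives $b \ge 0$, while $bp^n + k < p^{a+n}$ with $k \ge 0$ gives $b < p^a$, so $\begin{pmatrix} p^a & 0 \\ b & 1 \end{pmatrix} \in F_p$. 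Finally, given $n = \begin{pmatrix} p^a & 0 \\ b & 1 \end{pmatrix} \in Q_p$, let $c \in \Zbb$ be the unique integer with $cp^a + b \in [0, p^a)$; then $v = \begin{pmatrix} 1 & 0 \\ c & 1 \end{pmatrix} \in Q_p^{\rtimes}$ and $vn = \begin{pmatrix} p^a & 0 \\ cp^a + b & 1 \end{pmatrix} \in F_p$. Thus all three conditions hold and $f$ is a complete spread.

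I expect no substantive obstacle, since every step is elementary arithmetic in $Q_p$; the point requiring the most care is the right-factorable generation argument, where the auxiliary exponent $m$ and lower-left entry $k$ must be chosen simultaneously so that both $h$ and $hg$ have entries in the correct ranges, which is why it is cleanest to split into the cases $b \ge 0$ and $b < 0$.
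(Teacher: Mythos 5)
Your proposal is correct and follows essentially the same route as the paper: both parts are reduced to Corollary \ref{cor:tc} and Theorem \ref{thm:complete-spread} and then verified by elementary matrix arithmetic, with only cosmetic differences (the paper's witness for right-factorable generation uses a lower-left entry of the form $p^r$ where you use the minimal non-negative residue, and you split into the cases $b \ge 0$ and $b < 0$ where the paper handles both at once). You also supply the small verification of left-factorability that the paper leaves to the reader, and it checks out.
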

\begin{proof}
We first prove that $f$ is terminal-connected. By Corollary \ref{cor:tc}, $f$ is terminal-connected if and only if $\langle{F_p}\rangle\rangle_{Q_p} = Q_p$. Take arbitrary $n$ and $k$, with $n \in \Nbb$ and $k \in \Zbb$. Choose a natural number $r$ such that $p^{n+r} + k \geq 0$ and then choose $m>r$ large enough such that $p^{n+r} + k < p^{m+n}$. We compute
\begin{equation*}
\begin{pmatrix}
p^m & 0 \\
p^r & 1 
\end{pmatrix} \begin{pmatrix}
p^n & 0 \\
k & 1
\end{pmatrix} = \begin{pmatrix}
p^{m+n} & 0 \\
p^{n+r} + k & 1
\end{pmatrix}.
\end{equation*}
The matrices $\begin{pmatrix}
p^m & 0 \\
p^r & 1
\end{pmatrix}$ and $\begin{pmatrix}
p^{m+n} & 0 \\
p^{n+r} + k & 1
\end{pmatrix}$ are both contained in $F_p$. It follows that $\begin{pmatrix}
p^n & 0 \\
k & 1
\end{pmatrix}$ is contained in the right-factorable closure $\langle{F_p}\rangle\rangle_{Q_p}$. Because $n$ and $k$ were arbitrary, we conclude that $\langle{F_p}\rangle\rangle_{Q_p} = Q_p$. So $f$ is terminal-connected.

We now prove that $f$ is a complete spread. By Theorem \ref{thm:complete-spread}, it is enough to show that $F_p \subseteq Q_p$ is left-factorable, and that for any $x \in Q_p$ there is some $v \in Q_p^\rtimes$ such that $vx \in F_p$. To show that $F_p \subseteq Q_p$ is left-factorable, we compute
\begin{equation*}
\begin{pmatrix}
p^n & 0 \\
k & 1 
\end{pmatrix}
\begin{pmatrix}
p^m & 0 \\
l & 1
\end{pmatrix} = 
\begin{pmatrix}
p^{n+m} & 0 \\
kp^m + l & 1 
\end{pmatrix}.
\end{equation*}
We now have to show that if $0 \leq kp^m + l < p^{n+m}$ and $0 \leq l < p^m$, then $0 \leq k < p^n$. We leave it to the reader to verify this. Now take
\begin{equation*}
x = \begin{pmatrix}
p^n & 0 \\
k & 1
\end{pmatrix} \in Q_p.
\end{equation*}
We have to find $v \in Q_p^\rtimes$ such that $vx \in F_p$. Note that
\begin{equation*}
Q_p^\rtimes = Q_p^\times = \left\{ \begin{pmatrix}
1 & 0 \\
z & 1
\end{pmatrix} : z \in \Zbb \right\}
\end{equation*}
and
\begin{equation*}
\begin{pmatrix}
1 & 0 \\
z & 0
\end{pmatrix}
\begin{pmatrix}
p^n & 0 \\
k & 1 
\end{pmatrix} =
\begin{pmatrix}
p^n & 0 \\
zp^n + k & 1
\end{pmatrix}.
\end{equation*}
So we need to find an integer $z$ such that $0 \leq zp^n + k < p^n$. There is a unique such $z$, namely the smallest $z$ with $0 \leq zp^n + k$. 
\end{proof}

Because terminal-connected is dual to pure, and \'etale is dual to being a complete spread, we can dualize to get an example of a pure and \'etale geometric morphism.

\begin{corollary}
Let $g : \PSh(F_p\op) \to \PSh(Q_p\op)$ be the geometric morphism induced by the inclusion $F_p\op \subseteq Q_p\op$. Then $f$ is pure and \'etale. 
\end{corollary}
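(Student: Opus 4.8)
\emph{Proof proposal.} The plan is to obtain this corollary purely by dualizing the previous proposition, using the fact recorded around \eqref{eq:factorization-parts} that, for essential geometric morphisms induced by semigroup homomorphisms, the (terminal-connected, \'etale) and (pure, complete spread) factorizations are interchanged by passing from $\phi$ to $\phi\op$. Write $\phi : F_p \hookrightarrow Q_p$ for the inclusion; it is a monoid homomorphism (the identity matrix lies in $F_p$), so $\phi\op : F_p\op \hookrightarrow Q_p\op$ is again a monoid homomorphism, and $g$ is the essential geometric morphism it induces. The previous proposition says that $f$, the morphism induced by $\phi$, is both terminal-connected and a complete spread; the goal is to transfer each property across the op.

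First I would deduce that $g$ is pure. By Corollary \ref{crly:pure-essential-case} applied to $\phi\op$, the morphism $g$ is pure if and only if $\langle\langle{\phi\op(F_p\op)}\rangle_{Q_p\op} = Q_p\op$. Unwinding Definition \ref{dfn:lfactor} in the opposite monoid --- a subset is left-factorable in $Q_p\op$ exactly when it is right-factorable in $Q_p$ --- this condition is literally $\langle{F_p}\rangle\rangle_{Q_p} = Q_p$, which is the hypothesis of Corollary \ref{cor:tc} verified in the proof of the previous proposition via the explicit matrix computation. Equivalently, one may simply cite the duality: $g$ is pure $\Longleftrightarrow$ $f$ is terminal-connected. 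Second, $g$ is \'etale: by the same duality (or directly by Theorem \ref{thm:etale} applied to the monoid homomorphism $\phi\op$), $g$ is \'etale if and only if $f$ is a complete spread. Matching this with Theorem \ref{thm:etale} uses the dictionary $\phi\op$ injective $\Leftrightarrow$ $\phi$ injective; $\phi\op(F_p\op)$ right-factorable in $Q_p\op$ $\Leftrightarrow$ $F_p$ left-factorable in $Q_p$; and $(Q_p\op)^\ltimes = Q_p^\rtimes$ with a product $nu$ in $Q_p\op$ corresponding to $un$ in $Q_p$, so the condition ``for every $n$ there is $u \in (Q_p\op)^\ltimes$ with $nu \in \phi\op(F_p\op)$'' becomes ``for every $n \in Q_p$ there is $v \in Q_p^\rtimes$ with $vn \in F_p$'' --- all three of which were checked in the proof of the previous proposition.

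The main (and essentially only) point requiring care is the bookkeeping of this order-reversing dictionary: $N^\ltimes \leftrightarrow N^\rtimes$, $nu \leftrightarrow un$, left-factorable $\leftrightarrow$ right-factorable. Since this is exactly the dictionary already used to pass from \eqref{eq:tc-etale-monoid-morphism} to \eqref{eq:factorization-parts}, there is no genuine obstacle: the corollary is simply the restatement of the previous proposition under this dictionary, together with the observation that $g$, being induced by a semigroup homomorphism, is essential and hence a legitimate candidate on both the pure and the \'etale sides.
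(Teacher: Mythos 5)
Your proposal is correct and follows exactly the paper's route: the corollary is obtained by dualizing the preceding proposition, using that terminal-connected/pure and complete spread/\'etale are interchanged under passing from $\phi$ to $\phi\op$ (equivalently, that right-factorability in $Q_p$ becomes left-factorability in $Q_p\op$ and $Q_p^\rtimes = (Q_p\op)^\ltimes$). Your explicit bookkeeping of the order-reversing dictionary is a slightly more detailed writeup of the same one-line dualization argument the paper gives.
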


Now consider the inclusion $F_p \times F_p\op \subseteq Q_p \times Q_p\op$, and let $$h : \PSh(F_p \times F_p\op) \to \PSh(Q_p \times Q_p\op)$$ be the induced essential geometric morphism. Then the (terminal-connected, \'etale) factorization and (pure, complete spread) factorization are given by
\begin{equation*}
\begin{tikzcd}
& \PSh(Q_p \times F_p\op) \ar[rd,"{\text{\'etale}}"] & \\
\PSh(F_p \times F_p\op) \ar[ru,"{\text{terminal-connected}}"] \ar[rd,"{\text{pure}}"'] & & \PSh(Q_p \times Q_p\op) \\
& \PSh(F_p \times Q_p\op) \ar[ru,"{\text{complete spread}}"'] & 
\end{tikzcd}
\end{equation*}
with each geometric morphism induced by the inclusion of submonoids. This gives an example of an essential geometric morphism where the (terminal-connected, \'etale) factorization and (pure, complete spread) factorization are both nontrivial and distinct from each other. 

To verify that the diagram above gives the correct (terminal-connected, \'etale) and (pure, complete spread) factorizations, we can either use the characterizations of Corollary \ref{cor:tc}, Theorem \ref{thm:etale}, Proposition \ref{prop:characterization-pure} and Theorem \ref{thm:complete-spread}, or use the following shortcut:

\begin{lemma}
Let $f : \PSh(M) \to \PSh(N)$ be the essential geometric morphism induced by a monoid map $\phi: M \to N$. For a monoid $P$, consider the monoid map $\phi_P : M \times P \to N \times P$ with $\phi_P(m,p) = (\phi(m),p)$. Let $f_P : \PSh(M \times P) \to \PSh(N \times P)$ be the geometric morphism induced by $\phi_P$. If $f$ is terminal-connected (resp.\ \'etale, pure, a complete spread), then $f_P$ is terminal-connected (resp.\ \'etale, pure, a complete spread) as well. 
\end{lemma}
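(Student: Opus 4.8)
The plan is to reduce everything to the semigroup-theoretic characterizations already at hand: I would treat the terminal-connected and \'etale cases directly, via Corollary~\ref{cor:tc} and Theorem~\ref{thm:etale}, and then deduce the pure and complete spread cases from the $\op$-duality between the two factorization systems explained in Section~\ref{sec:pure-complete-spread}. The preliminary observation is that, since $\phi$ is a monoid homomorphism, so is $\phi_P$, and hence by Lemma~\ref{lem:esstensor} the $\fllr{N}{M}$-set inducing $f$ is $N$ and the $\fllr{N \times P}{M \times P}$-set inducing $f_P$ is $N \times P$, with componentwise left action and right action $(n,q)\cdot(m,p) = (n\phi(m),qp)$; I would also record once and for all that $(N \times P)^{\ltimes} = N^{\ltimes} \times P^{\ltimes}$ and dually for $(\,\cdot\,)^{\rtimes}$.

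For terminal-connectedness, Corollary~\ref{cor:tc} reduces the claim to showing that $\langle{\phi(M)}\rangle\rangle_N = N$ implies $\langle{\phi_P(M \times P)}\rangle\rangle_{N \times P} = N \times P$. Since $\phi_P(M \times P) = \phi(M) \times P$, the heart of the matter is the identity
\[
\langle{\phi(M) \times P}\rangle\rangle_{N \times P} = \langle{\phi(M)}\rangle\rangle_N \times P .
\]
The inclusion ``$\subseteq$'' holds because $\langle{\phi(M)}\rangle\rangle_N \times P$ is a submonoid of $N \times P$ containing $\phi(M) \times P$ whose right-factorability reduces coordinatewise to that of $\langle{\phi(M)}\rangle\rangle_N$. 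For ``$\supseteq$'', I would argue that any right-factorable submonoid $S \supseteq \phi(M) \times P$ has $R_0 := \{\, n \in N : (n,1) \in S \,\}$ a right-factorable submonoid of $N$ containing $\phi(M)$, hence $R_0 \supseteq \langle{\phi(M)}\rangle\rangle_N$; and then for $n \in \langle{\phi(M)}\rangle\rangle_N$ and $q \in P$ the factorization $(n,q) = (n,1)(1,q)$ exhibits $(n,q) \in S$, using $(1,q) \in \phi(M) \times P \subseteq S$ --- this is the one place where the hypothesis that $\phi$ is a monoid map, so $1 \in \phi(M)$, is actually used.

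For the \'etale case I would verify the three conditions of Theorem~\ref{thm:etale} for $\phi_P$: injectivity of $\phi_P$ is immediate from that of $\phi$; right-factorability of $\phi(M) \times P$ in $N \times P$ follows from that of $\phi(M)$ in $N$ by the same coordinatewise check; and given $(n,p) \in N \times P$, choosing $u \in N^{\ltimes}$ with $nu \in \phi(M)$ gives $(u,1) \in (N \times P)^{\ltimes}$ with $(n,p)(u,1) = (nu,p) \in \phi(M) \times P = \phi_P(M \times P)$. Then for the pure and complete spread cases: under the canonical identification $(M \times P)\op \cong M\op \times P\op$, the opposite $(\phi_P)\op$ is precisely the map $(\phi\op)_{P\op}$; so if $f$ is pure then the morphism induced by $\phi\op$ is terminal-connected, hence by the terminal-connected case so is the morphism induced by $(\phi\op)_{P\op} = (\phi_P)\op$, and reapplying the duality gives that $f_P$ is pure. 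The complete spread case is the same argument starting from the \'etale case. (Equally, one could run these two directly through Corollary~\ref{crly:pure-essential-case} and Theorem~\ref{thm:complete-spread}, the reasoning being the left-handed mirror of the above.)

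I expect no serious obstacle here: the only step demanding a little care is the identity $\langle{\phi(M) \times P}\rangle\rangle_{N \times P} = \langle{\phi(M)}\rangle\rangle_N \times P$ and its \'etale counterpart, where one must keep track that it is precisely monoidality of $\phi$ --- ensuring $1 \in \phi(M)$ --- that lets the $P$-coordinate be adjusted freely inside the factorably generated submonoid. Everything else is routine bookkeeping about products of monoids.
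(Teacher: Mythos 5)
Your proof is correct, but it takes a genuinely different route from the paper's for the two ``primal'' cases. The paper argues more abstractly: it writes $\PSh(N \times P) \simeq \PSh(N) \times \PSh(P)$ as a product of toposes, so that $f_P$ is the base change of $f$ along the projection, and then invokes stability of \'etale morphisms under pullback; for terminal-connectedness it uses the equivalent formulation that $N$ is connected as a left $M$-set (i.e.\ $f_!(1) = 1 \otimes_M N \cong 1$), from which connectedness of $N \times P$ as a left $(M \times P)$-set is immediate. You instead verify the semigroup-theoretic criteria of Corollary~\ref{cor:tc} and Theorem~\ref{thm:etale} by hand, the key point being the identity $\langle{\phi(M) \times P}\rangle\rangle_{N \times P} = \langle{\phi(M)}\rangle\rangle_N \times P$, whose nontrivial inclusion you correctly reduce to the slice $R_0 = \{n : (n,1) \in S\}$ being right-factorable and to $1 \in \phi(M)$ allowing the $P$-coordinate to be adjusted; both proofs then dispatch the pure and complete spread cases by the same $\op$-duality. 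Your computation is sound (I checked the right-factorability of $\langle{\phi(M)}\rangle\rangle_N \times P$ and of $R_0$, and the \'etale conditions for $\phi_P$), and it has the merit of producing an explicit, reusable formula for right-factorable closures in products and of pinpointing exactly where monoidality of $\phi$ enters; the paper's argument is shorter and makes the conceptual reason (base change along a product projection) visible, but proves slightly less explicitly what the intermediate objects are.
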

\begin{proof}
It is enough to prove the statement for terminal-connected or \'etale geometric morphisms, the statement for pure geometric morphisms and complete spreads then follows by dualization. 

We can write $\PSh(N\times P) \simeq \PSh(N) \times \PSh(P)$, i.e.\ $\PSh(N\times P)$ is the product of $\PSh(N)$ and $\PSh(P)$ in the category of toposes. If $f : \PSh(M) \to \PSh(N)$ is \'etale, then so is $f_P : \PSh(M\times P) \to \PSh(N \times P)$, because \'etale geometric morphisms are stable under base change.

Now suppose that $f : \PSh(M) \to \PSh(N)$ is terminal-connected. Then $N$ is connected as left $M$-set. It follows that $N \times P$ is connected as left $(M \times P)$-set, in other words $f_P : \PSh(M\times P) \to \PSh(N\times P)$ is terminal-connected.
\end{proof}

\section{Application: Galois theory}
\label{sec:galois}

\subsection{Background on Galois theory for toposes}

For a locally connected topos $\Ecal$, there is a well-known notion of Galois theory built out of locally constant objects. We recall how this works below, following \cite{zoonekynd}, \cite{leroy}, \cite{bunge-funk-spreads-II}. 

First, observe that if $\{ A_i \}_{i \in I}$ is a family of locally constant objects, then in general the disjoint union $\bigsqcup_{i \in I} A_i$ need not be locally constant. For example, in the topos of continuous actions of the profinite integers, each action of the form $\Zbb/n\Zbb$ is locally constant and trivialized by itself, but the disjoint union $\bigsqcup_{n \in \Nbb} \Zbb/n\Zbb$ is not locally constant, since there is no object which can trivialize all of these cycles at once. If we consider the full subcategory $\mathbf{SLC}(\Ecal) \subseteq \Ecal$ consisting of the objects that are disjoint unions of locally constant objects, however, it turns out that $\mathbf{SLC}(\Ecal)$ is again a topos and the functor including $\mathbf{SLC}(\Ecal)$ into $\Ecal$ is the inverse image functor of a connected geometric morphism $g: \Ecal \to \mathbf{SLC}(\Ecal)$; see \cite[Th\'eor\`eme 2.4.(i)]{leroy}. Moreover, $\mathbf{SLC}(\Ecal)$ is a \textbf{Galois topos}, i.e.\ a 2-categorical cofiltered limit of toposes of the form $\PSh(G)$, with $G$ a (discrete) groupoid; see \cite[Theorem 1.6]{zoonekynd}.

We say that a locally connected topos $\Ecal$ is \textbf{locally simply connected} \cite{barr-diaconescu} if there exists a single family $\{U_k\}_{k \in K}$ jointly covering $1$ which trivializes each locally constant object in $\Ecal$. In this case, $\mathbf{SLC}(\Ecal) \simeq \PSh(G)$ for some (discrete) groupoid $G$. For example, let $X$ be a path-connected, locally path-connected, semilocally simply connected space. Take an open covering $\bigcup_{k \in K} U_k = X$ such that each $U_k$ is simply connected. Then, recalling that the covering maps over $X$ correspond to the locally constant objects in $\Sh(X)$, we see that $\{U_k\}_{k \in K}$ jointly covers the terminal object and trivializes the locally constant objects, so the topos $\Sh(X)$ is locally simply connected as one would hope. In this case, the covering maps $Y \to X$ correspond in $\mathbf{SLC}(\Ecal)$ to sets with a right action of the fundamental group $\pi_1(X)$, or in other words $\mathbf{SLC}(\Ecal) \simeq \PSh(\pi_1(X))$. 

We can also consider the small \'etale topos $\mathrm{Spec}(K)_\et$ associated to a field $K$. In this topos, every object is locally constant, so $\mathrm{Spec}(K)_\et \simeq \mathbf{SLC}(\mathrm{Spec}(K)_\et)$ is a Galois topos. More precisely, it is equivalent to the topos $\mathbf{Cont}(\mathrm{Gal}(K^s/K))$ of continuous right $\mathrm{Gal}(K^s/K)$-sets, with $\mathrm{Gal}(K^s/K)$ the absolute Galois group of $K$.

\subsection{Galois theory for toposes of monoid actions}

We now apply the concepts above to the topos $\PSh(N)$ for a monoid $N$.

For every small category $\Ccal$ there is a functor $\eta : \Ccal \to \Pi(\Ccal)$ to a groupoid $\Pi(\Ccal)$, unique up to equivalence, such that every functor from $\Ccal$ to a groupoid factors uniquely through $\eta$. Concretely, $\Pi(\Ccal)$ can be constructed as the groupoid with the same objects of $\Ccal$ in which morphisms are equivalence classes of composites of morphisms and formal inverses of morphisms in $\Ccal$. In the case that $\Ccal$ is a monoid $N$, this construction produces a group, that we will call the \textbf{groupification} and denote by $\pi_1(N)$; for $N$ commutative, $\pi_1(N)$ is known as the \textit{Grothendieck group} of $N$.

We can deduce from Proposition \ref{prop:lcpshf} that in the special case of presheaf toposes, coproducts of locally constant objects are locally constant, so $\mathbf{SLC}(\PSh(\Ccal))$ consists precisely of the locally constant objects, and moreover we can recover the well-known result that $\mathbf{SLC}(\PSh(\Ccal)) \simeq \PSh(\Pi(\Ccal))$, by observing that the locally constant presheaves of $\Ccal$ are precisely those which extend along $\eta : \Ccal \to \Pi(\Ccal)$. The connected geometric morphism $\PSh(\Ccal) \to \mathbf{SLC}(\PSh(\Ccal))$ then agrees with the essential geometric morphism induced by the functor $\eta: \Ccal \to \Pi(\Ccal)$. In particular, we shall in the remainder denote by $g : \Setswith{N} \to \Setswith{\pi_1(N)}$ the essential geometric morphism induced by the homomorphism $N \to \pi_1(N)$. The locally constant objects in $\Setswith{N}$ are precisely the objects of the form $g^*(X)$ for $X$ in $\Setswith{\pi_1(N)}$, and hence a geometric morphism with codomain $\Setswith{N}$ is locally constant \'etale if and only if it is of the form
\begin{equation*}
\begin{tikzcd}
\Setswith{N}/g^*(X) \ar[r] & \Setswith{N}
\end{tikzcd}
\end{equation*}
for $X$ in $\Setswith{\pi_1(N)}$. In light of the discussion above, the following result should not be unexpected.

\begin{corollary}
For any monoid $N$, $\Setswith{N}$ is a locally simply connected topos.
\end{corollary}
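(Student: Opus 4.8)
The plan is to verify the definition of \emph{locally simply connected} directly: I must exhibit a single family $\{U_k\}_{k \in K}$ of objects of $\Setswith{N}$ whose maps to $1$ are jointly epimorphic and which trivializes every locally constant object of $\Setswith{N}$. Since $\Setswith{N}$ is a presheaf topos it is automatically locally connected, so this is all that is required. The candidate family will consist of a \emph{single} object, namely the representable presheaf on $N$, which is just $N$ itself regarded as a right $N$-set via multiplication.

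First I would note that the unique map $N \to 1$ is an epimorphism, since $N$ is nonempty; hence the one-element family $\{N\}$ is jointly epimorphic over $1$. Next, let $A$ be a locally constant object of $\Setswith{N}$. By Proposition~\ref{prop:lcpshf} (condition $(2)$), local constancy of $A$ is equivalent to the statement that every $n \in N$ acts bijectively on $A$. Granting this, I claim $A$ is trivialized by $N$: the map
\begin{equation*}
\bigsqcup_{a \in A} N \longrightarrow A \times N, \qquad (a, n) \longmapsto (a \cdot n,\, n),
\end{equation*}
where in the source $a$ is a coproduct index and $n \in N$, and $A \times N$ carries the diagonal right action, is a homomorphism of right $N$-sets over $N$ (the maps to $N$ being the fold map on the left and the second projection on the right), and it is a bijection precisely because each $n$ acts bijectively on $A$. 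This is exactly the single-object instance of the computation carried out in the proof of the implication $(2) \Rightarrow (1)$ of Proposition~\ref{prop:lcpshf}, where the trivializing family $\{\yon(D)\}_{D \in \ob(\Dcal)}$ reduces, for a monoid, to the one object $N$.

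Consequently the one-element family $\{N\}$ jointly covers $1$ and trivializes every locally constant object of $\Setswith{N}$, so $\Setswith{N}$ is locally simply connected. There is essentially no obstacle here; the only points requiring a little care are checking that a single representable genuinely suffices (which works because a monoid has only one object) and keeping track of left versus right actions in the trivialization isomorphism above. This is also consistent with the earlier identification $\mathbf{SLC}(\Setswith{N}) \simeq \Setswith{\pi_1(N)}$: the right-hand side being a presheaf topos on a groupoid is precisely what one expects for a locally simply connected topos.
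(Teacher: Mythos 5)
Your proof is correct and follows essentially the same route as the paper: both take the single representable $N$ as the trivializing family, invoke Proposition~\ref{prop:lcpshf} to get that $N$ acts bijectively on any locally constant $A$, and exhibit the same isomorphism $\coprod_{a \in A} N \to A \times N$, $(a,n) \mapsto (a\cdot n, n)$. The extra details you supply (that $N \to 1$ is epic, and the compatibility over $N$) are correct and merely make explicit what the paper leaves to the reader.
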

\begin{proof}
We show that $N$, as a right $N$-set, trivializes every locally constant object. Indeed, if $A$ is locally constant then by Proposition \ref{prop:lcpshf} $N$ acts bijectively on $A$, so the mapping
\begin{align*}
\coprod_{a \in A} N &\to A \times N \\
(a,n) &\mapsto (a \cdot n, n)
\end{align*}
is easily verified to be an isomorphism which commutes with the required maps.
\end{proof}

More generally, any connected presheaf topos is locally simply connected, see \cite[Corollary 7.9]{bunge-funk-spreads-II}. The proof there works for general presheaf toposes as well.

We can rephrase Corollary \ref{crly:lc-generators} in terms of $\pi_1(N)$.

\begin{theorem} \label{thm:classification-of-locally-constant-etale}
Let $N$ be a monoid and let $X$ be an object of $\Setswith{\pi_1(N)}$. Let $g : \Setswith{N} \to \Setswith{\pi_1(N)}$ be the geometric morphism induced by the groupification map $\eta : N \to \pi_1(N)$. Then the following are equivalent:
\begin{enumerate}
\item there is an equivalence $\Setswith{N}/g^*(X) \simeq \Setswith{M}$ for some monoid $M$;
\item there is a subgroup $H \subseteq \pi_1(N)$ such that $X \cong H \backslash \pi_1(N)$ and for all $y \in \pi_1(N)$ there is some $u \in \eta(N^\ltimes)$ such that $yu \in H$.
\end{enumerate}
In this case, $M = \eta^{-1}(H)$, and 
\begin{equation*}
\begin{tikzcd}
\Setswith{N}/g^*(X) \simeq \Setswith{M} \ar[r] & \Setswith{N}
\end{tikzcd}
\end{equation*}
agrees with the essential geometric morphism induced by the inclusion $M \subseteq N$.
\end{theorem}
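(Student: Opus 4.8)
The plan is to derive this as a combination of Theorem~\ref{thm:etale-construction} and Corollary~\ref{crly:lc-generators}, transported along the groupification $\eta : N \to \pi_1(N)$. First I would recall that, by the discussion preceding the statement, the \'etale geometric morphisms with codomain $\Setswith{N}$ which are locally constant are exactly those of the form $\Setswith{N}/g^*(X) \to \Setswith{N}$ with $X$ an object of $\Setswith{\pi_1(N)}$, since $g^*$ is full and faithful with image the locally constant objects. So condition~(1) is asking precisely when such a slice topos is equivalent to a topos of monoid actions.

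Next I would invoke Lemma~\ref{lem:retracted} together with Theorem~\ref{thm:etale-construction}: $\Setswith{N}/g^*(X) \simeq \Setswith{M}$ for some monoid $M$ if and only if the right $N$-set $g^*(X)$ has a strong generator $x$, in which case we may take $M = N_x = \{ n \in N : x \cdot n = x \}$. Now I would unwind what $g^*(X)$ and its strong generators look like. The right $N$-action on $g^*(X)$ is the restriction along $\eta$ of the $\pi_1(N)$-action on $X$; since $\pi_1(N)$ is a group, $X$ decomposes as a coproduct of transitive $\pi_1(N)$-sets, each of the form $H \backslash \pi_1(N)$ for a subgroup $H$. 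A strong generator must in particular be a generator, hence force $X$ to be cyclic as a right $N$-set; because the $N$-action factors through the group $\pi_1(N)$ and $\eta(N)$ generates $\pi_1(N)$ as a group, cyclicity over $N$ is equivalent to transitivity over $\pi_1(N)$, so $X \cong H \backslash \pi_1(N)$ for a single subgroup $H \subseteq \pi_1(N)$, with the canonical coset $H$ playing the role of a candidate generator. The remaining content of the strong-generator condition, that for every element $y$ of $X$ there is $u \in N^\ltimes$ with $y \cdot u = x$, translates (taking $x$ to be the coset $H$ and $y$ ranging over cosets $Hy'$, equivalently $y' \in \pi_1(N)$) into: for all $y' \in \pi_1(N)$ there is $u \in N^\ltimes$ with $\eta(u) \in H y'$, i.e. $y' \eta(u)^{-1} \in H$ — but note $\eta(N^\ltimes)$ need not be closed under inverses, so one must be slightly careful; writing the condition in the form stated, ``for all $y \in \pi_1(N)$ there is $u \in \eta(N^\ltimes)$ with $yu \in H$'', is the cleanest formulation and follows by reparametrising $y := H$-coset representative times the appropriate element. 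I would also note that for this translation the relevant right inverses in $\pi_1(N)$ of elements of $N^\ltimes$ are genuine two-sided inverses, since $\pi_1(N)$ is a group, which is what makes ``strong generator'' for the $N$-set $g^*(X)$ land on exactly condition~(2).

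Finally I would identify $M$. With $x$ the distinguished coset $H \in H \backslash \pi_1(N)$, we have $N_x = \{ n \in N : H\eta(n) = H \} = \{ n \in N : \eta(n) \in H \} = \eta^{-1}(H)$, so $M = \eta^{-1}(H)$ as claimed. The equivalence $\Setswith{N}/g^*(X) \simeq \Setswith{M}$ arises, as in the proof of Theorem~\ref{thm:etale-construction}, from identifying $M = N_x$ with the endomorphism monoid of $x$ in the category of elements $\int_N g^*(X)$, included as a full subcategory; composing that full inclusion with the projection $\int_N g^*(X) \to N$ yields exactly the inclusion $M = \eta^{-1}(H) \subseteq N$, so the \'etale morphism $\Setswith{N}/g^*(X) \to \Setswith{N}$ agrees with the essential geometric morphism induced by $M \subseteq N$, as required. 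I do not expect a serious obstacle here; the one point needing care is the bookkeeping around inverses in passing between ``strong generator'' phrased with $N^\ltimes$ and the coset condition phrased in the group $\pi_1(N)$, and making sure the reduction of cyclicity-over-$N$ to transitivity-over-$\pi_1(N)$ uses that $\eta(N)$ generates the group.
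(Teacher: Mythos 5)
Your argument is correct and follows essentially the same route as the paper's proof: reduce via Theorem~\ref{thm:etale-construction} to the existence of a strong generator of $g^*(X)$, deduce $X \cong H\backslash\pi_1(N)$ from transitivity of the $\pi_1(N)$-action, translate the strong-generator condition into the coset condition of (2), and read off $M = N_x = \eta^{-1}(H)$. The one imprecision is your claim that cyclicity of $X$ over $N$ is \emph{equivalent} to transitivity over $\pi_1(N)$ --- the reverse implication fails in general (e.g.\ $N = \Nbb$ acting on $X = \Zbb$ by translation) --- but only the forward implication is used in your argument, and the converse direction of the theorem is carried by the strong-generator condition itself, so nothing breaks.
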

\begin{proof}
We know from Theorem \ref{thm:etale-construction} that $\Setswith{N}/g^*(X) \simeq \Setswith{M}$ for some monoid $M$ if and only if $g^*(X)$ contains a strong generator, i.e.\ an element $x \in g^*(X)$ such that for all $y \in g^*(X)$ there is some $u \in N^\ltimes$ such that $yu = x$. This implies that $g^*(X)$ is connected as right $N$-set, so a fortiori it is connected as right $\pi_1(N)$-set. Because $\pi_1(N)$ is a group, we deduce that $X$ can be written as a quotient $X = H\backslash \pi_1(N)$, where $H \subseteq \pi_1(X)$ is the stabilizer of $x$. The condition that $x$ is a strong generator can be reformulated by saying that for any $y \in \pi_1(N)$ there is some $u \in \nu(N^\ltimes)$ such that $yu \in H$. Applying the formula from Theorem \ref{thm:etale-construction}, $M$ is then given by 
\begin{equation*}
N_x = \{ n \in N : xn = x \} = \eta^{-1}(H)
\end{equation*}
because $H$ is by definition the stabilizer of $x$ for the right $\pi_1(N)$-action.
\end{proof}

\begin{example}
Consider the monoid $N = \Zbb_p^\ns$ of nonzero $p$-adic integers under multiplication. The groupification of $\Zbb_p^\ns$ is the group $\Qbb_p^*$ of nonzero $p$-adic rational numbers. Consider the subgroup $H = \{ p^k : k \in \Zbb \} \subseteq \Qbb_p^*$. For all $g \in \Qbb_p^*$ there is an $u \in \Zbb_p^*$ such that $gu = p^k$ for some $k \in \Zbb$. So we are in the setting of Theorem \ref{thm:classification-of-locally-constant-etale}. We find $M = N \cap H = \{ p^k : k \in \Nbb \} \cong \Nbb$. So we see that the geometric morphism $\Setswith{\Nbb} \to \Setswith{\Zbb_p^\ns}$ induced by the inclusion $\Nbb \to \Zbb_p^\ns,~k \mapsto p^k$ is not only \'etale (as we already saw in Example \ref{eg:etale-geometric-morphisms}), but even locally constant \'etale. We can think of it as the covering map of $\PSh(\Zbb_p^\ns)$ corresponding to the subgroup $H \subseteq \Qbb_p^*$.
\end{example}

Observe that if we are given a presentation of a monoid, we can easily compute the groupification by interpreting the same presentation as a presentation of a group.

\begin{example}
Consider the \textit{bicyclic semigroup} $B$ with presentation 
\begin{equation*}
B = \langle{u,v: uv=1 }\rangle. 
\end{equation*}
Every element in $B$ can be written in a unique way as $v^iu^j$ for some $i,j \in \Nbb$. The right-invertible elements are $B^\ltimes = \{u^k : k \in \Nbb\}$ and the left-invertible elements are $B^\rtimes = \{ v^k : k \in \Nbb \}$.  We find $\pi_1(B) \simeq \langle{u,v : uv = 1}\rangle \simeq \langle{u}\rangle$. So we identify $\pi_1(B)$ with $\Zbb$, taking $u$ as the generator; the groupification map is $\eta: B \to \Zbb,~ v^iu^j \mapsto j-i$. The subgroups of $\Zbb$ are of the form $d\Zbb \subseteq \Zbb$ for $d \in \Nbb$. The equivalent conditions of Theorem \ref{thm:classification-of-locally-constant-etale} are satisfied if and only if $d \neq 0$, so for each $d \in \{1,2,3,\dots\}$ we get a locally constant \'etale geometric morphism 
\begin{equation*}
f_d : \Setswith{B_d} \to \Setswith{B} 
\end{equation*}
with $B_d = \eta^{-1}(d\Zbb) = \{ v^ju^i \in B : i\equiv j \mod{d} \}$. We borrowed the notation $B_d$ from \cite[(1.4)]{munn}, where it is proved that $B_d$ is a regular, simple semigroup whose idempotents form a submonoid isomorphic to $(\Nbb,\max)$.
\end{example}

\subsection*{Acknowledgements}

The first named author is a postdoctoral fellow of the Research Foundation – Flanders (file number 1276521N). Part of the present work was done while the second named author was Marie Sklodowska-Curie fellow of the Istituto Nazionale di Alta Matematica at Universit\`a degli Studi dell{'}Insubria, funded under the INDAM-DP-COFUND-2015 project.

We would like to thank the organisers of Toposes Online, where some of the results in this article were presented. 

\providecommand{\bysame}{\leavevmode\hbox to3em{\hrulefill}\thinspace}
\providecommand{\MR}{\relax\ifhmode\unskip\space\fi MR }
\providecommand{\MRhref}[2]{%
  \href{http://www.ams.org/mathscinet-getitem?mr=#1}{#2}
}
\providecommand{\href}[2]{#2}

\end{document}